\newcommand{\bn}{\boldsymbol n}
\newcommand{\bg}{\boldsymbol g}
\newcommand{\bv}{\boldsymbol v}
\newcommand{\bnu}{\boldsymbol\nu}
\newcommand{\bu}{\boldsymbol u}
\newcommand{\bw}{\boldsymbol w}
\newcommand{\be}{\boldsymbol e}
\newcommand{\bft}{\boldsymbol t}
\newcommand{\ff}{\boldsymbol f}
\newcommand{\bfzeta}{\boldsymbol \zeta}
\newcommand{\srf}{\bfzeta}
\newcommand{\bsigma}{\boldsymbol{\sigma}}
\numberwithin{equation}{section}
\newtheorem{lem}{Lemma}[section]
\newtheorem{prop}{Proposition}[section]
\newtheorem{rem}{Remark}[section]
\newenvironment{proof}{\noindent \newline {\bf Proof.}}
{\hfill \mbox{\fbox{} } \newline}
\date{}
\begin{document}
\title{Application of a minimal compatible element to incompressible and nearly incompressible continuum mechanics}
\author[$\star$]{Erik Burman}
\author[$\dagger$]{Snorre Christiansen}
\author[$\ddagger$]{Peter Hansbo}
\affil[$\star$]{\footnotesize\it  Department of Mathematics, University College London, London, UK--WC1E  6BT, United Kingdom}
\affil[$\dagger$]{\footnotesize\it  Department of Mathematics,
University of Oslo,
P.O. Box 1053 Blindern,
NO-0316 Oslo, Norway}
\affil[$\ddagger$]{\footnotesize\it Department of Mechanical Engineering, J\"onk\"oping University, SE-55111 J\"onk\"oping, Sweden}
\maketitle
\begin{abstract}
In this note we will explore some applications of the recently
constructed piecewise affine, $H^1$-conforming element that fits in a discrete de Rham
complex ({\emph{Christiansen and Hu, Generalized finite element systems for smooth differential forms and Stokes' problem. Numer. Math. 140 (2018)}}). In particular we show how the element leads to locking free
methods for incompressible elasticity and viscosity robust methods for
the Brinkman model.
\end{abstract}
\section{Introduction}
It is well known that standard finite element methods are not in general
well-suited for the approximation of nearly incompressible elasticity
or incompressible flow problems. Indeed, in particular low order
approximation spaces often suffer from locking in the incompressible
limit \cite{BF91}. They may typically also exhibit instability when Darcy
flow is considered if the element was designed for Stokes' problem \cite{MaTaWi02}.
These problems can be alleviated using stabilization \cite{BH07,BH05}, but such
stabilizing terms, although weakly consistent to the right order, may
upset local conservation of e.g. mass, momentum, and introduce an additional
layer of complexity to the computational method and its analysis. Recently some new
results on $H^1$-conforming piecewise polynomial approximation spaces
compatible with the de
Rham complex have been published \cite{GN14a,GN14b,JLMNR17,GS18,FGN18, CH18}. Such elements are interesting, since
they provide a simple tool for the robust approximation of models in
mechanics where a divergence constraint is present. Herein we will
focus on the piecewise affine element derived in the last
reference. The advantage of this approach is that it offers a simple low
order locking free element in arbitrary space dimensions. Observe
that for the Scott-Vogelius element the polynomial order of the spaces
typically depends on the number of dimensions \cite{FGN18}. We discuss how this element can be implemented in
engineering practice and show the basic, robust, error estimates that
may be obtained for linear elasticity and incompressible flow. In this
paper we will consider two different models, linear elasticity and the
Brinkman model for porous media flow. The idea is to show the locking
free property of the element on the elasticity model and then
illustrate how the element seamlessly can change between the Stokes'
equations modelling free flow and Darcy's equations modelling porous
media flow, while remaining $H^1$-conforming. The two models are
introduced in section \ref{sec:models}. The construction of the
element is discussed in section \ref{sec:FEMconst} and the finite
element discretizations of the model problems and their analysis are
the topics of section \ref{sec:disc} and \ref{sec:estimate}. In section \ref{sec:nitsche}
we discuss how boundary conditions may be imposed weakly using
Nitsche's method, without sacrifying the good properties of the
element. Finally section \ref{sec:numerics} gives some numerical illustrations to
the theory.
\section{Model problems: linear elasticity and the Brinkman model}\label{sec:models}
We will consider two model problems with solutions in $V := [H^1(\Omega)]^d$, initially assuming
homogeneous Dirichlet boundary conditions.
Let $\Omega \subset \mathbb{R}^d$, $d=2,3$ denote a convex polyhedral domain with
boundary $\partial \Omega$. The first model problem is linear
elasticity. Here we wish to find $\bu \in V^0$, where $V^0:=V \cap [H^1_0(\Omega)]^d$, such that 
\begin{equation}\label{eq:elasticity}
-\nabla \cdot \bsigma(\bu) = \ff , \mbox{ in } \Omega,
\end{equation}
where $\bsigma(\bu) = 2 \mu \nabla^s \bu + \lambda \mathbb{I} \nabla
\cdot \bu$, with $\nabla^s$ the symmetric part of the gradient tensor,
$\mathbb{I}$ the identity matrix 
and $\mu,\, \lambda >0$ the Lam\'e coefficients and $\ff \in [L^2(\Omega)]^d$. This system can be written on weak form: find $\bu \in
V^0$
such that
\[
a_E(\bu,\bv) = l(\bv),\mbox{ for all } \bv \in V^0,
\]
where
\begin{equation}\label{eq:bilin_elasticity}
a_E(\bw,\bv):= \int_\Omega \bsigma(\bu) : \nabla^s \bv ~\mbox{d}x,
\end{equation}
where the tensor product is defined by $A:B := \sum_{i,j=1}^d a_{ij}
b_{ij}$ and 
\begin{equation}\label{eq:rhs}
l(\bv) := \int_\Omega \ff \cdot \bv ~\mbox{d}x.
\end{equation}
It is well-known that the problem \eqref{eq:elasticity} admits a
unique weak solution in the space $V^0$ through
application of Lax-Milgram's lemma, and that the following regularity
holds \cite{Gris11},
\begin{equation}\label{eq:elasticity_regularity}
\|\bu\|_{H^2(\Omega)} + \lambda \|\nabla \cdot
\bu\|_{H^1(\Omega)} \leq C_R \|\ff\|_\Omega \mbox{ for } \mu \in
[\mu_1,\mu_2] \mbox{ and } \lambda \in (0,\infty).
\end{equation}

The second model problem is the Brinkman problem where we look for a
velocity-pressure couple $(\bu,p) \in V^0 \times
Q$, where $Q:=L^2_0(\Omega)$ denotes the set of square
integrable functions with mean zero, such that
\begin{equation}\label{eq:brinkman}
\begin{array}{rcl}
-\mu \Delta \bu + \sigma \bu + \nabla p &=& \ff \mbox{ in } \Omega \\
\nabla \cdot \bu & = & g  \mbox{ in } \Omega.
\end{array}
\end{equation}
Here $\ff \in [L^2(\Omega)]^d$, $g \in L^2_0(\Omega)$, $\mu>0$ is the viscosity coefficient and $\sigma$ a possibly
space dependent coefficient
modelling friction due to the porous medium. Observe that if $\mu=0$
we recover the Darcy model for porous media flow and if $\sigma = 0$
we obtain the classical Stokes' system for creeping incompressible
flow.

The corresponding weak formulation reads: find $(\bu,p) \in V^0\times
Q$ such that:
\[
A_B[(\bu,p),(\bv,q)] = l(\bv),\mbox{ for all } (\bv,q) \in V^0\times
Q.
\]
Here the bilinear forms are given by
\begin{equation}\label{eq:bilin_Brinkman}
A_B[(\bu,p),(\bv,q)]:=a_B(\bu,\bv) - b(p,\bv) + b(q,\bu)
\end{equation}
with
\[
a_B(\bw,\bv) := \int_\Omega \mu \nabla \bw : \nabla \bv + \sigma \bw
\cdot \bv ~\mbox{d}x,
\]
\[
b(q,\bv):= \int_\Omega q \nabla \cdot \bv ~\mbox{d}x
\]
and
\begin{equation}\label{eq:rhsB}
l_B(\bv,q) := \int_\Omega \ff \cdot \bv ~\mbox{d}x +  \int_\Omega g q ~\mbox{d}x.
\end{equation}
By the surjectivity of the divergence operator we may write $\bu =
\bu_0 + \bu_g$ where $\nabla \cdot \bu_g = g$.
Unique existence of  the $\bu_0$ part of the solution is ensured through the application of the
Lax-Milgram lemma in the space  $
H^{div}_0$, where
\[
H^{div}_0:= \{\bv \in V: \nabla \cdot \bv = 0\}.
\]
A unique pressure is then guaranteed by the
Ladyzhenskaya-Babuska-Brezzi condition \cite{BF91}.
\section{The finite element space}\label{sec:FEMconst}
Let $\mathcal{T}_h$ denote a conforming, shape regular tesselation of
$\Omega$ into simplices $T$. We denote the set of faces of the
simplices in $\mathcal{T}$ by $\mathcal{F}$ and the subset of faces that lie on the boundary $\partial \Omega$ by $\mathcal{F}_b$.
We let $X_h$ denote the space of functions in $L^2(\Omega)$ that are
constant on each element,
\[
X_h := \{x \in L^2(\Omega) : x\vert_T \in \mathbb{P}_0(T); \forall T
\in \mathcal{T}_h \}.
\]
The $L^2$-projection on $X_h$, $\pi_0:L^2(\Omega) \mapsto X_h$ is
defined by $(\pi_0 v, x_h)_\Omega = (v,x_h)_{\Omega}$ for all $x_h \in
X_h$. $\pi_0$ satisfies the stability $\|\pi_0 v\|_\Omega \leq
\|v\|_\Omega$ for all $v \in L^2(\Omega)$ and the approximation error estimate
\[
\|\pi_0 v - v\|_\Omega \leq C h |v|_{H^1(\Omega)}, \quad \forall v
\in H^1(\Omega).
\]
We also introduce the $L^2$-projection of the trace of a function
$$\tilde \pi_0 :L^2(\partial \Omega) \mapsto \partial X_h$$
where 
$$
\partial X_h := \{x \in L^2(\partial \Omega) : x\vert_F \in \mathbb{P}_0(F); \forall F
\in \mathcal{F}_b \}
$$
where $\mathcal{F}_b$ is the set of faces in $\mathcal{T}_h$ such that
$F =F \cap \partial \Omega$. We let $W_h$ denote the space of
vectorial piecewise affine functions on $\mathcal{T}_h$,
\[
W_h := \{v \in [H^1(\Omega)]^d: v\vert_T \in  [\mathbb{P}_1(T)]^d; \forall T
\in \mathcal{T}_h \}
\]
and define $Q_h := X_h \cap Q$. It is well known that the space $W_h$
is not robust for nearly incompressible elasticity and that
velocity-pressure space $W_h \times Q_h$ is unstable for
incompressible flow problems. 
To rectify this we will enrich the space with vectorial bubbles on the
faces, following the design in \cite{CH18}, that allows us to remain
conforming in $H^1$, resulting in an extended space, that we will denote $V_h$. The
detailed construction of this space is the topic of the next section.
We then apply $V_h$ in the finite element method for the system of compressible elasticity
and $V_h \times Q_h$ for the Brinkman system. For
the space with built in homogeneous Dirichlet boundary conditions we
write $V_h^0 := V_h \cap [H^1_0(\Omega)]^d$. Observe that by
construction all functions $\bv_h \in V_h^0$ satisfy $\nabla \cdot
\bv_h \in X_h$.

\subsection{Construction of the finite element space $V_h$}
The finite element space is constructed by decomposing every
simplex in subelements. On these subelements face bubbles are
constructed, similar to the face bubbles used in the Bernardi-Raugel
element \cite{BR85}, but in this case they are constructed using piecewise affine
elements. Using the subgrid degrees of freedom similar degrees of
freedoms as in the Bernardi-Raugel element are designed as well. The
upshot here is that the piecewise affine basis functions are designed
so that the divergence restricted to each simplex in the original tesselation is constant. The
pressure space then consists of one constant pressure degree of
freedom per (macro) simplex, allowing for exact imposition of the
divergence free condition. Although the numerical examples in this
work are restricted to the two-dimensional case we below for
completeness also give a detailed description of the construction in
three space dimensions. 

We first treat the 2D case for which our numerical examples are
implemented and then describe how this extends to the three
dimensional case.
Consider a triangular element $T$ twice subdivided. We call the triangle $T$ type I, the first subdivision type II, and the second subdivision type III, cf. Fig \ref{fig:triangles}. The first subdivision is created by joining the centroid of triangle I with its corner nodes. The second subdivision
splits each triangle II by the line joining the centroid of triangle I with the centroid of is neighbouring type I triangle sharing the edge to be split. On the boundary we have a free choice of how to split the edge; we here choose
to split the edge along the line in the direction of the normal to the boundary. 
 On triangles of type I the approximation is piecewise linear with two velocity degrees of freedom in each corner node.
On triangles of type III we add a hierarchical ``bubble'' approximation in the following way. To the node $i$ on the exterior edge $E$ of each triangle of type I is assigned a unit vector $\bnu_i$ along the line $L$ of the split into type III triangles, see Figure \ref{fig:bubbledef}. The unknown in the corresponding edge node $i$ is the vector $a_i\bnu_i$ where $a_i$ is a hierarchical scalar unknown. The centroid--to--centroid nature of the split then ensures continuity of the discrete solution.
In the centroid node the bubble has two velocity components $(u_{xm},u_{ym})$ determined {\em a priori} by setting the divergence $d$ equal (with $a_i=1$) on the triangles sharing node $i$ and  the triangles of type II not
being split by $L$. The divergence is set  by 
\[
d:=\int_E \bnu_i\cdot \bn_E \, ds .
\]
The hierarchical bubble is then piecewise linear on these type II triangles and the type III triangles sharing node $i$. Thus, each edge on triangle I has its own unique hierarchical bubble and the total approximation is the sum of the linear function on type I and the three (vector-valued) bubbles.

A closed form for the velocities defining the bubble associated with an edge can be computed beforehand. With 
the location of the corner, center, and edge nodes according to Fig. \ref{fig:bubbledef}, 
with $A$ the area of triangle $T$, we find
\begin{equation}
{\boldsymbol u}_m=  D({\boldsymbol x}_m - {\boldsymbol x}_o)\\
\end{equation}
where
\[
D := \frac{x_r(y_m - y_l) + x_m(y_l - y_r) + x_l(y_r-y_m)}{2 A \vert{\boldsymbol x}_i-{\boldsymbol x}_m\vert}.
\]
This gives equal divergence $d$ on all subtriangles.

\subsection{The construction of $V_h$ in three space dimensions}
The construction in 3D is analogous to the one in 2D: any given tetrahedron $T$ is decomposed using the Worsey Farin (WF)
split \cite{WF87}, defined as follows. An inpoint is chosen for the tetrahedron, typically (but not necessarily) the center of the inscribed sphere. As inpoint on the (triangular) faces, one chooses (crucially) the point on the line joining the inpoints on the two neighboring tetrahedra. The faces are then split in three subfaces by joining the inpoint to its vertices. The tetrahedron is split in 12 small tetrahedra, three for each face, based on a subface and with summit at the inpoint of the tetrahedron.

The finite element space on the tetrahedron can then be described as the  the space $K(T)$ of continuous $P^1$ vectorfields on the WF split which are divergence free, to which one adds one vector field with constant divergence on $T$, namely ${\boldsymbol x} \mapsto {\boldsymbol x}$. As shown in \cite{CH18} this space has dimension 16. It contains the $P^1$ vectorfields on $T$ (dimension 12), and four bubbles attached to faces (dimension 4). As degrees of freedom one may use vertex values and integrals of normal components on faces.

A face bubble can be defined explicitely for a face $F$, as
follows. We let $\bnu_F$ be the normalized vector parallel to the line
joining the inpoints of the two neighboring tetrahedra of $F$. The
vectorfield on $T$ has value $0$ at vertices of $T$, $\bnu_F$ at the
inpoint of the face $F$, and $0$ at inpoints of the other faces. At the
inpoint of $T$ we determine the vector by the condition that the
divergence of the vector field is the same on all the small tetrahedra
of the WF split and satisfies Stokes' theorem on the three that are
based on $F$. 

\subsection{The Fortin interpolant}

For every $\bu \in V^0$ there exists $\pi_h \bu \in V_h^0$ such that
$\pi_h \bu(x_i) =i_h \bu(x_i)$ in the vertices $x_i$ of type I simplices, where $i_h$ denotes the Cl\'ement
interpolant, and for all $F \in \mathcal{F}$
\[
\int_F \pi_h \bu \cdot \bn_F ~\mbox{d}s = \int_F \bu \cdot \bn_F ~\mbox{d}s.
\]
Note that the interpolant $\pi_h \bu$ satisfies the approximation error estimate
\begin{equation}\label{eq:approx}
\|\pi_h \bu - \bu\|_\Omega
\leq C_1 h |\bu|_{H^1(\Omega)}, \quad h \|\nabla(\pi_h \bu - \bu)\|_\Omega +  \|\pi_h \bu - \bu\|_\Omega
\leq C_2 h^2 |\bu|_{H^2(\Omega)}.
\end{equation}
The proof of the existence of $\pi_h$ is identical to that of the
interpolant for the Bernardi-Raugel element \cite{BR85}. Note that for
functions $\bv \in V$ such that $\bv \cdot \bn = 0$ there holds that
$\tilde \pi_0 (\pi_h \bv)\vert_{\partial \Omega} = 0$.

It follows from this construction that for all $q_h \in Q_h$ and for
all $T \in\mathcal{T}_h$, using the divergence theorem we have
\[
\int_{T}\nabla \cdot \pi_h \bu q_h ~\mbox{d}x = \int_{\partial T}
(\pi_h \bu
\cdot \bn_{\partial T}) q_h ~\mbox{d}s = \int_{\partial T} (\bu
\cdot \bn_{\partial T}) q_h ~\mbox{d}s = \int_{T} \nabla \cdot \bu q_h ~\mbox{d}x=
\int_{T} \pi_0 \nabla \cdot \bu q_h ~\mbox{d}x.
\]
A consequence of the existence of the Fortin interpolant is the
existence of a non-trivial subspace $V_{div}(\bv) \subset V_h$ such
that
\[
V_{div}(\bv) := \{\bv_h \in V_h: \nabla \cdot  \bv_h = \pi_0 \nabla \cdot \bv \}.
\]
As a consequence, for every $q_h \in Q_h$ there exists 
\begin{equation}\label{eq:vp_func}
\srf_q \in
V_h^0 \mbox{ such that }\nabla \cdot \srf_q = q_h \mbox{ and }
\|\srf_q\|_{H^1(\Omega)} \leq C_0 \|q_h\|_\Omega.
\end{equation} 
To see the note that by the surjectivity
of the divergence operator from $V$ to $Q$ for every $q_h \in Q_h$ there
exists $\srf_q \in V$ such that $\nabla \cdot \srf_q = q_h$ and
$\|\srf_q\|_{H^1(\Omega)} \leq C \|q_h\|_\Omega$ and if we
now consider $\pi_h \srf_q \in V_h^0$ we see that $\nabla \cdot \pi_h
\srf_q = \pi_0 \nabla \cdot \srf_q =q_h$ and we conclude that $\srf_q$
may be chosen in $V_h^0$ directly.
\section{Finite element discretization of the model problems}\label{sec:disc}
We consider the finite element spaces $V_h,\,Q_h$ that were defined in the previous section. The
finite element discretization of the problem \eqref{eq:elasticity}
then takes the form: find $\bu_h \in V^0_h$ such that
\begin{equation}\label{eq:FEM_elast}
a_E(\bu_h,\bv_h) = l(\bv_h), \mbox{ for all } \bv_h \in V^0_h,
\end{equation}
where $a_E(\cdot,\cdot)$ and $l(\cdot)$ are defined by
\eqref{eq:bilin_elasticity} and \eqref{eq:rhs}.
The finite element method for the problem \eqref{eq:brinkman} on the
other hand takes the form find $(\bu_h,p_h) \in V_h^0 \times Q_h$ such
that
\begin{equation}\label{eq:FEM_brinkman}
A_B[(\bu_h,p_h),(\bv_h,q_h)] = l_B(\bv_h,q_h),\mbox{ for all } (\bv_h,q_h) \in V_h^0\times
Q_h.
\end{equation}
Both the problem \eqref{eq:FEM_elast} and \eqref{eq:FEM_brinkman}
admit a unique solution by the same arguments as for the continuous
problem. This is also a consequence of the stability estimates that we
derive in the next section.
\section{Stability and error analysis}\label{sec:estimate}
We introduce two triple norms. First for the elasticity system,
\begin{equation}\label{eq:trip_elast}
||| \bv_h |||^2_E := 2 \|\mu^{\frac12} \nabla^s \bv_h\|^2_\Omega + \|\lambda^{\frac12}
\nabla \cdot \bv_h\|^2_\Omega.
\end{equation}
Observe that by Korn's inequality and Poincar\'e's inequality the $E$-seminorm is a norm
on $H^1_0(\Omega)$. Then for the incompressible model we have the
triple norm,
\begin{equation}\label{eq:trip_elast_inc}
||| \bv_h,y_h |||^2_B := \|\mu^{\frac12}  \nabla \bv_h\|^2_\Omega + \|\sigma^{\frac12}
\bv_h\|^2_{\Omega} + \|\nabla \cdot \bv_h\|_\Omega^2 + \|(\mu+\sigma)^{-\frac12}
y_h \|^2_\Omega.
\end{equation}
For the problem \eqref{eq:FEM_elast} Korn's inequality leads to
the
coercivity, there exists $\alpha_E >0$ such that for all $\bv_h \in V_h^0$
\begin{equation}\label{eq:elast_coerciv}
\alpha_E ||| \bv_h |||^2_E \leq a_E(\bv_h,\bv_h).
\end{equation}
For the problem \eqref{eq:FEM_brinkman} we need to prove an inf-sup
condition for stability. 
\begin{prop}(inf-sup stability for the Brinkman problem)\label{prop:infsup}
There exists $\alpha_B$ such that for all $(\bv_h,y_h) \in V_h^0 \times Q_h$ there holds
\[
\alpha_B |||\bv_h, y_h|||_B \leq \sup_{\bw_h,q_h \in (V_h^0\setminus 0)
  \times (Q_h \setminus 0)} \frac{A_B[(\bv_h, y_h),(\bw_h,q_h)]}{|||\bw_h,q_h|||_B}.
\]
\end{prop}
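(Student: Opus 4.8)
The plan is to follow the classical construction for saddle-point problems: for an arbitrary pair $(\bv_h,y_h)\in V_h^0\times Q_h$ I will exhibit an explicit, bounded test pair $(\bw_h,q_h)$ that realises the supremum up to a constant, i.e. $|||\bw_h,q_h|||_B\le C\,|||\bv_h,y_h|||_B$ and $A_B[(\bv_h,y_h),(\bw_h,q_h)]\ge c\,|||\bv_h,y_h|||^2_B$. The pair will be a small perturbation of the argument itself, $\bw_h=\bv_h+\delta\,\srf$ and $q_h=y_h+\eta\,\nabla\cdot\bv_h$, where $\delta,\eta>0$ are chosen small at the end and $\srf\in V_h^0$ is produced by the Fortin-based right inverse \eqref{eq:vp_func}.

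Two structural facts motivate the choice. First, testing $A_B$ with $(\bv_h,y_h)$ itself makes the two $b$-terms cancel and leaves $a_B(\bv_h,\bv_h)=\|\mu^{\frac12}\nabla\bv_h\|^2_\Omega+\|\sigma^{\frac12}\bv_h\|^2_\Omega$, the velocity part of the triple norm (so $a_B$ is, trivially, coercive on the discrete divergence-free kernel). Second, since $\bv_h\in V_h^0$ we have $\nabla\cdot\bv_h\in X_h$ by construction, and the divergence theorem together with the homogeneous boundary condition gives $\int_\Omega\nabla\cdot\bv_h=0$; hence $\nabla\cdot\bv_h\in Q_h$, so $q_h\in Q_h$ is admissible and $b(\eta\,\nabla\cdot\bv_h,\bv_h)=\eta\,\|\nabla\cdot\bv_h\|^2_\Omega$ supplies the third term of $|||\cdot,\cdot|||_B$. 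For the pressure term I take $\srf$ with $\nabla\cdot\srf=-(\mu+\sigma)^{-1}y_h$ and $\|\srf\|_{H^1(\Omega)}\le C_0\,\|(\mu+\sigma)^{-1}y_h\|_\Omega$ — legitimate via \eqref{eq:vp_func} once $(\mu+\sigma)^{-1}y_h\in Q_h$, e.g. $\mu+\sigma$ piecewise constant on $\mathcal{T}_h$, after subtracting its mean if necessary — so that $-\delta\,b(y_h,\srf)=\delta\,\|(\mu+\sigma)^{-\frac12}y_h\|^2_\Omega$; the weight is chosen precisely so that $\|\mu^{\frac12}\nabla\srf\|^2_\Omega+\|\sigma^{\frac12}\srf\|^2_\Omega\le C\,\|(\mu+\sigma)^{-\frac12}y_h\|^2_\Omega$, using the pointwise estimates $\mu(\mu+\sigma)^{-2}\le(\mu+\sigma)^{-1}$, $\sigma(\mu+\sigma)^{-2}\le(\mu+\sigma)^{-1}$ and Poincar\'e's inequality.

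Collecting the terms gives
\[
A_B[(\bv_h,y_h),(\bw_h,q_h)]=a_B(\bv_h,\bv_h)+\delta\,a_B(\bv_h,\srf)+\delta\,\|(\mu+\sigma)^{-\frac12}y_h\|^2_\Omega+\eta\,\|\nabla\cdot\bv_h\|^2_\Omega,
\]
and the only indefinite contribution $\delta\,a_B(\bv_h,\srf)$ is absorbed by Cauchy--Schwarz and Young, $|\delta\,a_B(\bv_h,\srf)|\le\tfrac12 a_B(\bv_h,\bv_h)+C\delta^2\|(\mu+\sigma)^{-\frac12}y_h\|^2_\Omega$; fixing $\delta$ small so that $\delta-C\delta^2\ge\delta/2$ yields $A_B\ge\tfrac12 a_B(\bv_h,\bv_h)+\tfrac{\delta}{2}\|(\mu+\sigma)^{-\frac12}y_h\|^2_\Omega+\eta\,\|\nabla\cdot\bv_h\|^2_\Omega\gtrsim|||\bv_h,y_h|||^2_B$. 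It then remains to check $|||\bw_h,q_h|||_B\lesssim|||\bv_h,y_h|||_B$: for $\bw_h$ this follows from the triangle inequality and the stability bounds on $\srf$ above, and for $q_h$ from $\|(\mu+\sigma)^{-\frac12}\nabla\cdot\bv_h\|_\Omega\lesssim\|\nabla\cdot\bv_h\|_\Omega$ together with $\|(\mu+\sigma)^{-\frac12}y_h\|_\Omega$, using that $\mu+\sigma$ is bounded below. Dividing through produces the asserted constant $\alpha_B$.

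I expect the genuine difficulty to be \emph{parameter robustness}: keeping $\alpha_B$ independent of $\mu$ and $\sigma$ across the whole Stokes--Darcy range. This is exactly why $\nabla\cdot\srf$ must carry the weight $(\mu+\sigma)^{-1}$ and why one needs the $H^1$-stable, divergence-surjective right inverse \eqref{eq:vp_func} — available only thanks to the face-bubble enrichment of $V_h$ described in Section~\ref{sec:FEMconst} — and it is also what forces the mild technical hypotheses one should record (e.g. $\mu+\sigma$ bounded away from zero, and $\mu,\sigma$ piecewise constant so that $(\mu+\sigma)^{-1}y_h$ can be fed into the Fortin interpolant). Everything else is routine Cauchy--Schwarz/Young bookkeeping.
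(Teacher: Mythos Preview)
Your argument is correct and essentially coincides with the paper's proof: both take $\bw_h=\bv_h+\text{(small)}\cdot(\mu+\sigma)^{-1}\text{-weighted Fortin right inverse of }y_h$ together with $q_h=y_h+\nabla\cdot\bv_h$, then absorb the single cross term $a_B(\bv_h,\srf)$ by Cauchy--Schwarz/Young and check the boundedness of the test pair. The only cosmetic difference is that the paper applies the scalar weight $(\mu+\sigma)^{-1}$ \emph{outside} the right inverse (writing $\bw_h=(\mu+\sigma)^{-1}\srf_y$ with $\nabla\cdot\srf_y=y_h$), whereas you push it \emph{inside} (taking $\nabla\cdot\srf=-(\mu+\sigma)^{-1}y_h$); for the constant-coefficient setting the paper tacitly assumes these are identical, and you are right to flag the extra hypotheses (piecewise constant $\mu+\sigma$, lower bound on $\mu+\sigma$) needed in the variable case.
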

\begin{proof}
First we take $\bw_h = \bv_h$ and $q_h = y_h$ to obtain
\[
\|\mu^{\frac12}  \nabla \bv_h\|^2_\Omega + \|\sigma^{\frac12}
\bv_h\|^2_{\Omega} = A_B[(\bv_h, y_h),(\bw_h,q_h)].
\]
Then we chose $\bw_h = (\mu+\sigma)^{-1} \srf_y$, where $\srf_y$ is
defined by \eqref{eq:vp_func} so that
\[
 (\mu+\sigma)^{-1} \|y_h\|_{\Omega}^2 = A_B[(\bv_h, y_h),(\bw_h,0)] -
 (\mu \nabla \bv_h,\bw_h)_{\Omega}-(\sigma \bv_h,\bw_h)_{\Omega}.
\]
Observing now that
\[
 (\mu \nabla \bv_h,\nabla \bw_h)_{\Omega} \leq \|\mu^{\frac12} \nabla \bv_h\|_\Omega
 \mu^{\frac12} (\mu+\sigma)^{-1} C_0 \|y_h\|_\Omega
\]
and
\[
 (\sigma \bv_h,\bw_h)_{\Omega} \leq \|\sigma^{\frac12} \bv_h\|_\Omega
 \sigma^{\frac12} (\mu+\sigma)^{-1} C_0 \|y_h\|_\Omega
\]
it follows that
\[
 \frac12 (\mu+\sigma)^{-1} \|y_h\|_{\Omega}^2 \leq A_B[(\bv_h, y_h),(\bw_h,0)] -C_0^2(\|\mu^{\frac12} \nabla \bv_h\|_\Omega^2+\|\sigma^{\frac12} \bv_h\|_\Omega^2).
\]
Taking $\bw_h = \bv_h+(2 C_0)^{-1} (\mu+\sigma)^{-1} \srf_y$ and
$q_h=y_h + \nabla \cdot \bv_h$ we conclude that
\begin{multline*}
\min\left(\frac12, \frac{1}{2C_0}\right) ||| \bv_h,y_h|||^2_B \leq \frac12 \|\mu^{\frac12}  \nabla \bv_h\|^2_\Omega + \frac12 \|\sigma^{-\frac12}
\bv_h\|^2_{\Omega} + \frac{1}{2C_0} (\mu+\sigma)^{-1}
\|y_h\|_{\Omega}^2 \\
\leq A_B[(\bv_h, y_h),(\bw_h,q_h)] 
\end{multline*}
To finish the proof note that
\begin{multline*}
||| \bw_h,q_h|||_B \leq  ||| \bv_h,y_h|||_B +  ||| (2 C_0)^{-1}
(\mu+\sigma)^{-1} \srf_y,0|||_B \\
\leq  ||| \bv_h,y_h|||_B +   (2
C_0)^{-1}\mu^{\frac12}(\mu+\sigma)^{-1} C_0 \|y_h\|_\Omega + \|\nabla
\cdot \bv_h\|_\Omega\leq C  ||| \bv_h,y_h|||_B.
\end{multline*}
\end{proof}
Using the stability estimates we may now prove error estimates for the
approximations of \eqref{eq:FEM_elast} and \eqref{eq:FEM_brinkman}.
\begin{prop}
Let $\bu$ be the solution of \eqref{eq:elasticity} and $\bu_h$ the solution
of \eqref{eq:FEM_elast} then
\[
\|\mu^{\frac12} \nabla (\bu - \bu_h)\|_\Omega + \|\lambda^{\frac12}
\left( \pi_0 \nabla \cdot \bu - \nabla \cdot \bu_h \right)\|_\Omega \leq C \inf_{\bv_h \in V_{div}(\bu)} \|\mu^{\frac12} \nabla
(\bu - \bv_h)\|_\Omega
\]
and
\[
\|\mu^{\frac12} \nabla (\bu - \bu_h)\|_\Omega +
\|\lambda^{\frac12}\left(\nabla \cdot \bu - \nabla \cdot
  \bu_h\right)\|_\Omega  \leq C h (
\mu^{\frac12}\|\bu\|_{H^2(\Omega)} + \lambda^{\frac12} \|\nabla \cdot \bu\|_{H^1(\Omega)} )\leq C_E h \|\ff\|_\Omega.
\]
where $C_E$ is independent of $\lambda$.
\end{prop}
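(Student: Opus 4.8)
The plan is a standard Céa-type argument exploiting the divergence-compatibility of the element, in two stages: first the $\lambda$-robust quasi-optimality in the constrained subspace $V_{div}(\bu)$, then conversion into an $h$-rate estimate using the regularity \eqref{eq:elasticity_regularity} and the Fortin interpolant.

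First I would establish the first inequality. Let $\bv_h \in V_{div}(\bu)$ be arbitrary, so $\nabla\cdot\bv_h = \pi_0\nabla\cdot\bu$. Write $\be_h := \bu_h - \bv_h \in V_h^0$. Using coercivity \eqref{eq:elast_coerciv}, Galerkin orthogonality $a_E(\bu-\bu_h,\bw_h)=0$ for all $\bw_h\in V_h^0$, and then splitting $\bu - \bv_h = (\bu-\bu_h) + \be_h$, one gets $\alpha_E |||\be_h|||_E^2 \le a_E(\bv_h - \bu_h,\be_h) = a_E(\bu - \bu_h, \be_h) + a_E(\bv_h - \bu, \be_h) = a_E(\bv_h-\bu,\be_h)$. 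The key point is that the $\lambda$-term in $a_E(\bv_h-\bu,\be_h)$ involves $\int_\Omega \lambda\, (\nabla\cdot(\bv_h-\bu))(\nabla\cdot\be_h)\,\mathrm dx$, and since $\nabla\cdot\be_h \in X_h$ while $\nabla\cdot(\bv_h-\bu) = \pi_0\nabla\cdot\bu - \nabla\cdot\bu$ is $L^2$-orthogonal to $X_h$ by definition of $\pi_0$, this term vanishes. Hence $a_E(\bv_h-\bu,\be_h) = 2\int_\Omega \mu\,\nabla^s(\bv_h-\bu):\nabla^s\be_h\,\mathrm dx \le \|\mu^{1/2}\nabla^s(\bv_h-\bu)\|_\Omega \cdot 2^{1/2}|||\be_h|||_E$ after Cauchy–Schwarz. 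Dividing through by $|||\be_h|||_E$ and using $|||\be_h|||_E \ge \|\mu^{1/2}\nabla^s\be_h\|_\Omega$ gives control of $\be_h$; a triangle inequality $|||\bu_h-\bv_h|||_E$-to-$\nabla(\bu-\bu_h)$ step, together with $\|\nabla^s\cdot\|\lesssim\|\nabla\cdot\|$ and Korn to go back, yields the left-hand side bounded by $C\|\mu^{1/2}\nabla^s(\bu-\bv_h)\|_\Omega \le C\|\mu^{1/2}\nabla(\bu-\bv_h)\|_\Omega$. The $\lambda$-term on the left, $\|\lambda^{1/2}(\pi_0\nabla\cdot\bu - \nabla\cdot\bu_h)\|_\Omega = \|\lambda^{1/2}\nabla\cdot(\bv_h-\bu_h)\|_\Omega = \|\lambda^{1/2}\nabla\cdot\be_h\|_\Omega \le |||\be_h|||_E$, is likewise controlled. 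Taking the infimum over $\bv_h\in V_{div}(\bu)$ finishes the first estimate.

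Next I would derive the second inequality. Apply the first estimate with the specific choice $\bv_h = \pi_h\bu$, the Fortin interpolant: by the computation displayed just before Section~\ref{sec:disc}, $\nabla\cdot\pi_h\bu = \pi_0\nabla\cdot\bu$, so $\pi_h\bu \in V_{div}(\bu)$ is admissible. Then $\|\mu^{1/2}\nabla(\bu-\bv_h)\|_\Omega \le \mu^{1/2}\|\nabla(\bu-\pi_h\bu)\|_\Omega \le C\mu^{1/2}h\,|\bu|_{H^2(\Omega)}$ by \eqref{eq:approx}. This handles the first term on the left of the second estimate and, via the first estimate, also $\|\lambda^{1/2}(\pi_0\nabla\cdot\bu-\nabla\cdot\bu_h)\|_\Omega$. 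It remains to replace $\pi_0\nabla\cdot\bu$ by $\nabla\cdot\bu$ inside the $\lambda$-term: by the triangle inequality $\|\lambda^{1/2}(\nabla\cdot\bu-\nabla\cdot\bu_h)\|_\Omega \le \|\lambda^{1/2}(\nabla\cdot\bu - \pi_0\nabla\cdot\bu)\|_\Omega + \|\lambda^{1/2}(\pi_0\nabla\cdot\bu-\nabla\cdot\bu_h)\|_\Omega$, and the first summand is $\le \lambda^{1/2}Ch\,|\nabla\cdot\bu|_{H^1(\Omega)}$ by the $\pi_0$-approximation estimate stated in Section~\ref{sec:FEMconst}. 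Collecting terms gives the middle bound $Ch(\mu^{1/2}\|\bu\|_{H^2(\Omega)} + \lambda^{1/2}\|\nabla\cdot\bu\|_{H^1(\Omega)})$, and the final bound $C_E h\|\ff\|_\Omega$ with $C_E$ independent of $\lambda$ follows immediately from the elliptic regularity \eqref{eq:elasticity_regularity}, since $\mu\in[\mu_1,\mu_2]$ makes $\mu^{1/2}$ a bounded factor and $\lambda^{1/2}\|\nabla\cdot\bu\|_{H^1(\Omega)} \le \lambda\|\nabla\cdot\bu\|_{H^1(\Omega)}/\lambda^{1/2}$ is not quite what is needed — rather one uses directly that $\lambda^{1/2}\|\nabla\cdot\bu\|_{H^1} \le (\lambda\|\nabla\cdot\bu\|_{H^1})^{1/2}\|\nabla\cdot\bu\|_{H^1}^{1/2}$; cleaner is to note \eqref{eq:elasticity_regularity} bounds $\lambda\|\nabla\cdot\bu\|_{H^1}\le C_R\|\ff\|_\Omega$ and separately $\|\bu\|_{H^2}\le C_R\|\ff\|_\Omega$, so if $\lambda\le 1$ then $\lambda^{1/2}\le 1$ and the term is trivially bounded, while if $\lambda\ge 1$ then $\lambda^{1/2}\|\nabla\cdot\bu\|_{H^1}\le \lambda\|\nabla\cdot\bu\|_{H^1}\le C_R\|\ff\|_\Omega$; either way $C_E$ is $\lambda$-uniform.

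The main obstacle is the cancellation of the $\lambda$-term in the orthogonality step: it is essential that $\nabla\cdot\bv_h\in X_h$ for every $\bv_h\in V_h^0$ (noted at the end of Section~\ref{sec:FEMconst}) so that $\nabla\cdot\be_h\in X_h$ and is therefore orthogonal to the $\pi_0$-error $\nabla\cdot(\bu-\pi_0\nabla\cdot\bu$-preimage$)$; without the compatibility of the element this term would scale like $\lambda$ and destroy robustness. A secondary technical point is being careful with Korn's inequality constants when passing between $\|\nabla^s\cdot\|_\Omega$ and $\|\nabla\cdot\|_\Omega$ on $V_h^0\subset[H^1_0(\Omega)]^d$, but this is uniform in $h$ and $\lambda$ and presents no real difficulty.
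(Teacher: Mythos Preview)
Your proposal is correct and follows essentially the same approach as the paper: a constrained C\'ea argument using $\bv_h\in V_{div}(\bu)$, with the crucial cancellation of the $\lambda$-term via the orthogonality $(\lambda(\nabla\cdot\bu-\pi_0\nabla\cdot\bu),\nabla\cdot\be_h)_\Omega=0$ (since $\nabla\cdot\be_h\in X_h$), followed by the choice $\bv_h=\pi_h\bu$ and the regularity estimate \eqref{eq:elasticity_regularity}. The only slip is a harmless sign in writing $a_E(\bv_h-\bu_h,\be_h)$ where $a_E(\bu_h-\bv_h,\be_h)=a_E(\be_h,\be_h)$ is meant; the subsequent Cauchy--Schwarz step absorbs it, and your case split $\lambda\le1$ versus $\lambda\ge1$ for the $\lambda$-independence of $C_E$ is equivalent to the paper's $\max$ argument.
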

\begin{proof}
Let $\be_h := \bu_h - \bv_h$, with $\bv_h \in  V_{div}(\bu)$. Note that by adding and subtracting
$\bw_h$ and using the triangle inequality and Korn's inequality we have
\[
\|\mu^{\frac12} \nabla (\bu - \bu_h)\|_\Omega + \|\lambda^{\frac12}\left(
\pi_0 \nabla \cdot \bu - \nabla \cdot \bu_h\right)\|_\Omega \leq \|\mu^{\frac12} \nabla
(\bu - \bv_h)\|_\Omega + |||\be_h|||_E.
\]
For the second term we apply the coercivity \eqref{eq:elast_coerciv},
followed by Galerkin orthogonality 
\[
a_E(\bu - \bu_h,\bw_h) = 0 \mbox{ for all } \bw_h \in V_h^0
\]
to obtain
\[
\alpha_E |||\be_h|||^2_E \leq a_E(\be_h,\be_h) = a_E(\bu-\bv_h,\be_h).
\]
Noting that 
\begin{equation}\label{eq:cont_div}
(\lambda \nabla \cdot (\bu- \bv_h),\nabla \cdot
\be_h)_\Omega = (\lambda (\nabla \cdot \bu- \pi_0\nabla \cdot  \bu),\nabla \cdot
\be_h)_\Omega = 0
\end{equation}
we may write
\begin{equation}\label{eq:cont_eps}
\alpha_E |||\be_h|||^2_E \leq (2 \mu \nabla^s (\bu- \bv_h),
\nabla^s \be_h)_\Omega \leq 2 \|\mu^{\frac12} \nabla (\bu-
\bv_h)\|_\Omega |||\be_h|||_E,
\end{equation}
which proves the first claim.

The second claim is immediate, taking $\bv_h = \pi_h \bu$ and using the approximation properties of
$\pi_h$, \eqref{eq:approx} and the regularity bound
\eqref{eq:elasticity_regularity}. To show that the constant $C_E$is
independent of $\lambda$ observe that $\lambda^{\frac12} \|\nabla \cdot
\bu\|_{H^1(\Omega)} \leq \max( c \mu^{\frac12}|\bu|_{H^2(\Omega)},
\lambda \|\nabla \cdot
\bu\|_{H^1(\Omega)} )$.
\end{proof}
\begin{prop}\label{prop:apriori_brink}
Let $(\bu,p) \in V \times Q$ be the solution to \eqref{eq:brinkman}, with $\mu>0$, $\sigma \ge 0$
and $(\bu_h,p_h)$ the solution to \eqref{eq:FEM_brinkman}. Then there
holds
\[
|||\bu-\bu_h,\pi_0 p-p_h|||_B \leq C \inf_{\bv_h \in V_{div}^g}  |||\bu-\bv_h,0|||_B
\]
where $V_{div}^g := \{\bv \in V_h^0: \nabla \cdot \bv = \pi_0 g \}$
and
\[
|||\bu-\bu_h,\pi_0 p-p_h|||_B \leq C (h \mu^{\frac12}
|u|_{H^2(\Omega)}+ \min (C_1 h 
\sigma^{\frac12} |u|_{H^1(\Omega)}, C_2 h^2
\sigma^{\frac12} |u|_{H^2(\Omega)}).
\]
\end{prop}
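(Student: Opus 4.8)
The plan is to follow the standard Céa-type argument for mixed problems, exploiting the inf-sup condition of Proposition \ref{prop:infsup} together with Galerkin orthogonality. First I would establish the Galerkin orthogonality relation $A_B[(\bu-\bu_h,p-p_h),(\bw_h,q_h)] = 0$ for all $(\bw_h,q_h) \in V_h^0 \times Q_h$; since the pressure only enters through the form $b(\cdot,\cdot)$ and $\nabla \cdot \bw_h \in X_h$ for $\bw_h \in V_h^0$, we have $b(p - \pi_0 p, \bw_h) = (p - \pi_0 p, \nabla \cdot \bw_h)_\Omega = 0$, so the orthogonality may be rewritten with $\pi_0 p$ in place of $p$: $A_B[(\bu-\bu_h,\pi_0 p-p_h),(\bw_h,q_h)] = A_B[(\bu - \bu_h, \pi_0 p - p), (\bw_h, q_h)]$ and the right-hand side only sees the term $-b(\pi_0 p - p, \bw_h) = 0$. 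Hence $(\bu - \bu_h, \pi_0 p - p_h)$ is Galerkin-orthogonal in the discrete sense.

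Next I would pick an arbitrary $\bv_h \in V_{div}^g$, write $\be_h := \bu_h - \bv_h$ and $\varepsilon_h := p_h - \pi_0 p$, and split $|||\bu - \bu_h, \pi_0 p - p_h|||_B \leq |||\bu - \bv_h, 0|||_B + |||\be_h, \varepsilon_h|||_B$ by the triangle inequality (noting that the pressure component of the first term is zero by the way the norm is arranged). For the discrete error $|||\be_h, \varepsilon_h|||_B$ I apply Proposition \ref{prop:infsup}: there exist test functions $(\bw_h, q_h)$ with $|||\bw_h, q_h|||_B \leq 1$ such that $\alpha_B |||\be_h, \varepsilon_h|||_B \leq A_B[(\be_h, \varepsilon_h),(\bw_h,q_h)]$. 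Using Galerkin orthogonality this equals $A_B[(\bu - \bv_h, \pi_0 p - p_h - \varepsilon_h + \cdots),(\bw_h, q_h)]$; more cleanly, $A_B[(\be_h,\varepsilon_h),(\bw_h,q_h)] = A_B[(\bu - \bv_h, \pi_0 p - p),(\bw_h,q_h)]$, and the key point is that $\nabla \cdot (\bu - \bv_h) = \nabla \cdot \bu - \pi_0 g = g - \pi_0 g$ is $L^2$-orthogonal to $Q_h \ni q_h$, so the term $b(q_h, \bu - \bv_h)$ vanishes, and $b(\pi_0 p - p, \bw_h) = 0$ as above. What remains is only $a_B(\bu - \bv_h, \bw_h)$, which is bounded by $|||\bu - \bv_h, 0|||_B \cdot |||\bw_h, q_h|||_B \leq |||\bu - \bv_h, 0|||_B$ via Cauchy--Schwarz on each of the $\mu$- and $\sigma$-weighted pieces. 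Combining gives the first (quasi-optimality) estimate.

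For the second, concrete-rate estimate I would choose $\bv_h = \pi_h \bu$, which lies in $V_{div}^g$ because the Fortin property gives $\nabla \cdot \pi_h \bu = \pi_0 \nabla \cdot \bu = \pi_0 g$, and then simply insert the approximation bounds \eqref{eq:approx}. The viscous piece $\|\mu^{\frac12} \nabla(\bu - \pi_h \bu)\|_\Omega$ is $\leq C h \mu^{\frac12} |\bu|_{H^2(\Omega)}$ from the $H^2$ estimate in \eqref{eq:approx}; the friction piece $\|\sigma^{\frac12}(\bu - \pi_h\bu)\|_\Omega$ can be bounded either by $C_1 h \sigma^{\frac12}|\bu|_{H^1(\Omega)}$ using the $H^1$ estimate in \eqref{eq:approx} or by $C_2 h^2 \sigma^{\frac12}|\bu|_{H^2(\Omega)}$ using the $L^2$-in-$H^2$ estimate, whichever is smaller — hence the $\min$; and the divergence piece $\|\nabla \cdot(\bu - \pi_h\bu)\|_\Omega = \|g - \pi_0 g\|_\Omega$ is absorbed (or is zero if $g \in X_h$, and in general contributes at order $h|g|_{H^1}$, which is of the stated form). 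I would note that the pressure-norm piece $\|(\mu+\sigma)^{-\frac12}(\pi_0 p - p)\|_B$ does not appear on the right because it is already controlled through the left-hand side of the quasi-optimality estimate and the right-hand side of that estimate has no pressure contribution.

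The main obstacle I anticipate is the careful bookkeeping around which pressure is being compared — ensuring that replacing $p$ by $\pi_0 p$ is legitimate at every step, which hinges precisely on the built-in property $\nabla \cdot \bv_h \in X_h$ for $\bv_h \in V_h^0$ (so that test functions cannot "see" the part of $p$ orthogonal to $X_h$) and on the Fortin-commuting identity $\nabla \cdot \pi_h = \pi_0 \nabla \cdot$. A secondary subtlety is confirming that $V_{div}^g$ is nonempty, i.e.\ that some discrete field has divergence exactly $\pi_0 g$; this follows from the surjectivity argument at the end of Section \ref{sec:FEMconst} applied to a lift of $g$, combined with the Fortin interpolant. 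Once these structural facts are in hand, the rest is the routine inf-sup/Céa machinery.
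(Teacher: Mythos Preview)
Your approach is essentially the same as the paper's: split via the triangle inequality with an arbitrary $\bv_h \in V_{div}^g$, invoke the inf-sup bound of Proposition~\ref{prop:infsup} on the discrete part, use Galerkin orthogonality to reduce to $A_B[(\bu-\bv_h,\,p-\pi_0 p),(\bw_h,q_h)]$, observe that both $b$-terms vanish (one by $\nabla\cdot\bw_h\in X_h$, the other by $\nabla\cdot(\bu-\bv_h)=g-\pi_0 g\perp Q_h$), and bound the surviving $a_B$-term by Cauchy--Schwarz; then set $\bv_h=\pi_h\bu$ and appeal to~\eqref{eq:approx}. One small slip: with your sign convention $\varepsilon_h=p_h-\pi_0 p$, Galerkin orthogonality gives $A_B[(\be_h,\varepsilon_h),\cdot]=A_B[(\bu-\bv_h,\,p-\pi_0 p),\cdot]$, not $\pi_0 p-p$ as you wrote, though of course this is immaterial once you note the $b$-term vanishes either way.
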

\begin{proof}
We introduce, as before, discrete errors $\be_h := \bu_h - 
\bv_h$, with $\bv_h \in V_{div}^g$ and $\eta_h = \pi_0 p - p_h$. Using the triangle inequality we
see that
\[
|||\bu-\bu_h,0|||_B \leq |||\bu-\bv_h,0|||_B + |||\be_h,\eta_h|||_B.
\]
For the second term in the right hand side we apply the stability of
Proposition \ref{prop:infsup} to obtain
\[
|||\be_h,\eta_h|||_B \leq \sup_{\bw_h,q_h \in (V_h^0\setminus 0)
  \times (Q_h \setminus 0)} \frac{A_B[(\be_h, \eta_h),(\bw_h,q_h)]}{|||\bw_h,q_h|||_B}.
\]
using Galerkin orthogonality
\[
\]
we have
\begin{equation}\label{eq:Agalortho}
A_B[(\be_h, \eta_h),(\bw_h,q_h)] = A_B[(\bu-\bv_h,p-\pi_0 p),(\bw_h,q_h)].
\end{equation}
Observe that by construction we have
\[
b(q_h,\bu- \bv_h) = 0 \mbox{ and } b(p - \pi_0 p,\bw_h) = 0.
\]
The only remaining term in the right hand side of \eqref{eq:Agalortho}
is bounded using the Cauchy--Schwarz inequality,
\[
a_B(\bu-\pi_h \bu,\bw_h) \leq |||\bu-\bv_h,0|||_B |||\bw_h,q_h|||_B.
\]
This proves the first claim and the second follows as before taking
$\bv_h = \pi_h \bu \in V_{div}^g$ and using the
approximation properties of the Fortin interpolant $\pi_h$ \eqref{eq:approx}.
\end{proof}
Since we have imposed the boundary conditions strongly above we can
not take $\mu=0$ in the Brinkman model corresponding to the case of
the Darcy equations. In order to make this limit feasible we will now
discuss weak imposition of boundary conditions using Nitsche's method.
\section{Weakly imposed boundary conditions, Nitsche's method}\label{sec:nitsche}
Here we will discuss how to impose non-penetration conditions on the
space $V_h$ as one wishes to do in the case of zero-traction boundary
conditions in elasticity and how to relax the no-slip condition when
$\mu \rightarrow 0$ for the Brinkman model. Therefore we here
propose Nitsche methods for the imposition of boundary conditions that
preserve the locking free character for elasticity and are robust in
the limit of pure porous media flow for the Brinkman model. 
\subsection{Zero traction conditions for linear elasticity}
Consider first the elasticity problem \eqref{eq:elasticity}, with the
boundary decomposed in $\partial \Omega:=\overline{\partial \Omega_D}
\cup \overline{\partial \Omega_N}$ where $\partial \Omega_D$ and
$\partial \Omega_N$ each consists of a set of entire polyhedral
faces. We assume that 
\begin{equation}\label{eq:zero_traction_bc}
\bft \bu =\bg_D \mbox{ on } \partial \Omega_D \mbox{ and }\bu
\cdot \bn = g_N \mbox{ on } \partial \Omega \mbox{ and } \bft (\sigma(\bu)
\bn) = 0 \mbox{ on }\partial \Omega_N.
\end{equation}
Here the tangential projection is defined by $\bft := \mathbb{I}- \bn \otimes \bn$.
The Nitsche formulation then
takes the form: Find $\bu_h \in V_h$ such that
\begin{equation}\label{eq:FEM_elasticity}
A_{E,h}(\bu_h,\bv_h) = L(\bv_h) 
\end{equation}
with
\[
A_{E,h}(\bu_h,\bv_h):=a_E(\bu_h,\bv_h) - c(\bu_h,\bv_h)-c(\bv_h,\bu_h)+s(\bu_h,\bv_h)
\]
and
\[
 L(\bv_h) = l(\bv_h) + l_c(\bv_h)
\]
where
\[
c(\bu_h,\bv_h):= (\bn \cdot(\bsigma(\bu_h) \bn), \bv_h \cdot \bn)_{\partial \Omega}
+ (\bft \cdot(\bsigma(\bu_h) \bn), \bft \bv_h)_{\partial \Omega_D}
\]
\[
s(\bu_h,\bv_h) :=
(\gamma /h (\mu  + \lambda  \tilde \pi_0) ~\bu_h \cdot \bn,  \bv_h \cdot \bn)_{\partial \Omega} +(\gamma \mu/h ~\bft\bu_h, \bft\bv_h)_{\partial
  \Omega_D}
\]
and
\[
l_c(\bv_h) = (g_N, \gamma /h (\mu+\lambda \tilde \pi_0 )  ~\bv_h \cdot \bn  -  \bn
\cdot(\bsigma(\bv_h) \bn) )_{\partial \Omega} + (\bg_T, \gamma \mu/h~
\bft\bv_h -\bft
\cdot(\bsigma(\bv_h) \bn))_{\partial \Omega_D}.
\]
Observe that the projection $\tilde \pi_0$ in the boundary penalty of the
normal component is necessary to
avoid locking. 

We define the stabilization semi-norm by
\[
|\bv_h|_s := s(\bv_h,\bv_h)^{\frac12}
\]
and the following augmented energy norm defined on $H^1(\Omega)$
\[
|||\bv_h|||_{E,h}^2 := |||\bv_h|||_{E}^2 + |\bv_h|^2_s.
\]
We recall that $|||\cdot |||_{E,h}$ is a norm by Korn's inequality and
Poincar\'e's inequality.
We recall the trace inequalities
\begin{equation}\label{eq:trace1}
\|\bv\|_{\partial T} \leq C_T(h^{-\frac12} \|\bv\|_{T} + h^{\frac12}
\|\nabla \bv \|_{T}) \quad \forall T
\mbox{ and } \bv \in H^1(T)
\end{equation}
and
\begin{equation}\label{eq:trace2}
\|\bv_h\|_{\partial T} \leq C_T h^{-\frac12} \|\bv_h\|_{T} \quad \forall T
\mbox{ and } \bv_h \in V_h.
\end{equation}
Using these inequalities it is straightforward to prove the following
approximation estimate in the norm $|||\cdot|||_{E,h}$ and a
bound on the form $c$.
\begin{lem}\label{lem:approx_elast}
The following approximation inequality holds
\begin{equation}\label{eq:approx_elast}
|||\bu - \pi_h \bu |||_{E,h} \leq C h (\mu^{\frac12} |\bu|_{H^2(\Omega)}
+ \lambda^{\frac12} |\nabla \cdot \bu|_{H^1(\Omega)}).
\end{equation}
\end{lem}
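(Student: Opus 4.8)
The plan is to bound separately the two pieces of $|||\bu - \pi_h\bu|||_{E,h}^2 = |||\bu-\pi_h\bu|||_{E}^2 + |\bu-\pi_h\bu|_s^2$, reducing everything to the interpolation estimate \eqref{eq:approx}, the $\pi_0$-approximation bound of Section~\ref{sec:FEMconst}, and the trace inequality \eqref{eq:trace1}. For the interior energy seminorm, the symmetric-gradient contribution is controlled by $2\|\mu^{\frac12}\nabla^s(\bu-\pi_h\bu)\|_\Omega^2 \leq 2\mu\|\nabla(\bu-\pi_h\bu)\|_\Omega^2 \leq C\mu h^2|\bu|_{H^2(\Omega)}^2$, directly from \eqref{eq:approx}. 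For the dilatation contribution I would use the identity $\nabla\cdot\pi_h\bu = \pi_0\nabla\cdot\bu$, established in Section~\ref{sec:FEMconst} from the divergence theorem together with the flux-matching property of $\pi_h$; then $\nabla\cdot(\bu-\pi_h\bu) = \nabla\cdot\bu - \pi_0\nabla\cdot\bu$, and the $\pi_0$-approximation estimate yields $\|\lambda^{\frac12}\nabla\cdot(\bu-\pi_h\bu)\|_\Omega \leq C\lambda^{\frac12} h|\nabla\cdot\bu|_{H^1(\Omega)}$.

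The step requiring care is the stabilization seminorm $|\bu-\pi_h\bu|_s^2 = s(\bu-\pi_h\bu,\bu-\pi_h\bu)$. Expanding $s$, the part weighted by $\lambda\tilde\pi_0$ equals, using that $\tilde\pi_0$ is the $L^2(\partial\Omega)$-orthogonal projection onto $\partial X_h$, a positive multiple of $\|\tilde\pi_0((\bu-\pi_h\bu)\cdot\bn)\|_{\partial\Omega}^2$. On each boundary face $F$, this projected quantity is the mean of $(\bu-\pi_h\bu)\cdot\bn_F$ over $F$, which vanishes by the defining property $\int_F\pi_h\bu\cdot\bn_F\,\mbox{d}s = \int_F\bu\cdot\bn_F\,\mbox{d}s$ of the Fortin interpolant. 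Hence the whole $\lambda$-weighted boundary term is zero — which is exactly why the projection $\tilde\pi_0$ was inserted in the normal penalty — and, since the tangential projection is norm non-increasing, $|\bu-\pi_h\bu|_s^2 \leq C\gamma\mu h^{-1}\|\bu-\pi_h\bu\|_{\partial\Omega}^2$.

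It then remains to estimate $\|\bu-\pi_h\bu\|_{\partial\Omega}^2$ by summing \eqref{eq:trace1} over the elements meeting $\partial\Omega$, giving $\|\bu-\pi_h\bu\|_{\partial\Omega}^2 \leq C(h^{-1}\|\bu-\pi_h\bu\|_\Omega^2 + h\|\nabla(\bu-\pi_h\bu)\|_\Omega^2) \leq Ch^3|\bu|_{H^2(\Omega)}^2$ by \eqref{eq:approx}, so that $|\bu-\pi_h\bu|_s \leq C\mu^{\frac12}h|\bu|_{H^2(\Omega)}$. Collecting the three bounds and using $\sqrt{a^2+b^2}\leq a+b$ gives \eqref{eq:approx_elast}. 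I expect the only genuinely non-routine point to be the vanishing of the $\tilde\pi_0$-weighted boundary contribution: without it one picks up a term of order $\lambda^{\frac12}h|\bu|_{H^2(\Omega)}$, which by the elasticity regularity \eqref{eq:elasticity_regularity} need not be small as $\lambda\to\infty$ and would reintroduce locking through the boundary; everything else is a routine combination of the interpolation, projection, and trace estimates already recorded.
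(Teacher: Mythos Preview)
Your proof is correct and follows essentially the same route as the paper's: split $|||\cdot|||_{E,h}$ into bulk and boundary parts, handle the bulk via the commuting property $\nabla\cdot\pi_h\bu=\pi_0\nabla\cdot\bu$ together with \eqref{eq:approx}, and for the boundary observe that the $\lambda\tilde\pi_0$-weighted normal term vanishes by the face flux-matching property of $\pi_h$, leaving only $\mu$-weighted boundary contributions controlled by \eqref{eq:trace1} and \eqref{eq:approx}. The paper makes the same point that the vanishing of the $\tilde\pi_0$ term is the key to locking-freeness.
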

\begin{proof}
The inequality 
\[
|||\bu - \pi_h \bu |||_{E} \leq C h (\mu^{\frac12} |\bu|_{H^2(\Omega)}
+ \lambda^{\frac12} |\nabla \cdot \bu|_{H^1(\Omega)}).
\]
is immediate by the commuting
property and approximation 
properties of the Fortin interpolant.
Considering the stabilization part we see that using \eqref{eq:trace1} on
each boundary face followed by the approximation (\ref{eq:approx}),
\[
(\mu/h )^{\frac12} \|(\bu - \pi_h \bu)\cdot \bn\|_{\partial \Omega} \leq C h \mu^{\frac12} |\bu|_{H^2(\Omega)}.
\]
Using the definition of $\pi_h$ we see that $\tilde \pi_0 \pi_h \bu =
\tilde \pi_0 \bu$ and therefore
\[
(\lambda/h )^{\frac12} \|\tilde \pi_0 (\bu - \pi_h \bu)\cdot
\bn\|_{\partial \Omega} = 0.
\]
This last property is necessary to prove that the method is locking free.
\end{proof}
\begin{equation}\label{eq:approx_Brink}
\end{equation}
\begin{lem}\label{eq:nit_trace}
For $\epsilon>0$ there holds
\begin{equation}
c(\bu_h,\bu_h) \leq \epsilon|||\bu_h|||^2_E + \epsilon^{-1} C_T^2 
\gamma^{-1} |\bu_h|_s^2.
\end{equation}
\end{lem}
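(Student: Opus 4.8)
\noindent The plan is to run the standard Nitsche trace estimate: express $c(\bu_h,\bu_h)$ as a sum of boundary integrals, bound each by a weighted Cauchy--Schwarz inequality chosen so that the ``displacement'' factor reproduces a piece of the stabilization seminorm $|\bu_h|_s$ and the ``stress'' factor is controlled by the discrete trace inequality \eqref{eq:trace2}, and then finish with Young's inequality. Concretely I would first split
\[
c(\bu_h,\bu_h)=\underbrace{(\bn\cdot(\bsigma(\bu_h)\bn),\,\bu_h\cdot\bn)_{\partial\Omega}}_{=:\,c_N}+\underbrace{(\bft\cdot(\bsigma(\bu_h)\bn),\,\bft\bu_h)_{\partial\Omega_D}}_{=:\,c_T},
\]
and treat $c_N$ and $c_T$ separately, each as a sum over boundary faces $F$.

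For $c_N$ I would decompose $\bn\cdot(\bsigma(\bu_h)\bn)=2\mu\,\bn\cdot(\nabla^s\bu_h)\bn+\lambda\,\nabla\cdot\bu_h$. The key point is the $\lambda$-part: by construction $\nabla\cdot\bu_h$ is constant on every $T\in\mcTh$, hence constant on each boundary face, so $\lambda\,\nabla\cdot\bu_h|_F=\tilde\pi_0(\lambda\,\nabla\cdot\bu_h)|_F$ and therefore $\int_F\lambda\,\nabla\cdot\bu_h\;\bu_h\cdot\bn\,\text{d}s=\int_F\lambda\,\nabla\cdot\bu_h\;\tilde\pi_0(\bu_h\cdot\bn)\,\text{d}s$. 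This is exactly what makes the $\lambda\tilde\pi_0$-weighted part of $|\bu_h|_s$ (rather than the full $\lambda$-weighted trace of $\bu_h\cdot\bn$) appear. On each $F$ I then apply Cauchy--Schwarz with weight $(h/\gamma)^{\pm 1/2}$ together with $\mu^{\pm 1/2}$, resp.\ $\lambda^{\pm 1/2}$: the factors $(\gamma/h)^{1/2}\mu^{1/2}\,\bu_h\cdot\bn$ and $(\gamma/h)^{1/2}\lambda^{1/2}\,\tilde\pi_0(\bu_h\cdot\bn)$ sum over $F$ to at most $|\bu_h|_s^2$, while the factors $(h/\gamma)^{1/2}\mu^{1/2}|\nabla^s\bu_h|$ and $(h/\gamma)^{1/2}\lambda^{1/2}|\nabla\cdot\bu_h|$ are handled by \eqref{eq:trace2} --- legitimate since $\bsigma(\bu_h)$ is elementwise polynomial (piecewise constant on the subcells) --- which turns $h^{1/2}\|\cdot\|_{\partial T}$ into $C_T\|\cdot\|_T$; after summation over $T$ these are bounded by $C_T\gamma^{-1/2}$ times $\|\mu^{1/2}\nabla^s\bu_h\|_\Omega$, resp.\ $\|\lambda^{1/2}\nabla\cdot\bu_h\|_\Omega$, i.e.\ by $C_T\gamma^{-1/2}$ times a piece of $|||\bu_h|||_E$.

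For $c_T$ I would use $\bft\bn=(\mathbb I-\bn\otimes\bn)\bn=0$, so that $\bft\cdot(\bsigma(\bu_h)\bn)=2\mu\,\bft(\nabla^s\bu_h\,\bn)$ carries no $\lambda$-term and no projection is needed; the same weighted Cauchy--Schwarz plus \eqref{eq:trace2} bounds $c_T$ by $C_T\gamma^{-1/2}\|\mu^{1/2}\nabla^s\bu_h\|_\Omega$ times $(\gamma\mu/h)^{1/2}\|\bft\bu_h\|_{\partial\Omega_D}\le|\bu_h|_s$. Combining the bounds for $c_N$ and $c_T$ gives $c(\bu_h,\bu_h)\le C\,C_T\gamma^{-1/2}\,|||\bu_h|||_E\,|\bu_h|_s$ with $C$ a fixed numerical constant (which I suppress into $C_T$), and Young's inequality $ab\le\epsilon a^2+(4\epsilon)^{-1}b^2$ then yields $c(\bu_h,\bu_h)\le\epsilon|||\bu_h|||_E^2+\epsilon^{-1}C_T^2\gamma^{-1}|\bu_h|_s^2$.

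The only step beyond routine bookkeeping is the $\lambda$-part of $c_N$: one must pair $\lambda\,\nabla\cdot\bu_h$ against $\tilde\pi_0(\bu_h\cdot\bn)$ and not against $\bu_h\cdot\bn$, which is allowed only because $\nabla\cdot\bu_h$ is elementwise constant --- the locking-free design feature of $V_h$. Since $\tilde\pi_0$ is an $L^2$-contraction the trade cannot be made the other way, so without this observation the estimate would not close against the (locking-free) stabilization $s$, which is precisely the content of the remark after the definition of $s$. A minor point to verify is the applicability of \eqref{eq:trace2} to $\bsigma(\bu_h)$, which is fine because $\nabla\bu_h$ is piecewise constant on the subgrid and the trace inequality holds subcell by subcell.
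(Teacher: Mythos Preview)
Your proposal is correct and follows essentially the same route as the paper: decompose $c(\bu_h,\bu_h)$ into the $2\mu\,\bn\cdot\nabla^s\bu_h\bn$, $\lambda\,\nabla\cdot\bu_h$, and tangential $2\mu\,\bft\nabla^s\bu_h\bn$ contributions, use that $\nabla\cdot\bu_h|_F$ is constant to insert $\tilde\pi_0$ in the $\lambda$-term, then bound each piece by Cauchy--Schwarz, the discrete trace inequality \eqref{eq:trace2}, and Young. The only cosmetic difference is that the paper applies Young's inequality term-by-term, whereas you first collect everything into a single product $C_T\gamma^{-1/2}|||\bu_h|||_E\,|\bu_h|_s$ and apply Young once at the end.
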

\begin{proof}
This proof follows the ideas of \cite{HL03}, we include it here for completeness.
First we note that
\[
c(\bu_h,\bu_h)  =  (2 \mu \bn \cdot \nabla^s \bu_h \bn+ \lambda \nabla \cdot
\bu_h, \bu_h \cdot \bn)_{\partial \Omega}
+ (2 \mu \bft \cdot \nabla^s \bu_h \bn, \bu_h \cdot \bft)_{\partial
  \Omega_D}.
\]
Since for $F \in \mathcal{F}_b$, $\nabla \cdot \bu_h\vert_F \in \mathbb{P}_0(F)$ there holds
\[
(\lambda \nabla \cdot
\bu_h, \bu_h \cdot \bn)_{\partial \Omega} = (\lambda \nabla \cdot
\bu_h, \tilde \pi_0 \bu_h \cdot \bn)_{\partial \Omega}.
\]
Applying the Cauchy--Schwarz inequality followed by the trace inequality
\eqref{eq:trace2} we see that for all $\epsilon>0$,
\[
(2 \mu \bn \cdot \nabla^s \bu_h \bn, \bu_h \cdot \bn)_{\partial
  \Omega} \leq 2 C_T \|\mu^{\frac12} \nabla^s \bu_h\|_\Omega
\|\mu^{\frac12} h^{-\frac12} \bu_h \cdot \bn\|_{\partial \Omega} \leq \epsilon \|\mu^{\frac12} \nabla^s
\bu_h\|^2_\Omega+  C_T^2 \epsilon^{-1}\|\mu^{\frac12} h^{-\frac12} \bu_h \cdot \bn\|_{\partial \Omega}^2
\]
\[
  (2 \bft \cdot \mu \nabla^s \bu_h \bn, \bu_h \cdot \bft)_{\partial
  \Omega_D}\leq  2 C_T \|\mu^{\frac12} \nabla^s \bu_h\|_\Omega
\|\mu^{\frac12} h^{-\frac12} \bu_h \cdot \bft\|_{\partial \Omega_D} \leq \epsilon \|\mu^{\frac12} \nabla^s
\bu_h\|^2_\Omega+  C_T^2 \epsilon^{-1}\| \mu^{\frac12} h^{-\frac12}\bu_h \cdot \bft\|_{\partial \Omega_D}^2
\]
and
\begin{multline*}
(\lambda \nabla \cdot
\bu_h, \tilde \pi_0 \bu_h \cdot \bn)_{\partial \Omega}\leq  C_T
\|\lambda^{\frac12} \nabla \cdot \bu_h\|_\Omega
\gamma^{-\frac12} \|\lambda^{\frac12} h^{-\frac12}\tilde \pi_0 \bu_h \cdot
\bn\|_{\partial \Omega} \\
\leq \epsilon \|\lambda^{\frac12} \nabla
\cdot \bu_h\|^2_\Omega+ C_T^2 4^{-1} \epsilon^{-1}\|\lambda^{\frac12} h^{-\frac12} \tilde \pi_0 \bu_h \cdot
\bn\|_{\partial \Omega}^2.
\end{multline*}
Summing up the different contributions and observing that
\[
\|\mu^{\frac12} h^{-\frac12} \bu_h \cdot \bn\|_{\partial \Omega}^2+\|\lambda^{\frac12} h^{-\frac12} \tilde \pi_0 \bu_h \cdot
\bn\|_{\partial \Omega}^2+\| \mu^{\frac12} h^{-\frac12}\bu_h \cdot
\bft\|_{\partial \Omega_D}^2 \leq \gamma^{-1} |\bu_h|_s^2
\]
we see that
\[
c(\bu_h,\bu_h) \leq \epsilon (2\|\mu^{\frac12} \nabla^s
\bu_h\|^2_\Omega+ \|\lambda^{\frac12} \nabla
\cdot \bu_h\|^2_\Omega) + C_T^2 \epsilon^{-1}\gamma^{-1} |\bu_h|_s^2.
\]
This proves the claim.
\end{proof}
\begin{lem}\label{nit_coerciv}
Assume that $\gamma \ge 4 C_T \epsilon^{-1}$, with $0<\epsilon<1$,
then there exists $\alpha>0$ such that for all $\bv_h \in V_h$ there holds,
\[
\alpha |||\bv_h|||_{E,h}^2 \leq A_{E,h}(\bv_h,\bv_h).
\]
For the choice $\gamma=16 C_T^2$, $\alpha=\tfrac12$.
\end{lem}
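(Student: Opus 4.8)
The plan is the classical Nitsche coercivity argument: evaluate $A_{E,h}$ on the diagonal, separate off the symmetric consistency terms $c(\bv_h,\cdot)$, and absorb them into the coercive part of $a_E$ and into the penalty seminorm by means of the trace bound of Lemma~\ref{eq:nit_trace}. For $\bv_h \in V_h$ we first write
\[
A_{E,h}(\bv_h,\bv_h) = a_E(\bv_h,\bv_h) - 2\, c(\bv_h,\bv_h) + s(\bv_h,\bv_h).
\]
Since $\bsigma(\bv_h):\nabla^s\bv_h = 2\mu\,|\nabla^s\bv_h|^2 + \lambda\,(\nabla\cdot\bv_h)^2$ pointwise, we have $a_E(\bv_h,\bv_h) = |||\bv_h|||_E^2$ (only $a_E(\bv_h,\bv_h)\ge\alpha_E|||\bv_h|||_E^2$ is really needed), while $s(\bv_h,\bv_h) = |\bv_h|_s^2$ by definition; hence
\[
A_{E,h}(\bv_h,\bv_h) = |||\bv_h|||_{E,h}^2 - 2\, c(\bv_h,\bv_h).
\]

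Next I would invoke Lemma~\ref{eq:nit_trace} with the parameter $\epsilon$ appearing in the statement, which yields $2\, c(\bv_h,\bv_h) \le 2\epsilon\,|||\bv_h|||_E^2 + 2 C_T^2 \epsilon^{-1}\gamma^{-1}\,|\bv_h|_s^2$. Substituting gives
\[
A_{E,h}(\bv_h,\bv_h) \ge (1-2\epsilon)\,|||\bv_h|||_E^2 + \left(1 - 2 C_T^2 \epsilon^{-1}\gamma^{-1}\right)|\bv_h|_s^2 .
\]
Taking $\epsilon < \tfrac12$ keeps the first coefficient positive, and the lower bound assumed on $\gamma$ keeps the second coefficient bounded away from zero; therefore $A_{E,h}(\bv_h,\bv_h)\ge\alpha\,|||\bv_h|||_{E,h}^2$ with $\alpha := \min\bigl(1-2\epsilon,\,1 - 2 C_T^2 \epsilon^{-1}\gamma^{-1}\bigr)>0$. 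For the explicit choice $\epsilon = \tfrac14$, $\gamma = 16 C_T^2$ one gets $1-2\epsilon = \tfrac12$ and $2 C_T^2 \epsilon^{-1}\gamma^{-1} = 8 C_T^2/(16 C_T^2) = \tfrac12$, so $\alpha = \tfrac12$, as claimed.

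There is essentially no obstacle: once Lemma~\ref{eq:nit_trace} is in hand this is pure bookkeeping of constants. The two points to keep in mind are that $\epsilon$ must be chosen below $\tfrac12$ so that the $|||\cdot|||_E$ contribution survives the absorption, and that all the substantive work — in particular the insertion of the projection $\tilde\pi_0$ into the $\lambda$-dependent parts of $c$ and $s$, which is precisely what makes the trace estimate uniform in $\lambda$ and hence $A_{E,h}$ locking free — is already contained in Lemma~\ref{eq:nit_trace}. To conclude, one recalls that $|||\cdot|||_{E,h}$ is a genuine norm on $V_h$, indeed on all of $H^1(\Omega)$, by Korn's and Poincar\'e's inequalities, so the bound above is coercivity in this norm.
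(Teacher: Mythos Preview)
Your argument is correct and essentially identical to the paper's: expand $A_{E,h}(\bv_h,\bv_h)$, invoke Lemma~\ref{eq:nit_trace}, and balance the two coefficients. The only cosmetic difference is that the paper applies the lemma with parameter $\epsilon/2$ so that the resulting coefficients read $(1-\epsilon)$ and $(1-4C_T^2\epsilon^{-1}\gamma^{-1})$, which covers the full stated range $0<\epsilon<1$ (with the explicit case $\epsilon=\tfrac12$, $\gamma=16C_T^2$), whereas your direct application gives $(1-2\epsilon)$ and so needs $\epsilon<\tfrac12$; this is just a relabelling and your choice $\epsilon=\tfrac14$ recovers the same $\alpha=\tfrac12$.
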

\begin{proof}
By definition 
\[
A_{E,h}(\bv_h,\bv_h) \ge 2 \|\mu^{\frac12} \nabla^s \bv_h\|_\Omega^2 +
\|\lambda^{\frac12} \nabla \cdot \bv_h\|_\Omega^2 + |\bv_h|^2_s - 2
c(\bv_h,\bv_h) \ge |||\bv_h|||^2_E + |\bv_h|^2_s - 2
c(\bv_h,\bv_h).
\]
Using the result of Lemma \ref{eq:nit_trace} we see that
\begin{multline*}
A_{E,h}(\bv_h,\bv_h) \ge  |||\bv_h|||^2_E + |\bv_h|^2_s - \epsilon
|||\bv_h|||^2_E - 4 \epsilon^{-1} C_T^2 
\gamma^{-1} |\bv_h|_s^2 \\
= (1-\epsilon) |||\bv_h|||^2_E + (1-4 \epsilon^{-1} C_T^2 
\gamma^{-1}) |\bv_h|_s^2.
\end{multline*}
Taking $0<\epsilon<1$ and $\gamma \ge 4 C_T^2 \epsilon^{-1}$ proves the claim.
For the particular choice $\epsilon=1/2$ and $\gamma=16 C_T^2$ we see that
\[
A_{E,h}(\bv_h,\bv_h) \ge  \frac12 |||\bv_h|||^2_{E,h} .
\]
\end{proof}
\begin{prop}
Let $\bu$ be the solution of \eqref{eq:elasticity} with the boundary conditions
\eqref{eq:zero_traction_bc} and $\bu_h$ the solution of
\eqref{eq:FEM_elasticity}, then there holds
\[
||| \bu - \bu_h|||_{E,h} \leq C h \|\ff\|_\Omega
\]
where the constant $C$ is independent of $\lambda$.
\end{prop}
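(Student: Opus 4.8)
The plan is to combine C\'ea-type quasi-optimality with the approximation estimate of Lemma~\ref{lem:approx_elast}. First I would establish Galerkin orthogonality for the Nitsche form: since $\bu$ solves \eqref{eq:elasticity} with boundary conditions \eqref{eq:zero_traction_bc} and the method is consistent (the boundary terms in $c$ and $l_c$ cancel exactly against the true traction and prescribed data, using $\bft(\bsigma(\bu)\bn)=0$ on $\partial\Omega_N$ and $\bft\bu=\bg_D$, $\bu\cdot\bn=g_N$), we get $A_{E,h}(\bu-\bu_h,\bv_h)=0$ for all $\bv_h\in V_h$. A small subtlety is that $A_{E,h}$ as written acts on $V_h$; to make sense of $A_{E,h}(\bu-\bu_h,\cdot)$ one invokes the trace inequality \eqref{eq:trace1} so that all terms, including $c(\bu,\bv_h)$, are well defined for $\bu\in H^2(\Omega)$, and one must check that $\tilde\pi_0$ appearing in $s$ and $l_c$ is applied consistently — here the identity $\tilde\pi_0\pi_h\bu=\tilde\pi_0\bu$ from the proof of Lemma~\ref{lem:approx_elast} is what keeps consistency intact when comparing against $\pi_h\bu$.

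Next I would write $\bu-\bu_h=(\bu-\pi_h\bu)+(\pi_h\bu-\bu_h)$ and set $\be_h:=\pi_h\bu-\bu_h\in V_h$. By the triangle inequality it suffices to bound $|||\be_h|||_{E,h}$. Apply the coercivity of Lemma~\ref{nit_coerciv} (with the explicit choice $\gamma=16C_T^2$, $\alpha=\tfrac12$) to get $\tfrac12|||\be_h|||_{E,h}^2\le A_{E,h}(\be_h,\be_h)$, then use Galerkin orthogonality to replace $\bu_h$ by $\bu$, obtaining $A_{E,h}(\be_h,\be_h)=A_{E,h}(\bu-\pi_h\bu,\be_h)$. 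The remaining task is to bound each of the four pieces of $A_{E,h}(\bu-\pi_h\bu,\be_h)$ by $C|||\bu-\pi_h\bu|||_{E,h}\,|||\be_h|||_{E,h}$: the bulk term $a_E$ by Cauchy--Schwarz directly; the penalty term $s$ by Cauchy--Schwarz in the $s$-inner product; and the two consistency terms $c(\bu-\pi_h\bu,\be_h)$ and $c(\be_h,\bu-\pi_h\bu)$ by the argument of Lemma~\ref{eq:nit_trace} — for the first, bounding $c(\be_h,\cdot)$ using \eqref{eq:trace2} against $|\be_h|_s$ and the trace of $\bu-\pi_h\bu$; for the second, using \eqref{eq:trace1} on $\bu-\pi_h\bu\in H^2$ to control $\bn\cdot(\bsigma(\bu-\pi_h\bu)\bn)$ and $\bft\cdot(\bsigma(\bu-\pi_h\bu)\bn)$ on the boundary, against $|\be_h|_s\subset|||\be_h|||_{E,h}$. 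Combining, $|||\be_h|||_{E,h}\le C|||\bu-\pi_h\bu|||_{E,h}$, so $|||\bu-\bu_h|||_{E,h}\le C|||\bu-\pi_h\bu|||_{E,h}$.

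Finally I would invoke Lemma~\ref{lem:approx_elast} to get $|||\bu-\pi_h\bu|||_{E,h}\le Ch(\mu^{1/2}|\bu|_{H^2(\Omega)}+\lambda^{1/2}|\nabla\cdot\bu|_{H^1(\Omega)})$, and then absorb this into $Ch\|\ff\|_\Omega$ with $C$ independent of $\lambda$ exactly as in the previous proposition: $\lambda^{1/2}\|\nabla\cdot\bu\|_{H^1(\Omega)}\le\max(c\mu^{1/2}|\bu|_{H^2(\Omega)},\lambda\|\nabla\cdot\bu\|_{H^1(\Omega)})$, and both quantities are controlled by $C_R\|\ff\|_\Omega$ via the elliptic regularity estimate \eqref{eq:elasticity_regularity} (for $\mu$ in a fixed interval). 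The main obstacle, and the only place requiring care, is the treatment of the consistency term $c(\be_h,\bu-\pi_h\bu)$ where the argument is not symmetric: one needs the $H^2$-regularity of $\bu$ and the sharp trace inequality \eqref{eq:trace1} rather than \eqref{eq:trace2}, together with the key cancellation $\tilde\pi_0(\bu-\pi_h\bu)\cdot\bn=0$ so that the $\lambda$-dependent part of the normal penalty contributes nothing — this is precisely what makes the final constant $\lambda$-independent and the method locking free.
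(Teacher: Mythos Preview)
Your overall architecture (consistency, coercivity via Lemma~\ref{nit_coerciv}, continuity, then approximation via Lemma~\ref{lem:approx_elast} and regularity \eqref{eq:elasticity_regularity}) matches the paper's. The gap is in your continuity step: the claim that all four pieces of $A_{E,h}(\bu-\pi_h\bu,\be_h)$ are bounded by $C\,|||\bu-\pi_h\bu|||_{E,h}\,|||\be_h|||_{E,h}$ with a $\lambda$-independent $C$ fails for the term
\[
(\lambda\,\nabla\cdot(\bu-\pi_h\bu),\ \be_h\cdot\bn)_{\partial\Omega}
\]
arising in $c(\bu-\pi_h\bu,\be_h)$. The cancellation $\tilde\pi_0(\bu-\pi_h\bu)\cdot\bn=0$ that you correctly invoke disposes of the $\lambda$-parts of $s(\bu-\pi_h\bu,\be_h)$ and of $c(\be_h,\bu-\pi_h\bu)$ (since $\nabla\cdot\be_h$ is facewise constant), but it does \emph{not} help here: $\nabla\cdot(\bu-\pi_h\bu)$ is not facewise constant because $\bu$ is smooth, so you cannot replace $\be_h\cdot\bn$ by $\tilde\pi_0\be_h\cdot\bn$. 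The $\lambda$-scaled piece of $|\be_h|_s$ controls only $\tilde\pi_0\be_h\cdot\bn$, and $|||\be_h|||_{E,h}$ gives no $\lambda^{1/2}$-weighted control of the full $\be_h\cdot\bn$ on $\partial\Omega$ or of $\|\nabla\be_h\|_\Omega$. Consequently the bound you assert would carry a hidden factor $(\lambda/\mu)^{1/2}$.

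The paper's remedy is precisely to accept that this term is not absorbed into $|||\bu-\pi_h\bu|||_{E,h}$: it pairs $\lambda\nabla\cdot(\bu-\pi_h\bu)$ against the $\mu$-scaled penalty $\mu^{1/2}h^{-1/2}\|\be_h\cdot\bn\|_{\partial\Omega}\le|\be_h|_s$, obtaining the extra continuity contribution $h^{1/2}\mu^{-1/2}\|\lambda\nabla\cdot(\bu-\pi_h\bu)\|_{\partial\Omega}$. This is then bounded via \eqref{eq:trace1} and approximation by $Ch\,\mu^{-1/2}\lambda|\nabla\cdot\bu|_{H^1(\Omega)}$, and the \emph{full} power of $\lambda$ (not $\lambda^{1/2}$) is absorbed by the regularity estimate \eqref{eq:elasticity_regularity}, which controls $\lambda\|\nabla\cdot\bu\|_{H^1(\Omega)}$. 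Your final paragraph invokes only $\lambda^{1/2}\|\nabla\cdot\bu\|_{H^1(\Omega)}$, which is not what is actually needed once the continuity estimate is done correctly.
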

\begin{proof}
First note that by the triangle inequality there holds
\[
||| \bu - \bu_h|||_{E,h} \leq ||| \bu - \pi_h \bu|||_{E,h} 
+ ||| \pi_h \bu - \bu_h|||_{E,h}.
\]
Using the coercivity of Lemma \ref{nit_coerciv} we have, with $\be_h:= \pi_h \bu - \bu_h$
\[
\frac12 ||| \be_h|||_{E,h}^2 \leq A_{E,h}(\be_h,\be_h).
\]
Using now the consistency of $A_{E,h}$ we see that
\[
\frac12 ||| \be_h|||_{E,h}^2 \leq A_{E,h}(\pi_h \bu - \bu,\be_h).
\]
We also have the following continuity of the form $A_{E,h}$,
\[
A_{E,h}(\pi_h \bu - \bu,\be_h) \leq C |||\be_h|||_{E,h}(|||\pi_h \bu -
\bu|||_{E,h} + h^{\frac12}\|\mu^{\frac12} \nabla^s (\pi_h \bu -
\bu)\|_{\partial \Omega} +
h^{\frac12}\|\lambda/\mu^{\frac12} \nabla \cdot (\pi_h \bu -
\bu) \|_{\partial \Omega}),
\]
here we used 
the Cauchy-Schwarz inequality termwise and, for the terms with a factor $\lambda$,
the relations
\[
\lambda (\nabla \cdot \be_h + h^{-1} \tilde \pi_0 \be_h, \pi_h \bu -
\bu)_{\partial \Omega}  = 0
\]
and
\[
\lambda (\nabla \cdot (\pi_h \bu -
\bu),\be_h \cdot \bn)_{\partial \Omega} \leq \lambda \mu^{-\frac12}
h^{\frac12} \|\nabla \cdot (\pi_h \bu -
\bu)\|_{\partial \Omega} \mu^{\frac12} h^{-\frac12} \|\be_h \cdot \bn\|_{\partial \Omega}.
\]
It follows that
\[
\frac12 ||| \be_h|||_{E,h} \leq (|||\pi_h \bu -
\bu|||_{E,h} + h^{\frac12}\|\mu^{\frac12} \nabla^s (\pi_h \bu -
\bu)\|_{\partial \Omega} +
h^{\frac12}\|\lambda/\mu^{\frac12} \nabla \cdot (\pi_h \bu -
\bu) \|_{\partial \Omega}).
\]
and as a consequence
\[
 ||| \bu - \bu_h |||_{E,h} \leq C(|||\pi_h \bu -
\bu|||_{E,h} + h^{\frac12}\|\mu^{\frac12} \nabla^s (\pi_h \bu -
\bu)\|_{\partial \Omega} +
h^{\frac12}\mu^{-\frac12} \|\lambda \nabla \cdot (\pi_h \bu -
\bu) \|_{\partial \Omega}).
\]
The error estimate is concluded by the approximation result of Lemma
\ref{lem:approx_elast} and the inequality \eqref{eq:trace1} by which
\begin{multline}\label{eq:bound_approx}
h^{\frac12}\|\mu^{\frac12} \nabla^s (\pi_h \bu -
\bu)\|_{\partial \Omega} +
h^{\frac12} \mu^{-\frac12} \|\lambda^{\frac12} \nabla \cdot (\pi_h \bu -
\bu) \|_{\partial \Omega}\leq C (\|\mu^{\frac12} \nabla^s (\pi_h \bu -
\bu)\|_{\Omega} + \mu^{-\frac12} \|\lambda\nabla \cdot (\pi_h \bu -
\bu) \|_{\Omega})\\
+ C h (\mu^{\frac12}  |\bu|_{H^2(\Omega)} + \mu^{-\frac12} \lambda |\nabla \cdot \bu|_{H^1(\Omega)}),
\end{multline}
followed by approximation. This leads to
\[
||| \bu - \bu_h|||_{E,h} \leq C \mu^{-\frac12} h
(\mu |\bu|_{H^2(\Omega)} + \lambda |\nabla \cdot \bu|_{H^1(\Omega)}) \leq
C h \|\ff\|_\Omega,
\]
where $C$ depends on $\mu$ but not on $\lambda$. The second inequality
is a consequence of the elliptic regularity \eqref{eq:elasticity_regularity}.
\end{proof}
\subsection{Zero viscosity limit for the Brinkman problem}
We now consider the problem \eqref{eq:brinkman}, but instead of
imposing Dirichlet boundary conditions strongly we here consider using 
Nitsche's method on the tangential component. The Dirichlet condition
on the normal component is still imposed strongly. This way the method can
handle all values of the viscosity, also $\mu=0$. To fix the ideas we
assume that $\sigma>0$ and $\mu \ge 0$ in \eqref{eq:brinkman}. If
$\mu=0$ we only impose the boundary condition on the normal component
\begin{equation}\label{eq:no_inflow}
\bu_h \cdot \bn\vert_{\partial \Omega} = 0.
\end{equation}
We see that the finite element solution will then be found in a
subspace of
$$
V^0_{\bn} := \{ \bv \in V: (\bv \cdot \bn)\vert_{\partial \Omega}  = 0\}
$$
instead of $V^0$.
To impose this condition strongly on the discrete solution we introduce the
space 
\[
V^0_{\bn,h} := \{ \bv \in V_h: \bv \cdot \bn = 0\}.
\]
This space can easily be constructed on polyhedral domains, by setting both
the boundary bubble degrees of freedom and the normal component of the
nodal degrees of freedom to zero. The Dirichlet condition
on the tangential component will then be imposed using Nitsche's
method \cite{FS95}.

This time the Nitsche formulation 
takes the form: Find $(\bu_h, p_h) \in V^0_{\bn,h} \times Q_h$ such that
\begin{equation}\label{eq:FEM_Brinkman_nitsche}
A_{B,h}(\bu_h,\bv_h) = l_B(\bv_h,q_h), \quad \forall (\bv_h, q_h) \in V^0_{\bn,h} \times Q_h
\end{equation}
with
\[
A_{B,h}(\bu_h,\bv_h):=A_B(\bu_h,\bv_h) - m(\bu_h,\bv_h)-m(\bv_h,\bu_h)+s(\bu_h,\bv_h)
\]
where
\[
m(\bu_h,\bv_h):= (\bft  \sigma(\bu_h,p_h)  \bn, \bft
\bv_h)_{\partial \Omega} = (\bft  \mu \nabla \bu_h\bn, \bft
\bv_h)_{\partial \Omega}
\]
with $\sigma(\bu,p) := \mu \nabla \bu - p \mathbb{I}$
and
\[
s(\bu_h,\bv_h) :=(\gamma/h \mu ~ \bft \bu_h , ~ \bft \bv_h)_{\partial
  \Omega}.
\]
For the analysis of the Nitsche conditions we define the
triple norm
\begin{equation}\label{def:3B}
|||\bv_h,y_h|||_{B,h}^2 := |||\bv_h,y_h|||_{B}^2 + |\bv_h|^2_s.
\end{equation}
As noted in section \ref{sec:FEMconst} there exists an interpolant $\pi_{h,\bn}:V_{\bn}^0 \mapsto V^0_{\bn,h} $ with the same commutation and approximation properties as
$\pi_h$ in
\eqref{eq:approx}, with some abuse of notation we drop the subscript
$\bn$ below. In particular it is straightforward to show, using the same arguments as in Lemma \ref{eq:approx_elast},  that
the following Lemma holds.
\begin{lem}\label{lem:approx1}
Let $\bu \in V_{\bn}^0$ then there holds
\[
|||\bu - \pi_h \bu, 0 |||_{B,h} \leq C h (\mu^{\frac12} |\bu|_{H^2(\Omega)}
+ \sigma^{\frac12} |\bu|_{H^1(\Omega)})
\]
\end{lem}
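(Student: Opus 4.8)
The plan is to mirror exactly the argument already carried out for the elasticity analogue in Lemma~\ref{lem:approx_elast}, since the $B,h$-norm is built from the $B$-norm in the same way that the $E,h$-norm is built from the $E$-norm, namely by adding the single boundary stabilization seminorm $|\cdot|_s$ (which here involves only the tangential component $\bft \bu_h$ on $\partial\Omega$). First I would split
\[
|||\bu - \pi_h \bu, 0 |||_{B,h}^2 = |||\bu - \pi_h \bu, 0 |||_{B}^2 + |\bu - \pi_h \bu|_s^2
\]
and treat the two pieces separately. For the first piece I would invoke the commutation property $\nabla\cdot\pi_h\bu = \pi_0\nabla\cdot\bu$ of the Fortin interpolant together with the approximation estimates \eqref{eq:approx}: the $\mu^{\frac12}\|\nabla(\bu-\pi_h\bu)\|_\Omega$ term is bounded by $Ch\mu^{\frac12}|\bu|_{H^2(\Omega)}$, the $\sigma^{\frac12}\|\bu-\pi_h\bu\|_\Omega$ term by $C\min(h\sigma^{\frac12}|\bu|_{H^1(\Omega)},h^2\sigma^{\frac12}|\bu|_{H^2(\Omega)})$ which in particular is $\le Ch\sigma^{\frac12}|\bu|_{H^1(\Omega)}$, the divergence term $\|\nabla\cdot(\bu-\pi_h\bu)\|_\Omega = \|\nabla\cdot\bu - \pi_0\nabla\cdot\bu\|_\Omega \le Ch|\nabla\cdot\bu|_{H^1(\Omega)} \le Ch|\bu|_{H^2(\Omega)}$, and the pressure component vanishes since $y_h=0$.

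For the stabilization piece, $|\bu-\pi_h\bu|_s^2 = (\gamma/h)\,\mu\,\|\bft(\bu-\pi_h\bu)\|_{\partial\Omega}^2$, I would apply the trace inequality \eqref{eq:trace1} face by face, $\|\bu-\pi_h\bu\|_{\partial T} \le C_T(h^{-\frac12}\|\bu-\pi_h\bu\|_T + h^{\frac12}\|\nabla(\bu-\pi_h\bu)\|_T)$, and then sum over the boundary faces and use \eqref{eq:approx} to get $(\mu/h)^{\frac12}\|\bft(\bu-\pi_h\bu)\|_{\partial\Omega} \le Ch\mu^{\frac12}|\bu|_{H^2(\Omega)}$. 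Note that, unlike the elasticity case, there is no $\lambda$-weighted normal-component penalty and hence no need to exploit $\tilde\pi_0\pi_h\bu = \tilde\pi_0\bu$; the only subtlety is that the interpolant here is $\pi_{h,\bn}$ mapping into $V^0_{\bn,h}$, but by the remark in section~\ref{sec:FEMconst} it has the same commutation and approximation properties, so the argument goes through verbatim. Collecting the bounds and using $\sqrt{a+b}\le\sqrt a+\sqrt b$ yields the claimed estimate.

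I do not expect a genuine obstacle here: this is essentially a routine repetition of Lemma~\ref{lem:approx_elast} with the $\lambda$-terms deleted and $\lambda\,\tilde\pi_0$ replaced by $\sigma$ on the bulk (and only $\mu$ appearing on the boundary). The one point to state carefully is that $\pi_h\bu \in V^0_{\bn,h}$ indeed satisfies \eqref{eq:approx}, which is already granted in the text, and that $|\bu|_{H^2(\Omega)}$ controls $|\nabla\cdot\bu|_{H^1(\Omega)}$ so that the divergence-approximation term can be folded into the $\mu^{\frac12}|\bu|_{H^2(\Omega)}$ contribution.

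\begin{proof}
We split
\[
|||\bu - \pi_h \bu, 0 |||_{B,h}^2 = |||\bu - \pi_h \bu, 0 |||_{B}^2 + |\bu - \pi_h \bu|_s^2 .
\]
For the first term, by the commutation property $\nabla\cdot\pi_h\bu = \pi_0\nabla\cdot\bu$ of the Fortin interpolant (here $\pi_h=\pi_{h,\bn}$, which shares the properties \eqref{eq:approx}) and the definition \eqref{eq:trip_elast_inc} with $y_h=0$,
\[
|||\bu - \pi_h \bu, 0 |||_{B}^2 = \|\mu^{\frac12}\nabla(\bu-\pi_h\bu)\|_\Omega^2 + \|\sigma^{\frac12}(\bu-\pi_h\bu)\|_\Omega^2 + \|\nabla\cdot\bu - \pi_0\nabla\cdot\bu\|_\Omega^2 .
\]
Using \eqref{eq:approx} on the first two terms and the approximation property of $\pi_0$ on the third, together with $|\nabla\cdot\bu|_{H^1(\Omega)}\le |\bu|_{H^2(\Omega)}$, we obtain
\[
|||\bu - \pi_h \bu, 0 |||_{B} \leq C h (\mu^{\frac12}|\bu|_{H^2(\Omega)} + \sigma^{\frac12}|\bu|_{H^1(\Omega)}) .
\]
For the stabilization term, $|\bu-\pi_h\bu|_s^2 = (\gamma/h)\mu\|\bft(\bu-\pi_h\bu)\|_{\partial\Omega}^2 \le (\gamma/h)\mu\|\bu-\pi_h\bu\|_{\partial\Omega}^2$; applying the trace inequality \eqref{eq:trace1} on each boundary face, summing, and invoking \eqref{eq:approx} gives
\[
(\mu/h)^{\frac12}\|\bft(\bu-\pi_h\bu)\|_{\partial\Omega} \leq C h \mu^{\frac12}|\bu|_{H^2(\Omega)} .
\]
Combining the two bounds via $\sqrt{a+b}\le\sqrt a+\sqrt b$ proves the claim.
\end{proof}
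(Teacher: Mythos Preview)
Your proposal is correct and takes exactly the approach the paper itself indicates: the paper's proof consists of the single sentence ``The proof is identical to that of Lemma~\ref{lem:approx_elast},'' and you have spelled out precisely that repetition. One small caveat (which the paper's own statement shares): absorbing the unweighted divergence contribution $\|\nabla\cdot(\bu-\pi_h\bu)\|_\Omega\le Ch|\bu|_{H^2(\Omega)}$ into the $\mu^{1/2}|\bu|_{H^2(\Omega)}$ term is not uniform as $\mu\to 0$, but this is immaterial since in the only downstream use (Proposition~\ref{prop:apriori_brinkman_nit}) the divergence part cancels by orthogonality anyway.
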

\begin{proof}
The proof is identical to that of Lemma \ref{lem:approx_elast}.
\end{proof}
\begin{prop}\label{prop:infsup_nit}
There exists $\alpha_B$, such that, assuming 
$\gamma$ large enough, then for all $(\bv_h,y_h) \in V_{\bn,h}^0 \times Q_h$ there holds
\[
\alpha_B |||\bv_h, y_h|||_{B,h} \leq \sup_{\bw_h,q_h \in (V_{\bn,h}^0\setminus 0)
  \times Q_h )} \frac{A_{B,h}[(\bv_h, y_h),(\bw_h,q_h)]}{|||\bw_h,q_h|||_B}.
\]
\end{prop}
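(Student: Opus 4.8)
The plan is to mimic the proof of Proposition~\ref{prop:infsup}, following the classical two-step argument for inf-sup stability of saddle-point problems with an added Nitsche boundary term, but now paying attention to the fact that coercivity holds only \emph{after} absorbing the $m$-terms via a trace inequality. First I would establish a coercivity-type bound on the diagonal: taking $\bw_h=\bv_h$ and $q_h=y_h$ gives $A_{B,h}[(\bv_h,y_h),(\bv_h,y_h)] = \|\mu^{\frac12}\nabla\bv_h\|_\Omega^2 + \|\sigma^{\frac12}\bv_h\|_\Omega^2 + |\bv_h|_s^2 - 2m(\bv_h,\bv_h)$, and then absorb $2m(\bv_h,\bv_h)$ using the Cauchy--Schwarz and trace inequality~\eqref{eq:trace2} exactly as in Lemma~\ref{eq:nit_trace} (with $\lambda$ set to zero and $\mu$ playing its role), choosing $\gamma$ large enough so that $A_{B,h}[(\bv_h,y_h),(\bv_h,y_h)] \ge c_1(\|\mu^{\frac12}\nabla\bv_h\|_\Omega^2 + \|\sigma^{\frac12}\bv_h\|_\Omega^2 + |\bv_h|_s^2)$ for some $c_1>0$. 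Note $m$ only involves $\mu\nabla\bu_h$ (the pressure part of $\sigma(\bu,p)$ drops out because $\bft\bv_h\cdot\bn=0$ and, more to the point, the $-p\bI$ contributes $-p\,\bft\bn=0$), so this step is clean.

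Next I would recover control of the pressure. As in Proposition~\ref{prop:infsup}, for $y_h\in Q_h$ take $\srf_y\in V_h^0$ from \eqref{eq:vp_func} with $\nabla\cdot\srf_y=y_h$ and $\|\srf_y\|_{H^1}\le C_0\|y_h\|_\Omega$; since $\srf_y\in V_h^0$ it also lies in $V_{\bn,h}^0$, so it is an admissible test function. Testing with $\bw_h=(\mu+\sigma)^{-1}\srf_y$, $q_h=0$ isolates $(\mu+\sigma)^{-1}\|y_h\|_\Omega^2 = A_{B,h}[(\bv_h,y_h),(\bw_h,0)] - (\mu\nabla\bv_h,\nabla\bw_h)_\Omega - (\sigma\bv_h,\bw_h)_\Omega + m(\bw_h,\bv_h) + m(\bv_h,\bw_h) - s(\bv_h,\bw_h)$. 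The new feature relative to the strongly-imposed case is the presence of the boundary terms $m(\bw_h,\bv_h)$, $m(\bv_h,\bw_h)$ and $s(\bv_h,\bw_h)$; each is bounded by Cauchy--Schwarz, the trace inequality~\eqref{eq:trace2}, and the $H^1$-stability of $\srf_y$, giving factors of $\|\mu^{\frac12}\nabla\bv_h\|_\Omega$ or $|\bv_h|_s$ against $(\mu+\sigma)^{-\frac12}\|y_h\|_\Omega$, so they can be absorbed into the already-controlled quantities plus a small multiple of $(\mu+\sigma)^{-1}\|y_h\|_\Omega^2$. This yields $\tfrac12(\mu+\sigma)^{-1}\|y_h\|_\Omega^2 \le A_{B,h}[(\bv_h,y_h),(\bw_h,0)] + C(\|\mu^{\frac12}\nabla\bv_h\|_\Omega^2 + \|\sigma^{\frac12}\bv_h\|_\Omega^2 + |\bv_h|_s^2)$.

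Combining, I would take $\bw_h = \delta_1\bv_h + \delta_2(\mu+\sigma)^{-1}\srf_y$ and $q_h = \delta_1 y_h + \delta_3\nabla\cdot\bv_h$ for suitably small $\delta_2,\delta_3$ relative to $\delta_1$, so that the good terms from the diagonal estimate dominate the cross terms from the pressure estimate; this gives $A_{B,h}[(\bv_h,y_h),(\bw_h,q_h)] \gtrsim |||\bv_h,y_h|||_{B,h}^2$, where the $\|\nabla\cdot\bv_h\|_\Omega^2$ contribution to the triple norm comes from the choice of $q_h$ exactly as in Proposition~\ref{prop:infsup}. It remains to bound $|||\bw_h,q_h|||_B$ (note: the statement uses the non-Nitsche norm $|||\cdot,\cdot|||_B$ in the denominator, which only makes the bound easier) by $C|||\bv_h,y_h|||_{B,h}$ via the triangle inequality and the $H^1$-stability of $\srf_y$, and then divide. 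The main obstacle is bookkeeping: one must choose $\gamma$ large enough \emph{and} the weights $\delta_i$ correctly so that all absorptions go through simultaneously, and one must be slightly careful that the trace-inequality constant from bounding $m(\srf_y,\cdot)$ enters the threshold on $\gamma$; there is no deep difficulty, only the need to track how the boundary terms interact with the Fortin-based pressure test function, which is the one genuinely new ingredient compared to both Proposition~\ref{prop:infsup} and Lemma~\ref{nit_coerciv}.
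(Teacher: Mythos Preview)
Your proposal is correct and follows essentially the same two-step strategy as the paper: diagonal testing to obtain coercivity of $|||\bv_h,0|||_{B,h}$ (absorbing $2m(\bv_h,\bv_h)$ via the trace inequality for $\gamma$ large), then testing with $\bw_h=(\mu+\sigma)^{-1}\srf_y$ to recover the pressure, and finally combining with $q_h = y_h+\nabla\cdot\bv_h$.

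One point deserves sharpening. In the pressure step you propose to bound all three boundary terms $m(\bw_h,\bv_h)$, $m(\bv_h,\bw_h)$, $s(\bv_h,\bw_h)$ by factors of $\|\mu^{1/2}\nabla\bv_h\|_\Omega$ or $|\bv_h|_s$ against $(\mu+\sigma)^{-1/2}\|y_h\|_\Omega$. The paper instead observes that since $\srf_y\in V_h^0$ has \emph{zero full trace} (not merely zero normal trace), one has $\bft\bw_h|_{\partial\Omega}=0$, so $m(\bv_h,\bw_h)=s(\bv_h,\bw_h)=0$ outright, and only $m(\bw_h,\bv_h)\le C_T\gamma^{-1/2}\|\mu^{1/2}\nabla\bw_h\|_\Omega\,|\bv_h|_s$ must be estimated. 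This matters: if $\bft\bw_h$ were genuinely nonzero, your claimed bound on $s(\bv_h,\bw_h)=|\bv_h|_s\,|\bw_h|_s$ would require controlling $|\bw_h|_s=(\gamma\mu/h)^{1/2}\|\bft\bw_h\|_{\partial\Omega}$, which via the discrete trace inequality picks up an $h^{-1}$ factor and cannot be absorbed. So your argument is saved precisely because those two terms vanish; make that explicit rather than folding them into a generic ``each is bounded'' statement.
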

\begin{proof}
First we take $\bw_h = \bv_h$ and $q_h = y_h$ to obtain
\[
\|\mu^{\frac12}  \nabla \bv_h\|^2_\Omega + \|\sigma^{-\frac12}
\bv_h\|^2_{\Omega}+|\bv_h|_s^2 -2 m(\bv_h,\bv_h) = A_{B,h}[(\bv_h, y_h),(\bw_h,q_h)].
\]
Following the same arguments as in Lemma \ref{eq:nit_trace} we see
that
\[
 m(\bv_h,\bv_h) \leq \epsilon |||\bv_h,0|||^2_B + C^2_T \epsilon^{-1} \gamma^{-1} |\bv_h|_s^2.
\]
It follows that taking  $0<\epsilon<1/2$ and $\gamma \ge  4 C^2_T/\epsilon$ we have
\[
\frac12 (1-2 \epsilon) ||| \bv_h,0|||_{B,h}^2 \leq A_{B,h}[(\bv_h, y_h),(\bw_h,q_h)].
\]
Then we chose $\bw_h = (\mu+\sigma)^{-1} \srf_y$, with $\srf_y$ as in \eqref{eq:vp_func}. 
\begin{multline*}
 (\mu+\sigma)^{-1} \|y_h\|_{\Omega}^2 = A_B[(\bv_h, y_h),(\bw_h,0)] -
 (\mu \nabla \bv_h,\nabla \bw_h)_{\Omega}-(\sigma
 \bv_h,\bw_h)_{\Omega}\\+m(\bw_h,\bv_h) +m(\bv_h,\bw_h)-\gamma (\mu/h\,
 \bft \bv_h,\bft \bw_h)_{\partial \Omega}.
\end{multline*}
The second and the third terms on the right hand side are handled as
in Proposition \ref{prop:infsup}.
\begin{multline}
(\mu \nabla \bv_h,\nabla \bw_h)_{\Omega}+(\sigma
 \bv_h,\bw_h)_{\Omega} \leq C_0^{2} \|\mu^{\frac12}\nabla
 \bv_h\|_\Omega^2+C_0^{2} \|\sigma^{\frac12}  \bv_h\|_\Omega^2 +
 \frac14 C_0^{-2}  (\mu +
 \sigma) \|\bw_h\|_{H^1(\Omega)}^2\\
\leq C_0^2 \|\mu^{\frac12}\nabla  \bv_h\|_\Omega^2+C_0^2\|\sigma^{\frac12}
\bv_h\|_\Omega^2 + \frac14 (\mu +
 \sigma)^{-1} \|y_h\|_{H^1(\Omega)}^2
\end{multline}

Using the Cauchy-Schwarz inequality,
the trace inequality \eqref{eq:trace2} and the fact that $\srf_y \in V_h^0$ we see that
\begin{multline*}
m(\bw_h,\bv_h) +m(\bv_h,\bw_h) -\gamma(\mu/h\,
 \bft \bv_h,\bft \bw_h)_{\partial \Omega} \leq C_T 
  \|\mu^{\frac12} \nabla\bw_h\|_\Omega \gamma^{-\frac12} |
  \bv_h|_s \\
\leq \frac14 (\sigma+\mu)^{-1} \|y_h\|^2_\Omega + (C_T C_0)^2 \gamma^{-1} |
  \bv_h|^2_s
\end{multline*}
Summing the above bounds 
it follows that,
\[
 \frac12 (\mu+\sigma)^{-1} \|y_h\|_{\Omega}^2 \leq A_B[(\bv_h,
 y_h),(\bw_h,0)] + C_0^2 |||\bv_h,0|||_{B,h}^2,
\]
where we used that $(C_T C_0)^2 \gamma^{-1} \leq C_0^2 \epsilon/4 \leq C_0^2$.
Taking $\bw_h = \bv_h+(2 C_0)^{-2} (\mu+\sigma)^{-1} \srf_y$ and
$q_h=y_h + \nabla \cdot \bv_h$ we deduce that
\begin{equation*}
\left(\frac{1}{4}- 2 \epsilon\right)|||\bv_h,0 |||^2_{B,h} + \frac{1}{8C_0^2} ||| 0,y_h |||^2_{B,h}
\leq A_B[(\bv_h, y_h),(\bw_h,q_h)].
\end{equation*}
Let now $\epsilon=\frac{1}{16}$ then for $\alpha = 1/8\min (1,
C_0^{-2}) >0$ there holds,
\[
\alpha ||| \bv_h,y_h |||^2_{B,h} \leq A_B[(\bv_h, y_h),(\bw_h,q_h)].
\]
To finish the proof note that, as before,
\begin{multline*}
||| \bw_h,q_h|||_{B,h} \leq  ||| \bv_h,y_h|||_{B,h} +  ||| (2 C_0)^{-2}
(\mu+\sigma)^{-1} \srf_y,0|||_{B,h} \\
\leq  ||| \bv_h,y_h|||_{B,h} +   (2
C_0)^{-2}\mu^{\frac12}(\mu+\sigma)^{-1} C_0 \|y_h\|_\Omega\leq C_B  ||| \bv_h,y_h|||_{B,h},
\end{multline*}
where $C_B$ is independent of $\mu$ and $\sigma$, but not of $C_0$.
The inequality then holds with $\alpha_B = \alpha/C_B$.
\end{proof}
Optimal a priori estimates follow using the stability of Proposition
\ref{prop:infsup_nit}, consistency and continuity.
\begin{prop}\label{prop:apriori_brinkman_nit}
Under the hypothesis of Proposition \ref{prop:infsup_nit},
let $(\bu,p) \in V^0_{\bn} \times Q$ be the solution to \eqref{eq:brinkman},
with either $\mu>0$ and $\sigma \ge 0$ or  $\mu \ge 0$ and $\sigma > 0$
and $(\bu_h,p_h) \in V^0_{\bn,h} \times Q_h$ the solution to \eqref{eq:FEM_brinkman}. Then there
holds
\[
|||\bu-\bu_h,p-p_h|||_{B,h} \leq C h (\mu^{\frac12}
|u|_{H^2(\Omega)}+ 
\sigma^{\frac12} |u|_{H^1(\Omega)}).
\]
\end{prop}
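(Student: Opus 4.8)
The plan is to follow the structure of the proof of Proposition~\ref{prop:apriori_brink}, replacing the coercivity argument there by the inf-sup stability of Proposition~\ref{prop:infsup_nit} and treating the additional Nitsche boundary terms exactly as in the elasticity estimate leading to~\eqref{eq:bound_approx}.

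First I would split the error with the triangle inequality. Writing $\be_h := \pi_h\bu - \bu_h \in V^0_{\bn,h}$ and $\eta_h := \pi_0 p - p_h \in Q_h$,
\[
|||\bu-\bu_h,p-p_h|||_{B,h} \le |||\bu - \pi_h\bu, 0|||_{B,h} + \|(\mu+\sigma)^{-\frac12}(p-\pi_0 p)\|_\Omega + |||\be_h, \eta_h|||_{B,h}.
\]
The first term is controlled by Lemma~\ref{lem:approx1}. The second is a pure pressure interpolation error, bounded by $C(\mu+\sigma)^{-\frac12}h|p|_{H^1(\Omega)}$; using the momentum equation $\nabla p = \ff + \mu\Delta\bu - \sigma\bu$ together with the regularity of the Brinkman problem, it is of order $Ch(\mu^{\frac12}|\bu|_{H^2(\Omega)} + \sigma^{\frac12}|\bu|_{H^1(\Omega)})$, as required. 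It then remains to estimate the discrete error $|||\be_h,\eta_h|||_{B,h}$.

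For that term I would invoke Proposition~\ref{prop:infsup_nit} and then Galerkin orthogonality, which holds because the Nitsche formulation~\eqref{eq:FEM_Brinkman_nitsche} is consistent — a consequence of $\bft\bu = 0$ on $\partial\Omega$ (when $\mu>0$), of $\bw_h\cdot\bn=0$ for $\bw_h\in V^0_{\bn,h}$, and of the $\mu$-factors in $m$ and $s$ (when $\mu=0$). This gives, for all admissible $(\bw_h,q_h)$, the identity $A_{B,h}[(\be_h,\eta_h),(\bw_h,q_h)] = -A_{B,h}[(\bu - \pi_h\bu, p - \pi_0 p),(\bw_h,q_h)]$. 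Expanding $A_{B,h}$ termwise, the pressure interpolation error drops out: $b(q_h,\bu-\pi_h\bu)=0$ by the Fortin commutation property $\nabla\cdot\pi_h\bu = \pi_0\nabla\cdot\bu$, and $b(p-\pi_0 p,\bw_h)=0$ since $\nabla\cdot\bw_h\in X_h$. The volume part $a_B(\bu-\pi_h\bu,\bw_h)$ is bounded by Cauchy--Schwarz by $|||\bu-\pi_h\bu,0|||_{B}\,|||\bw_h,q_h|||_{B}$; the Nitsche contributions are handled by bounding $m(\bw_h,\bu-\pi_h\bu)$ with the discrete trace inequality~\eqref{eq:trace2}, $m(\bu-\pi_h\bu,\bw_h)$ by $h^{\frac12}\|\mu^{\frac12}\nabla(\bu-\pi_h\bu)\|_{\partial\Omega}$ times $\|\mu^{\frac12}h^{-\frac12}\bft\bw_h\|_{\partial\Omega}=\gamma^{-\frac12}|\bw_h|_s$ (using $\bft\bw_h=\bw_h$ on $\partial\Omega$), and $s(\bu-\pi_h\bu,\bw_h)$ by $|\bu-\pi_h\bu|_s\,|\bw_h|_s$. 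Collecting these and using the stability bound yields
\[
|||\be_h,\eta_h|||_{B,h} \le C\Big(|||\bu-\pi_h\bu,0|||_{B,h} + h^{\frac12}\|\mu^{\frac12}\nabla(\bu-\pi_h\bu)\|_{\partial\Omega}\Big),
\]
and the boundary term is dispatched with the elementwise trace inequality~\eqref{eq:trace1} and the approximation properties~\eqref{eq:approx} of $\pi_h$ (noting that $\pi_h\bu$ is piecewise affine, so the second derivatives of $\bu-\pi_h\bu$ reduce to those of $\bu$), exactly as in~\eqref{eq:bound_approx}, giving $h^{\frac12}\|\mu^{\frac12}\nabla(\bu-\pi_h\bu)\|_{\partial\Omega}\le Ch\mu^{\frac12}|\bu|_{H^2(\Omega)}$; together with Lemma~\ref{lem:approx1} this closes the estimate.

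\textbf{The main obstacle.} The delicate step is the non-symmetric Nitsche term $m(\bu-\pi_h\bu,\bw_h)$, which involves $\nabla(\bu-\pi_h\bu)$ restricted to $\partial\Omega$ — a quantity not measured by $|||\cdot,\cdot|||_{B,h}$. Controlling it requires passing through the elementwise trace inequality~\eqref{eq:trace1}, relies on $\pi_h$ having the full second-order approximation property, and — most importantly for the stated claim — demands careful bookkeeping of the powers of $\mu$ so that the constant stays uniform as $\mu\to0$, in which limit this term vanishes identically and one recovers the Darcy estimate on $V^0_{\bn,h}$. A secondary point is combining the pressure interpolation error with the $(\mu+\sigma)^{-\frac12}$ weight so that the right-hand side reduces to the two velocity terms in the statement, which relies on the momentum equation and the regularity of the Brinkman solution.
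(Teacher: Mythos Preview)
Your approach mirrors the paper's proof almost step for step: triangle inequality, the inf-sup bound of Proposition~\ref{prop:infsup_nit}, Galerkin orthogonality, the Fortin/$\pi_0$ commutation to eliminate the two $b$-terms, and the trace-inequality treatment of $m(\bw_h,\cdot)$, $m(\cdot,\bw_h)$ and $s(\cdot,\cdot)$ culminating in an estimate of $h^{1/2}\|\mu^{1/2}\nabla(\bu-\pi_h\bu)\|_{\partial\Omega}$ exactly as in~\eqref{eq:bound_approx}. The one place you go beyond the paper is the explicit handling of the pressure interpolation piece $(\mu+\sigma)^{-1/2}\|p-\pi_0 p\|_\Omega$: the paper's proof writes the triangle inequality only for $|||\bu-\bu_h,0|||_{B,h}$ and never isolates this term. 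Your attempt to absorb it into the stated velocity bound via the momentum equation is not quite clean, since $\nabla p=\ff+\mu\Delta\bu-\sigma\bu$ leaves a residual $(\mu+\sigma)^{-1/2}\|\ff\|_\Omega$ that does not in general collapse into $\mu^{1/2}|\bu|_{H^2(\Omega)}+\sigma^{1/2}|\bu|_{H^1(\Omega)}$ (take $\ff$ a pure gradient, so $\bu\equiv 0$ but $|p|_{H^1}\neq 0$). This is really a wrinkle in the statement rather than in your strategy---the clean formulation would carry $\pi_0 p-p_h$ on the left, as in Proposition~\ref{prop:apriori_brink}, or add a $(\mu+\sigma)^{-1/2}|p|_{H^1(\Omega)}$ term on the right---and the paper's own proof does not resolve it either.
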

\begin{proof}
We introduce, as before, the discrete errors $\be_h := \bu_h - \pi_h
\bu$ and $\eta_h = \pi_0 p - p_h$. Using the triangle inequality we
see that
\[
|||\bu-\bu_h,0|||_{B,h} \leq |||\bu-\pi_h \bu,0|||_{B,h} + |||\be_h,\eta_h|||_{B,h}.
\]
For the second term in the right hand side we apply the stability of
Proposition \eqref{prop:infsup_nit} to obtain
\[
\alpha_B |||\be_h,\eta_h|||_{B,h} \leq \sup_{\bw_h,q_h \in (V_h\setminus 0)
  \times (Q_h \setminus 0)} \frac{A_{B,h}[(\be_h, \eta_h),(\bw_h,q_h)]}{|||\bw_h,q_h|||_{B,h}}.
\]
using Galerkin orthogonality
we have
\[
A_{B,h}[(\be_h, \eta_h),(\bw_h,q_h)] = A_{B,h}[(\bu-\pi_h \bu,p-\pi_0 p),(\bw_h,q_h)].
\]
The form $A_B$ is handled as in Proposition
\ref{prop:apriori_brink}. Using the orthogonality properties of the
$\pi_h$ and $\pi_0$ we see that
\[
A_{B,h}[(\bu-\pi_h \bu,p-\pi_0 p),(\bw_h,q_h)] \leq |||\bu-\pi_h \bu,\eta_h|||_{B,h} |||\be_h,0|||_{B,h} 
+|m(\bw_h,\bu-\pi_h \bu)|+|m(\bu-\pi_h \bu,\bw_h)|
\]
For the Nitsche terms we see that using the Cauchy-Schwarz inequality
followed by the trace inequality \eqref{eq:trace1} and the
approximation of Lemma \ref{lem:approx1}
\[
m(\bw_h,\bu-\pi_h \bu) +m(\bu-\pi_h \bu,\bw_h) \leq C ||| \bw_h,0|||_{B,h} \mu^{\frac12} h |\bu|_{H^2(\Omega)}
\]
where we also used an argument similar to \eqref{eq:bound_approx} to
obtain the bound $\|\mu^{\frac12} \nabla (\bu-\pi_h \bu)\|_{\partial
  \Omega} \leq C  \mu^{\frac12} h |\bu|_{H^2(\Omega)}$.
The stabilization term is bounded by applying the Cauchy-Schwarz inequality 
\[
s(\bu-\pi_h \bu, \bw_h) \leq |\bu-\pi_h \bu|_s | \bw_h|_s \leq |||\bu-\pi_h \bu, 0|||_{B,h} |||\bw_h, 0|||_{B,h} .
\]
We conclude that 
\[
\alpha_B |||\be_h,\eta_h|||_{B,h} \leq C (|||\bu-\pi_h \bu, 0|||_{B,h} + \mu^{\frac12} h |\bu|_{H^2(\Omega)} ).
\]
Applying the
approximation properties of the projection $\pi_h$ from Lemma \ref{lem:approx1}
now proves the claim.
\end{proof}
\subsection{Superconvergence of the primal variable in the Darcy
  limit}
Here we will prove that in the Darcy limit, the pressure variable
converges to $\pi_0 p$ with the rate $O(h^2)$ on convex domains. To
fix the ideas we consider \eqref{eq:brinkman} with $\sigma=1$ and 
$\mu=0$ and the boundary condition \eqref{eq:no_inflow}. We let
$(\bu_h,p_h)$ denote the solution of \eqref{eq:FEM_Brinkman_nitsche}. Not
that in this case we solve the problem $-\Delta p = g$ with $\nabla p \cdot \bn\vert_{\partial \Omega} =
0$. The following superconvergence result shows that we can use postprocessing to obtain a piecewise affine approximation of $p$ that has
optimal convergence in $H^1$ and $L^2$ norms.
\begin{prop}
Let $\Omega$ be convex. 
The following bound holds
\[
\|\pi_0 p - p_h\|_\Omega \leq C  ( h^2 \|g\|_\Omega + h\|g - \pi_0 g\|_\Omega).
\]
\end{prop}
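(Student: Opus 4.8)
The plan is an Aubin--Nitsche duality argument, exploiting that for $\mu=0$ the method \eqref{eq:FEM_Brinkman_nitsche} reduces to a genuine $H(\mathrm{div})$-type mixed discretization of $-\Delta p=g$ with $\nabla p\cdot\bn=0$: the Nitsche forms $m$ and $s$ carry a factor $\mu$ and vanish, leaving (with $\sigma=1$, $\mu=0$ and, following the setup, $\ff=0$) the saddle-point system $(\bu_h,\bv_h)_\Omega-b(p_h,\bv_h)+b(q_h,\bu_h)=(g,q_h)_\Omega$ on $V^0_{\bn,h}\times Q_h$, and the analogous identity at the continuous level on $V^0_{\bn}\times Q$. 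First I would record two structural facts. Testing the second equation with $q_h\in Q_h$, and using that $\nabla\cdot\bu_h\in X_h$ by construction and has zero mean (since $\bu_h\cdot\bn=0$ on $\partial\Omega$), gives $\nabla\cdot\bu_h=\pi_0 g$, hence $\nabla\cdot(\bu-\bu_h)=g-\pi_0 g$. Subtracting the discrete formulation from the continuous one yields the Galerkin orthogonality $(\bu-\bu_h,\bv_h)_\Omega=b(p-p_h,\bv_h)$ for all $\bv_h\in V^0_{\bn,h}$.

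Next I would set up the dual problem. Put $\eta_h:=\pi_0 p-p_h\in Q_h$ and let $\phi\in H^1(\Omega)\cap Q$ solve $-\Delta\phi=\eta_h$ in $\Omega$ with $\nabla\phi\cdot\bn=0$ on $\partial\Omega$; since $\Omega$ is convex, $\phi\in H^2(\Omega)$ with $\|\phi\|_{H^2(\Omega)}\le C\|\eta_h\|_\Omega$, cf. \cite{Gris11}. Setting $\boldsymbol\psi:=-\nabla\phi$ we have $\boldsymbol\psi\in V^0_{\bn}$, $\nabla\cdot\boldsymbol\psi=\eta_h$, $\boldsymbol\psi\cdot\bn=0$ on $\partial\Omega$ and $\|\boldsymbol\psi\|_{H^1(\Omega)}\le C\|\eta_h\|_\Omega$. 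Then
\[
\|\eta_h\|_\Omega^2=(\eta_h,\nabla\cdot\boldsymbol\psi)_\Omega=(\eta_h,\nabla\cdot\pi_h\boldsymbol\psi)_\Omega=(p-p_h,\nabla\cdot\pi_h\boldsymbol\psi)_\Omega=(\bu-\bu_h,\pi_h\boldsymbol\psi)_\Omega,
\]
where the second equality uses $\eta_h\in X_h$ together with the commuting identity $\nabla\cdot\pi_h\boldsymbol\psi=\pi_0\nabla\cdot\boldsymbol\psi$, the third uses $(\pi_0 p-p,x_h)_\Omega=0$ for $x_h\in X_h$, and the last is the Galerkin orthogonality with the admissible test function $\bv_h=\pi_h\boldsymbol\psi\in V^0_{\bn,h}$.

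To close, I would bound $(\bu-\bu_h,\pi_h\boldsymbol\psi)_\Omega$ by writing $\pi_h\boldsymbol\psi=\boldsymbol\psi+(\pi_h\boldsymbol\psi-\boldsymbol\psi)$. The interpolation remainder contributes at most $C h\,\|\bu-\bu_h\|_\Omega\,\|\eta_h\|_\Omega$ by \eqref{eq:approx}. For the leading part I would integrate by parts: since $\boldsymbol\psi=-\nabla\phi$ and $(\bu-\bu_h)\cdot\bn=0$ on $\partial\Omega$, the boundary term cancels and
\[
(\bu-\bu_h,\boldsymbol\psi)_\Omega=(\nabla\cdot(\bu-\bu_h),\phi)_\Omega=(g-\pi_0 g,\phi-\pi_0\phi)_\Omega\le C h\,\|g-\pi_0 g\|_\Omega\,\|\eta_h\|_\Omega,
\]
using $\nabla\cdot(\bu-\bu_h)=g-\pi_0 g\in X_h$ to insert $\pi_0\phi$ and then the approximation property of $\pi_0$. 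Dividing by $\|\eta_h\|_\Omega$ gives $\|\eta_h\|_\Omega\le C h(\|\bu-\bu_h\|_\Omega+\|g-\pi_0 g\|_\Omega)$. It then remains to invoke Proposition \ref{prop:apriori_brinkman_nit} with $\mu=0$, $\sigma=1$, which yields $\|\bu-\bu_h\|_\Omega\le C h\,|\bu|_{H^1(\Omega)}$, together with $\bu=-\nabla p$ and the elliptic regularity estimate $|\bu|_{H^1(\Omega)}=|p|_{H^2(\Omega)}\le C\|g\|_\Omega$ on the convex domain.

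The step I expect to be the main obstacle is the cancellation of the boundary term $\langle(\bu-\bu_h)\cdot\bn,\phi\rangle_{\partial\Omega}$ in the integration by parts: it relies on the non-penetration condition being imposed \emph{exactly} on both the exact and the discrete velocity (i.e. $\bu\in V^0_{\bn}$, $\bu_h\in V^0_{\bn,h}$), and on the dual flux inheriting $\boldsymbol\psi\cdot\bn=0$ from the Neumann condition, which is precisely what makes $\pi_h\boldsymbol\psi$ an admissible discrete test function. A secondary technical point is that one needs $\boldsymbol\psi\in[H^1(\Omega)]^d$, not merely $\boldsymbol\psi\in H(\mathrm{div};\Omega)$, in order to apply the Fortin interpolant $\pi_h$ and its estimate \eqref{eq:approx}; this is exactly where the convexity of $\Omega$ --- via $H^2$ elliptic regularity for the Neumann problem --- enters.
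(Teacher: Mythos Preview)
Your proof is correct and follows essentially the same duality argument as the paper: set up the Neumann dual problem, replace its gradient by the Fortin interpolant via the commuting property, split into an interpolation remainder bounded by $Ch\|\bu-\bu_h\|_\Omega\|\eta_h\|_\Omega$ and a term handled by integration by parts together with $\nabla\cdot(\bu-\bu_h)=g-\pi_0 g$. One small slip: in the last displayed chain you justify inserting $\pi_0\phi$ by writing ``$g-\pi_0 g\in X_h$'', but you mean $g-\pi_0 g\perp X_h$ (equivalently, the definition of $\pi_0$); the manipulation itself is correct.
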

\begin{proof}
Let $\varphi$ be the solution of the problem
\begin{equation}\label{eq:dual}
\begin{aligned} 
-\Delta \varphi &= \pi_0 p - p_h \\
\nabla \varphi \cdot \bn & = 0.
\end{aligned} 
\end{equation}
By the convexity assumption on $\Omega$ there holds by elliptic regularity
\begin{equation}\label{eq:reg}
\|p\|_{H^2(\Omega)} \leq C \|g\|_\Omega \mbox{ and } \|\varphi\|_{H^2(\Omega)} \leq C \|\pi_0 p - p_h\|_\Omega.
\end{equation}
By the definition of \eqref{eq:dual} we have
\[
\|\pi_0 p - p_h\|_\Omega^2 = (\pi_0 p - p_h, \Delta \varphi)_\Omega = (p
- p_h, \nabla \cdot \pi_h \nabla \varphi)_\Omega.
\]
By the definition of \eqref{eq:FEM_Brinkman_nitsche} there holds
\[
(p
- p_h, \nabla \cdot \pi_h \nabla \varphi)_\Omega = (\bu - \bu_h, \pi_h
\nabla \varphi)_\Omega .
\]
Now we add and subtract $\nabla \varphi$ in the right hand side to
obtain
\[
 (\bu - \bu_h, \pi_h
\nabla \varphi - \nabla \varphi )_\Omega +(\bu - \bu_h, 
\nabla \varphi)_\Omega= I + II.
\]
Using the Cauchy-Schwarz inequality and the interpolation properties of $\pi_h$ we see that
\[
I \leq \|\bu - \bu_h\|_\Omega C h |\varphi|_{H^2(\Omega)}.
\]
For term $II$ we integrate by parts and use once again the definition
of \eqref{eq:FEM_Brinkman_nitsche}.
\begin{equation*}
II \leq (\nabla \cdot (\bu - \bu_h), \varphi - \pi_0 \varphi)_\Omega
 = (g - \pi_0 g, \varphi - \pi_0 \varphi)_\Omega
\leq \|g - \pi_0 g\|_\Omega C h |\varphi|_{H^1(\Omega)}.
\end{equation*}
Collecting the above inequalities we see that using the error estimate
\eqref{prop:apriori_brinkman_nit} and \eqref{eq:reg} there holds
\begin{multline*}
\|\pi_0 p - p_h\|_\Omega^2 \leq C h (|||\bu - \bu_h,0|||_B + \|g -
\pi_0 g\|_\Omega) \|\varphi\|_{H^2(\Omega)}\\
\leq C ( h^2 \|g\|_\Omega + h\|g -
\pi_0 g\|_\Omega) \|\pi_0 p - p_h\|_\Omega.
\end{multline*}
This concludes the proof.
\end{proof}
\subsection{Further remarks on using Nitsche's method for the imposition
  of slip conditions}
We will here discuss the imposition of the normal component of the
velocity using Nitsche's method in the context of Brinkman's
problems with slip boundary conditions. This is useful in cases where
the domain is not polyhedral. For simplicity we consider pure slip
boundary conditions
\begin{equation}\label{eq:slip1}
\bu \cdot \bn = 0\mbox{ and } \bft \,\sigma(\bu,p) \bn = 0 \mbox{ on } \partial \Omega,
\end{equation}
where $\sigma(\bu,p) := \mu \nabla \bu - p \mathbb{I}$. This problem
is well-posed in the space $V^0_{\bn}$.

This time the Nitsche formulation 
takes the form: Find $(\bu_h, p_h) \in V_h \times Q_h$ such that
\begin{equation}\label{eq:FEM_elasticity_nit}
A_{B,h}(\bu_h,\bv_h) = l_B(\bv_h,q_h) 
\end{equation}
with
\begin{equation}\label{def:A_B}
A_{B,h}(\bu_h,\bv_h):=A_B(\bu_h,\bv_h) - c((\bu_h,p_h),\bv_h)-c((\bv_h,0),\bu_h)+s(\bu_h,\bv_h)
\end{equation}
where
\[
c((\bu_hp_h),\bv_h):= (\bn \cdot  \bsigma(\bu_h,p_h)  \bn, \bv_h \cdot \bn)_{\partial \Omega}
\]
and
\[
s(\bu_h,\bv_h) :=(\gamma/h (\mu + \sigma) ~\bu_h \cdot \bn, ~ \bv_h \cdot \bn)_{\partial
  \Omega}.
\]
Observe that to avoid perturbing the mass conservation the pressure
test function is absent in the second  $c$-form of the definition
\eqref{def:A_B}. This destroys the anti-symmetry of the pressure
velocity coupling in the boundary terms. One may however prove that
inf-sup stability of the norm defined in \eqref{def:3B} still holds for $h$ small enough.
Proceeding as in the proof of Proposition 6.3, using, inf-sup
stability, Galerkin orthogonality and continuity, one may then prove the
following a
priori error estimate.

\begin{prop}\label{prop:apriori_brinkman_nitb}
Under the hypothesis of Proposition \ref{prop:infsup_nit},
let $(\bu,p) \in V \times Q$ be the solution to \eqref{eq:brinkman},
with either $\mu>0$ and $\sigma \ge 0$ or  $\mu \ge 0$ and $\sigma > 0$
and $(\bu_h,p_h) \in V_h \times Q_h$ the solution to \eqref{eq:FEM_brinkman}. Then there
holds
\[
|||\bu-\bu_h,p-p_h|||_{B,h} \leq C h (\mu^{\frac12}|\bu|_{H^2(\Omega)}+ 
\sigma^{\frac12} |\bu|_{H^1(\Omega)}+
\gamma^{-\frac12} (\sigma+\mu)^{-\frac12} |p|_{H^1(\Omega)}).
\]
\end{prop}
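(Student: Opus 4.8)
The plan is to follow the proof of Proposition~\ref{prop:apriori_brinkman_nit} essentially line by line; the two new features are that \eqref{eq:FEM_elasticity_nit} is no longer adjoint-consistent, so one must rely genuinely on the perturbed inf-sup bound in the norm \eqref{def:3B} (which, as announced, holds only for $h$ below a threshold), and that the pressure now survives in the boundary term $c((\bu_h,p_h),\bv_h)$, which is what produces the extra contribution $\gamma^{-\frac12}(\mu+\sigma)^{-\frac12}|p|_{H^1(\Omega)}$.

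First I would set $\be_h := \bu_h - \pi_h\bu$ and $\eta_h := \pi_0 p - p_h$ and split by the triangle inequality,
\[
|||\bu-\bu_h, p-p_h|||_{B,h} \leq |||\bu-\pi_h\bu, p-\pi_0 p|||_{B,h} + |||\be_h, \eta_h|||_{B,h},
\]
the first term being controlled by Lemma~\ref{lem:approx1} and the standard $\pi_0$-estimates $\|p-\pi_0 p\|_\Omega\le Ch|p|_{H^1(\Omega)}$, $h^{\frac12}\|p-\pi_0 p\|_{\partial\Omega}\le Ch|p|_{H^1(\Omega)}$. For the second term one first needs consistency of \eqref{eq:FEM_elasticity_nit}: testing \eqref{eq:brinkman} against $(\bv_h,q_h)$ and integrating $a_B$ by parts produces $(\bsigma(\bu,p)\bn,\bv_h)_{\partial\Omega}$, whose normal part is $c((\bu,p),\bv_h)$ and whose tangential part vanishes by \eqref{eq:slip1}, while the dropped terms $c((\bv_h,0),\bu)$ and $s(\bu,\bv_h)$ vanish because $\bu\cdot\bn=0$. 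Galerkin orthogonality then gives $A_{B,h}[(\be_h,\eta_h),(\bw_h,q_h)] = A_{B,h}[(\bu-\pi_h\bu, p-\pi_0 p),(\bw_h,q_h)]$, and Proposition~\ref{prop:infsup_nit} (in its version for \eqref{def:3B}) reduces the task to bounding this right-hand side.

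The termwise estimate of $A_{B,h}[(\bu-\pi_h\bu, p-\pi_0 p),(\bw_h,q_h)]$ then proceeds as in Propositions~\ref{prop:apriori_brink}--\ref{prop:apriori_brinkman_nit}: the commuting property of $\pi_h$ gives $b(q_h,\bu-\pi_h\bu)=0$, the $L^2$-orthogonality of $\pi_0$ together with $\nabla\cdot\bw_h\in X_h$ gives $b(p-\pi_0 p,\bw_h)=0$, and $a_B(\bu-\pi_h\bu,\bw_h)$ is bounded by $|||\bu-\pi_h\bu,0|||_{B,h}|||\bw_h,q_h|||_{B,h}$ by Cauchy--Schwarz. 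The viscous part of $c((\bu-\pi_h\bu,p-\pi_0 p),\bw_h)$ and the non-symmetrised term $c((\bw_h,0),\bu-\pi_h\bu)$ are each handled with the trace inequalities \eqref{eq:trace1}--\eqref{eq:trace2} and the approximation \eqref{eq:approx} to give $O(h\mu^{\frac12}|\bu|_{H^2(\Omega)})$, while $s(\bu-\pi_h\bu,\bw_h)\le|\bu-\pi_h\bu|_s|\bw_h|_s$ is absorbed into $|||\bu-\pi_h\bu,0|||_{B,h}$. The only genuinely new term is the pressure part of the first $c$-form, $(p-\pi_0 p,\bw_h\cdot\bn)_{\partial\Omega}$; estimating $\|p-\pi_0 p\|_{\partial\Omega}\le Ch^{\frac12}|p|_{H^1(\Omega)}$ and $\|\bw_h\cdot\bn\|_{\partial\Omega}\le (h/(\gamma(\mu+\sigma)))^{\frac12}|\bw_h|_s$ yields precisely $C\gamma^{-\frac12}(\mu+\sigma)^{-\frac12}h|p|_{H^1(\Omega)}|\bw_h|_s$. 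Collecting the bounds and applying Lemma~\ref{lem:approx1} once more gives the stated estimate.

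I expect the main obstacle to be twofold. First, the loss of adjoint consistency means there is no clean coercivity argument as in Proposition~\ref{prop:apriori_brinkman_nit}; everything rests on the perturbed inf-sup constant of Proposition~\ref{prop:infsup_nit}, whose proof must absorb the asymmetric $c$-form and therefore requires $h$ small. Second, the asymmetric term $c((\bw_h,0),\bu-\pi_h\bu)$ must be kept under control uniformly as $\mu\to 0$: it vanishes identically when $\mu=0$, and for $\mu>0$ the inverse trace inequality \eqref{eq:trace2} applied to $\nabla\bw_h$, combined with the $O(h^{3/2})$ approximation of $(\bu-\pi_h\bu)\cdot\bn$ on $\partial\Omega$, keeps it at the order $h\mu^{\frac12}|\bu|_{H^2(\Omega)}$. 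Verifying that all the $\mu$- and $\sigma$-weights then reassemble into exactly the combination stated --- with constants independent of $\mu$ and $\sigma$, but allowed to depend on $\gamma$ and $C_0$ --- is the remaining, purely bookkeeping, difficulty.
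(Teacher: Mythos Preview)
Your proposal is correct and follows essentially the same route the paper indicates: the paper only sketches this proof, saying to proceed as in Proposition~\ref{prop:apriori_brinkman_nit} via inf-sup stability, Galerkin orthogonality and continuity, and singles out in the subsequent remark the term $(p-\pi_0 p,\bw_h\cdot\bn)_{\partial\Omega}$ as the source of the extra $\gamma^{-\frac12}(\sigma+\mu)^{-\frac12}|p|_{H^1(\Omega)}$ contribution, exactly as you do. Your identification of the two delicate points (the perturbed inf-sup needed for the asymmetric form, and the uniform-in-$\mu$ control of $c((\bw_h,0),\bu-\pi_h\bu)$) is also consistent with the paper's brief discussion.
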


\begin{rem}
We observe that the error estimate of Proposition
\ref{prop:apriori_brinkman_nitb} is less robust than that of
Proposition \ref{prop:apriori_brink}, since in the former the
pressure appears in the right hand side. This is due to the appearance
of a term $(p - \pi_0 p, \bw_h \cdot \bn)_{\partial \Omega}$ after
application of Galerkin orthogonality. This term can
not be eliminated through the choice of $\pi_0$, since this
approximation already has been fixed by imposing orthogonality on the
bulk of each element.
Note however that under our
assumptions either $\mu$ or $\sigma$ must be strictly positive
and therefore the constant can not degenerate. It can also be made as
small as desired by choosing the penalty parameter $\gamma$ large. Moreover, in the Darcy limit
$\sigma^{\frac12} \bu \sim \sigma^{-\frac12} \nabla p$ and therefore
the term $\sigma^{\frac12} |\bu|_{H^1(\Omega)} \sim \sigma^{-\frac12}
|\nabla p|_{H^1(\Omega)}  >> \sigma^{-\frac12}
|p|_{H^1(\Omega)} $ and it follows that the pressure contribution is
the lower order term. The limit where both $\mu$ and
$\sigma$ go to zero simultaneously is not physically relevant.
\end{rem}

\section{Numerical examples}\label{sec:numerics}

In this Section we provide some details on the practical implementation of the approximation and give numerical examples
of near incompressible elasticity, Stokes flow, Darcy flow, and
coupled Darcy-Stokes flow. For simplicity, we
  consistently use strong imposition of boundary conditions in the
  examples.

\subsection{Elasticity}\label{sec:numerics_elasticity}

We consider the well known Cook's membrane, which is a quadrilateral with corners at (0,0), (48,44), (48,60), and (0,44), in a condition of plane strain.
The quadrilateral is fixed, $\bu=(0,0)$, at $x=0$, has zero traction, $\bsigma(\bu)\cdot\bn=(0,0)$, on the upper and lower boundary, and $\bsigma(\bu)\cdot\bn = (0,1)$ (a vertical shearing load) at $x=48$. This particular choice of boundary traction and a Young's modulus of $E=200$, is
taken from \cite{ChCeCo15}. Cook's membrane is highly susceptible to locking in the incompressible limit for low order elements as we illustrate in Fig. \ref{fig:locking}, where we compare the present method to a standard piecewise linear approximation on the type I triangles in the same mesh. The standard method locks as $\nu\rightarrow 0.5$, whereas the present method is unaffected. The results compare well with those of \cite{ChCeCo15}. In Fig. \ref{fig:def} we show the mesh of macro triangles and the computed deformation obtained the present method.

\subsection{Stokes flow}\label{sec:numerics_stokes}

We consider a problem on the unit square $(0,1)\times (0,1)$ with exact solution 
\[
\bu =(20 x y^3, 5x^4-5y^4), \quad p=60 y x^2-20y^3-5
\]
 with $\ff =(0,0)$ and Dirichlet boundary conditions
given by the exact solution. Zero mean pressure is enforced by a Lagrange multiplier.

In Fig. \ref{fig:convstokes} we show the convergence obtained with our method. The meshsize is defined as $1/\sqrt{\text{NNO}}$, where $\text{NNO}$ is the number of nodes on the grid of macro triangles. The dashed lines have inclination 1:1 and 1:2. The discrete solution on one of the meshes in the sequence is shown in Fig. \ref{fig:solstokes}.

\subsection{Darcy flow}\label{sec:numerics_darcy}

We consider a problem from \cite{MaTaWi02} on the unit square $(0,1)\times (0,1)$ with exact solution 
\[
\bu =(-\pi\sin^2{(\pi x)}\sin{(2\pi y)},\pi\sin{(2\pi x)}\sin^2{(\pi y)}), \quad p=\sin{(\pi x)}-2/\pi 
\]
given by 
 \[
 \ff =\left(\pi(\cos{(\pi x)} - \sin^2{(\pi x)}\sin{(2\pi y)})), \pi\sin{(2\pi x}\sin^2{(\pi y)}\right),
 \] 
and Dirichlet boundary conditions $\bu\cdot\bn = 0$ on the boundary.
Zero mean pressure is again enforced by a Lagrange multiplier.

In Fig. \ref{fig:convdarcy} we show the convergence obtained with our method. The meshsize is defined as in the previous example, as are the dashed lines. 
The discrete solution on one of the meshes in the sequence is shown in Fig. \ref{fig:soldarcy}.

\subsection{Coupled Stokes--Brinkman flow}\label{sec:numerics_Brinkman}

In this Section we show two examples of coupled Stokes--Brinkman flow. The domain is $(0,2)\times (0,2)$ in both cases. In the first example we show normal coupling. The boundary conditions are ${\boldsymbol u} ={\bf 0}$ at $x= 0$ and $x=2$. We let $\mu = 1$ and $\sigma = 0$ for $y \leq 1$.
At $y > 1$ we choose $\sigma  = 1$ and decrease $\mu$. We use a right--hand side ${\boldsymbol f} = (0,100)$. In figs. \ref{normal1} and \ref{normal2} we show the streamlines for successively decreasing $\mu \in \{1,10^{-2},10^{-3},10^{-6}\}$ on an $80\times 80$ nodes uniform mesh. The flow tends to uniform in the upper part and has to make a turn from a parabolic profile in the lower part at $y=1$. 

The second example concerns tangential coupling. The domain and right--hand side are the same, but the boundary conditions are ${\boldsymbol u}\cdot{\boldsymbol n}$ at $x=0$ and ${\boldsymbol u}={\bf 0}$ at $x=2$. Here we take $\mu =100$, $\sigma=0$ for $x > 1$ and $\sigma = 10^3$ with decreasing $\mu$ for $x\leq 1$.
In Figs. \ref{profile1}--\ref{profile2} we show the velocity profiles at $y=1$ for $\mu \in \{10,1,10^{-1},10^{-2}\}$ computed on an $80\times 80$ 
nodes uniform mesh. Note the oscillations occurring for decreasing $\mu$, related to the forced tangential continuity which cannot be upheld as
$\mu/\sigma \rightarrow 0$. The remedy for this effect (which will occur in the limit also for the normal coupling example) is to release tangential continuity or invoke an interface law relaxing tangential continuity using a physically motivated model \cite{GaHiGi96}, or using a variant of Nitsche's method as described above, cf. also \cite{BH07}.


\newpage
\begin{figure}[h]
\begin{center}
\includegraphics[scale=0.1]{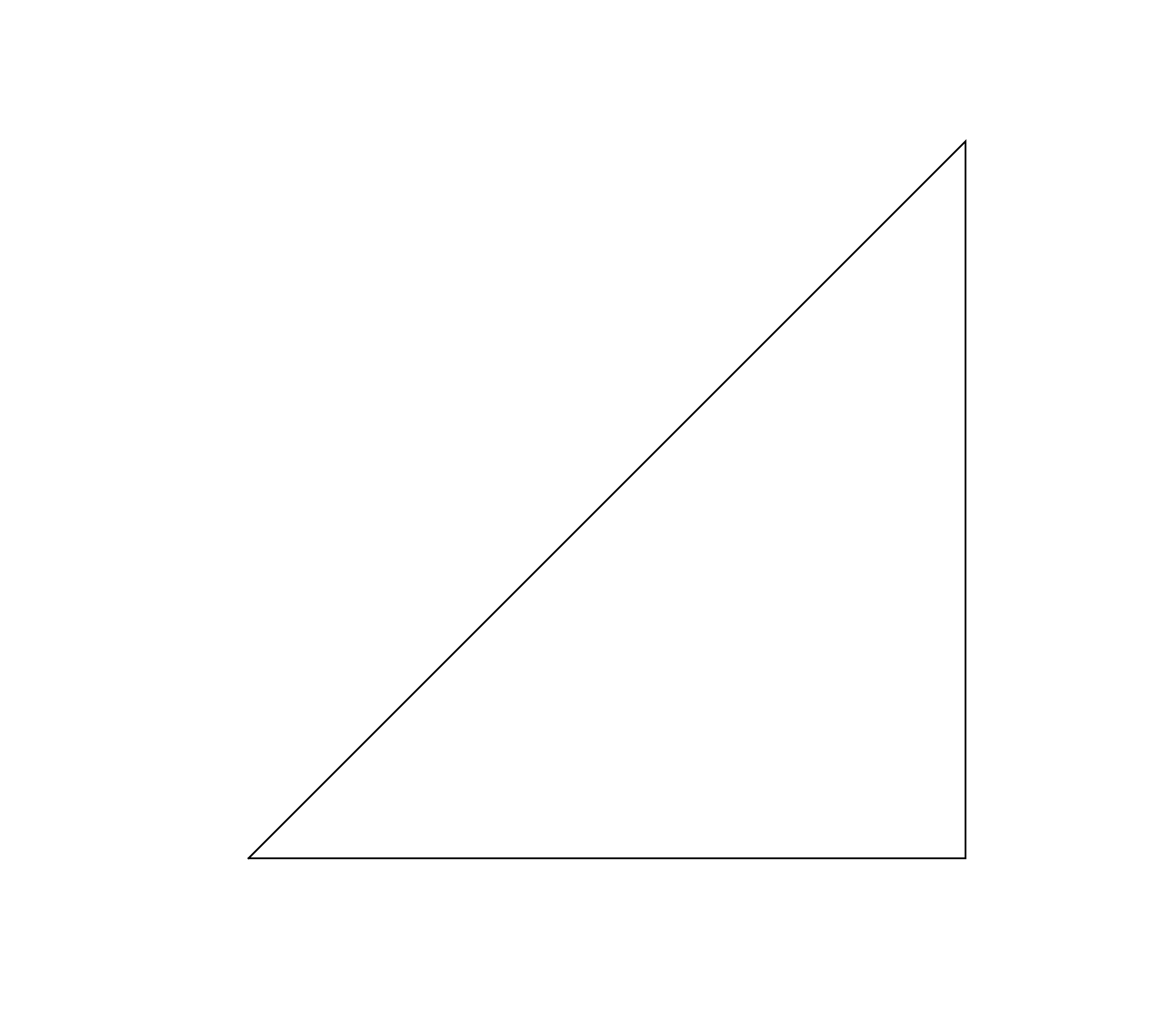}\includegraphics[scale=0.1]{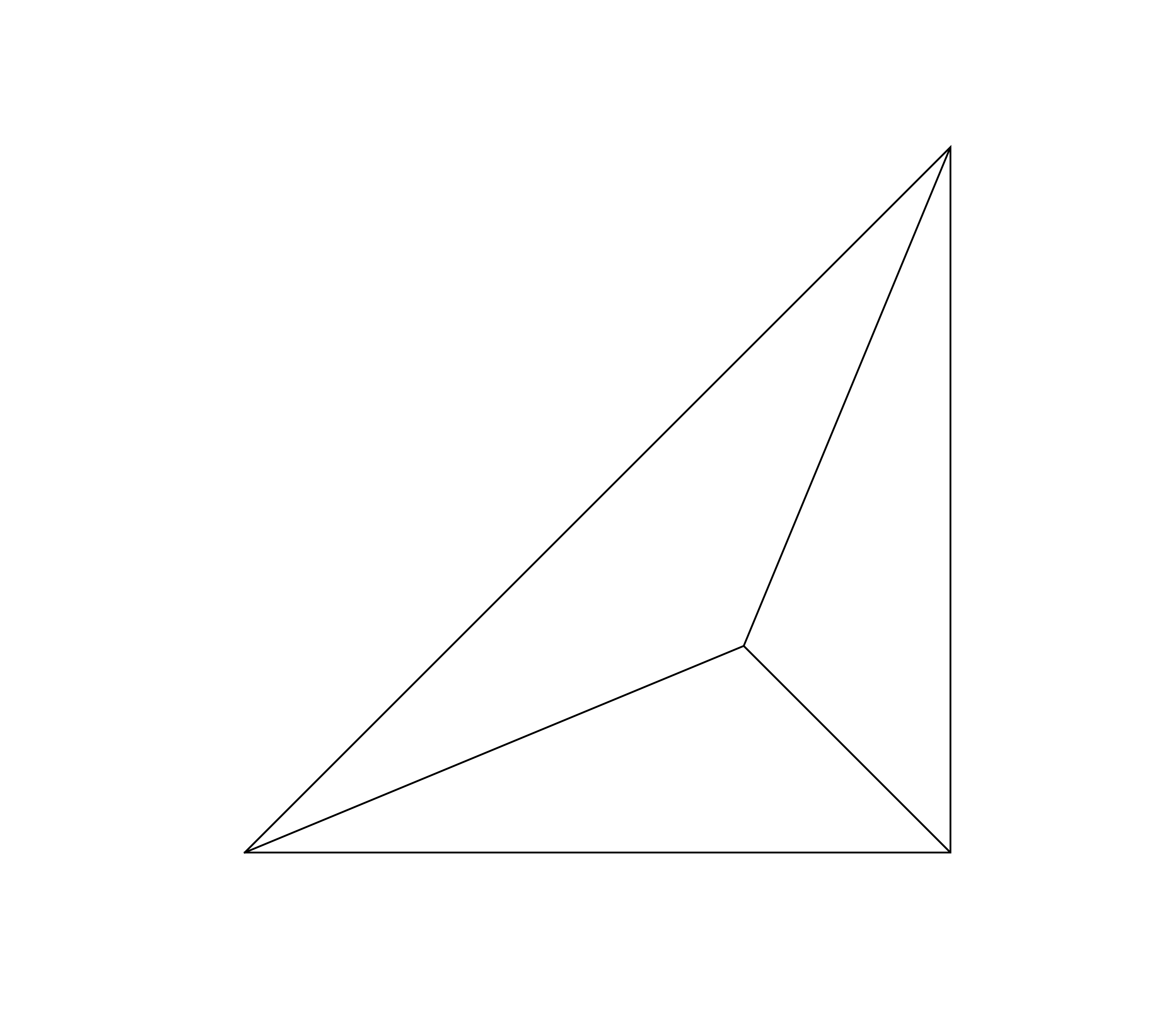}\includegraphics[scale=0.1]{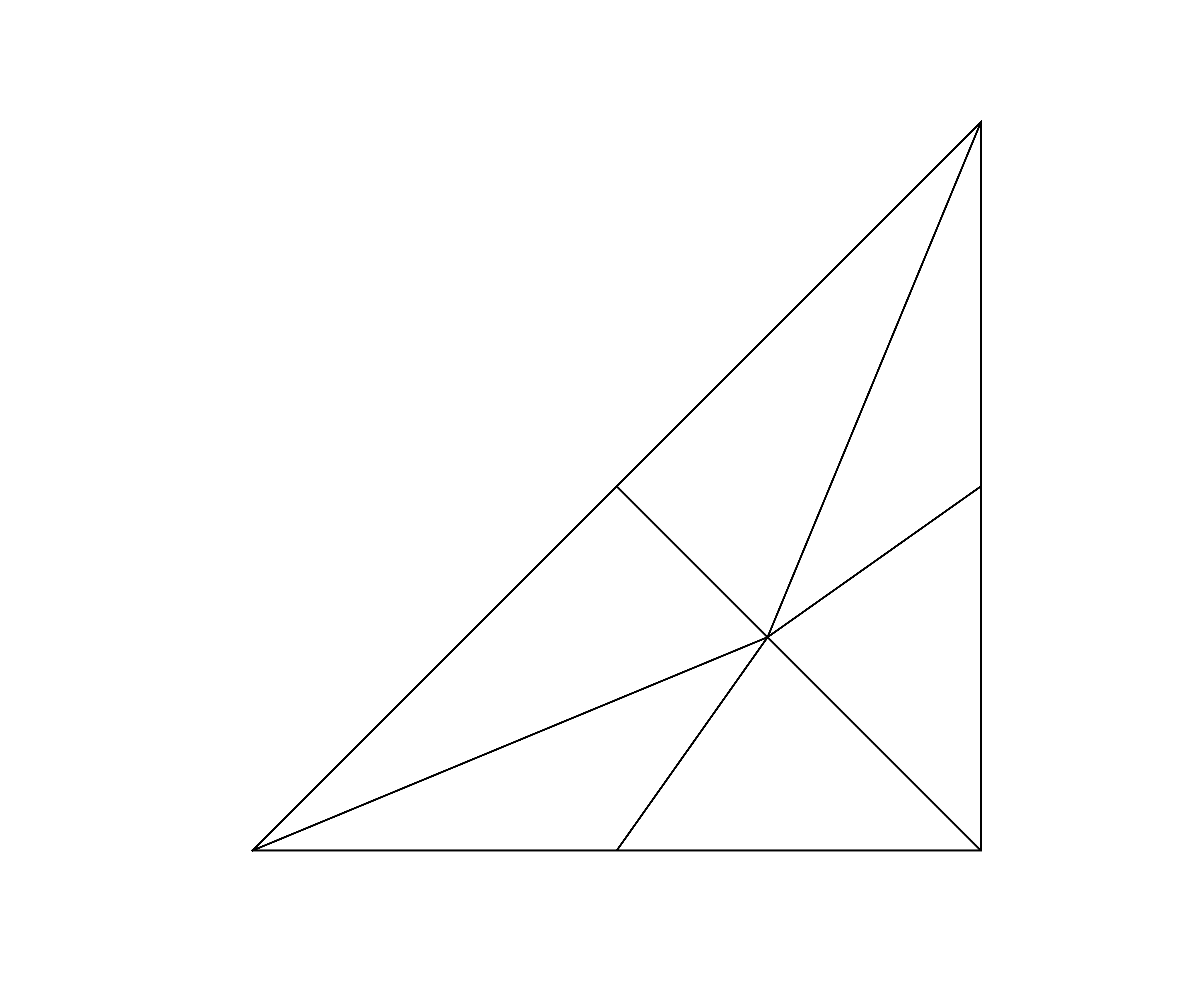}
\end{center}
\caption{The type I triangles (left) are once divided into type II triangles (middle) which are further divided to type III triangles (right).\label{fig:triangles}}
\end{figure}
\begin{figure}[h]
\begin{center}
\includegraphics[scale=0.3]{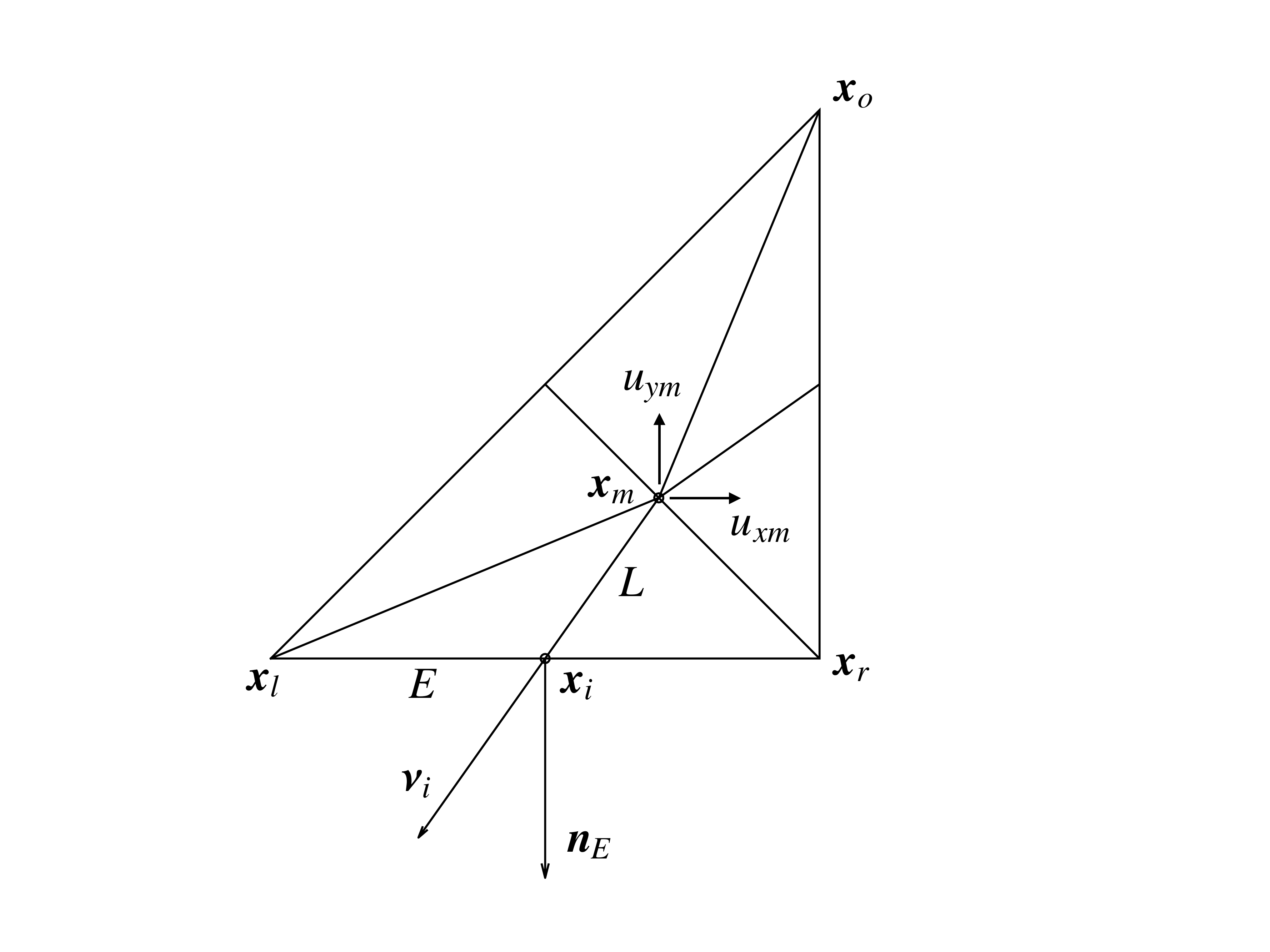}\end{center}
\caption{Quantities used to define the hierarchical bubble associated with edge $E$.\label{fig:bubbledef}}
\end{figure}
\begin{figure}[h]
\begin{center}
\includegraphics[scale=0.25]{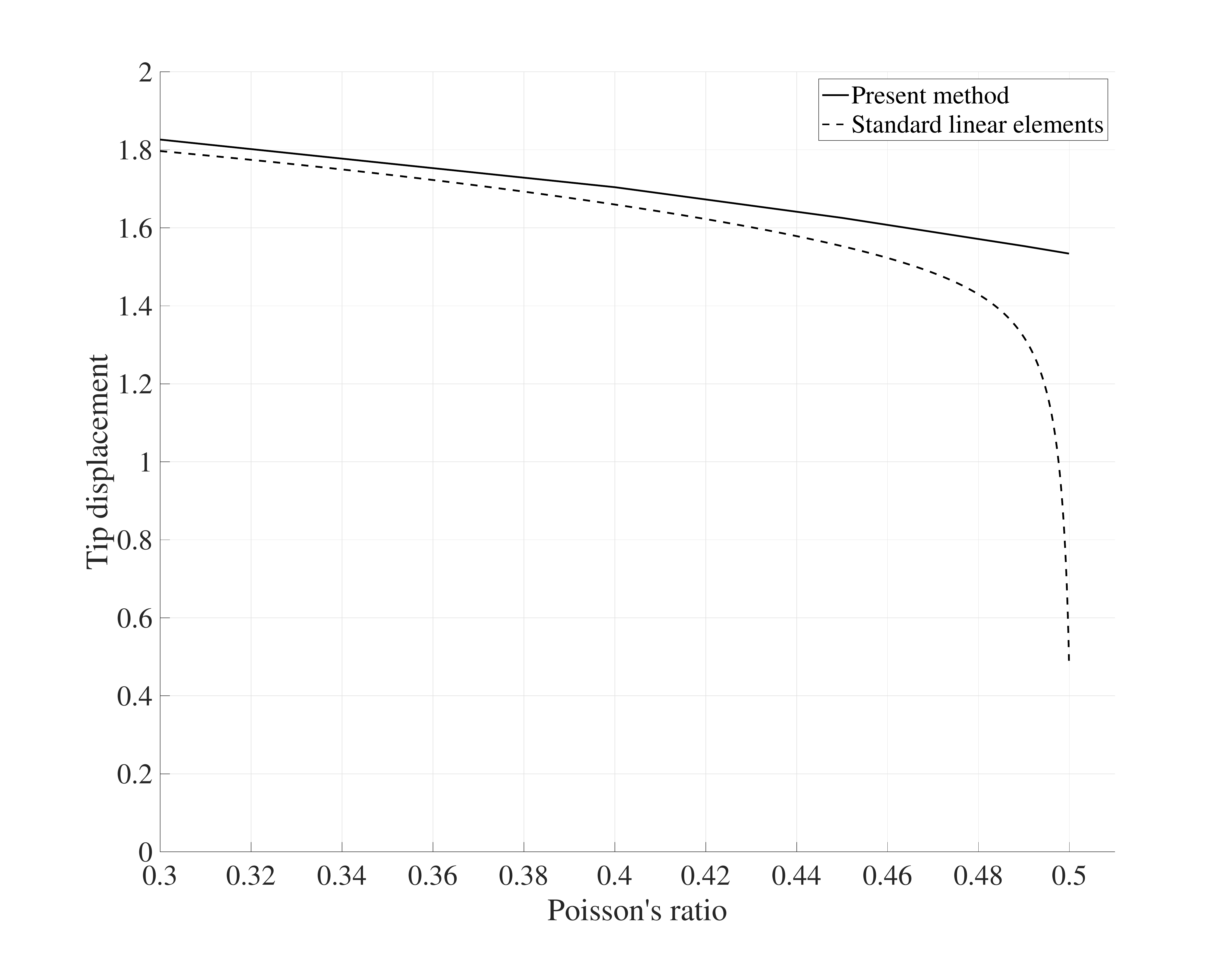}
\end{center}
\caption{Locking with standard linear elements and locking free solution with the present approximation.\label{fig:locking}}
\end{figure}

\begin{figure}[h]
\begin{center}
\includegraphics[scale=0.25]{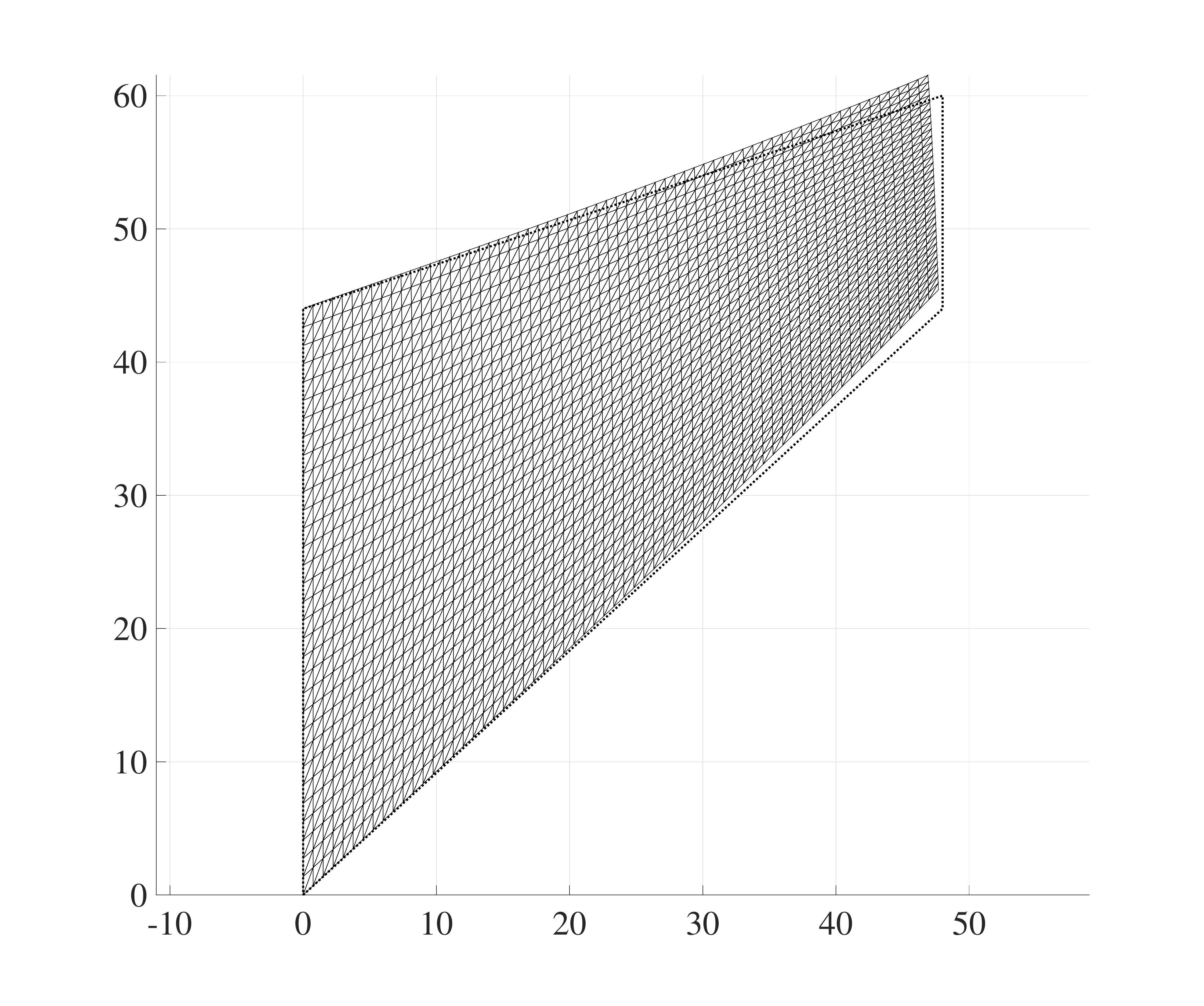}
\end{center}
\caption{Mesh and corresponding solution for $\nu=0.49999$.\label{fig:def}}
\end{figure}

\begin{figure}[h]
\begin{center}
\includegraphics[scale=0.2]{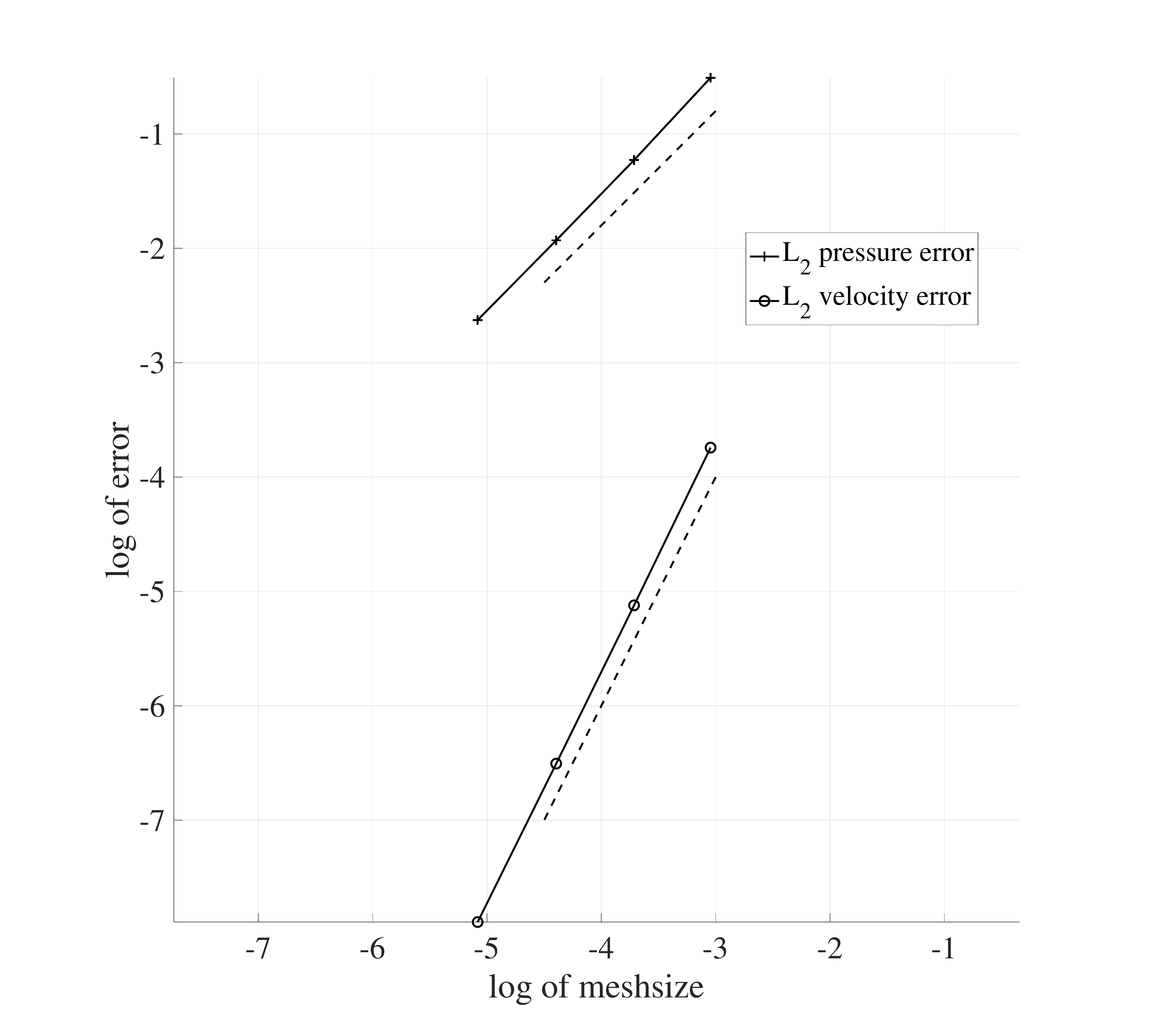}
\end{center}
\caption{Convergence for a Stokes problem on a sequence of meshes.\label{fig:convstokes}}
\end{figure}

\begin{figure}[h]
\begin{center}
\includegraphics[scale=0.2]{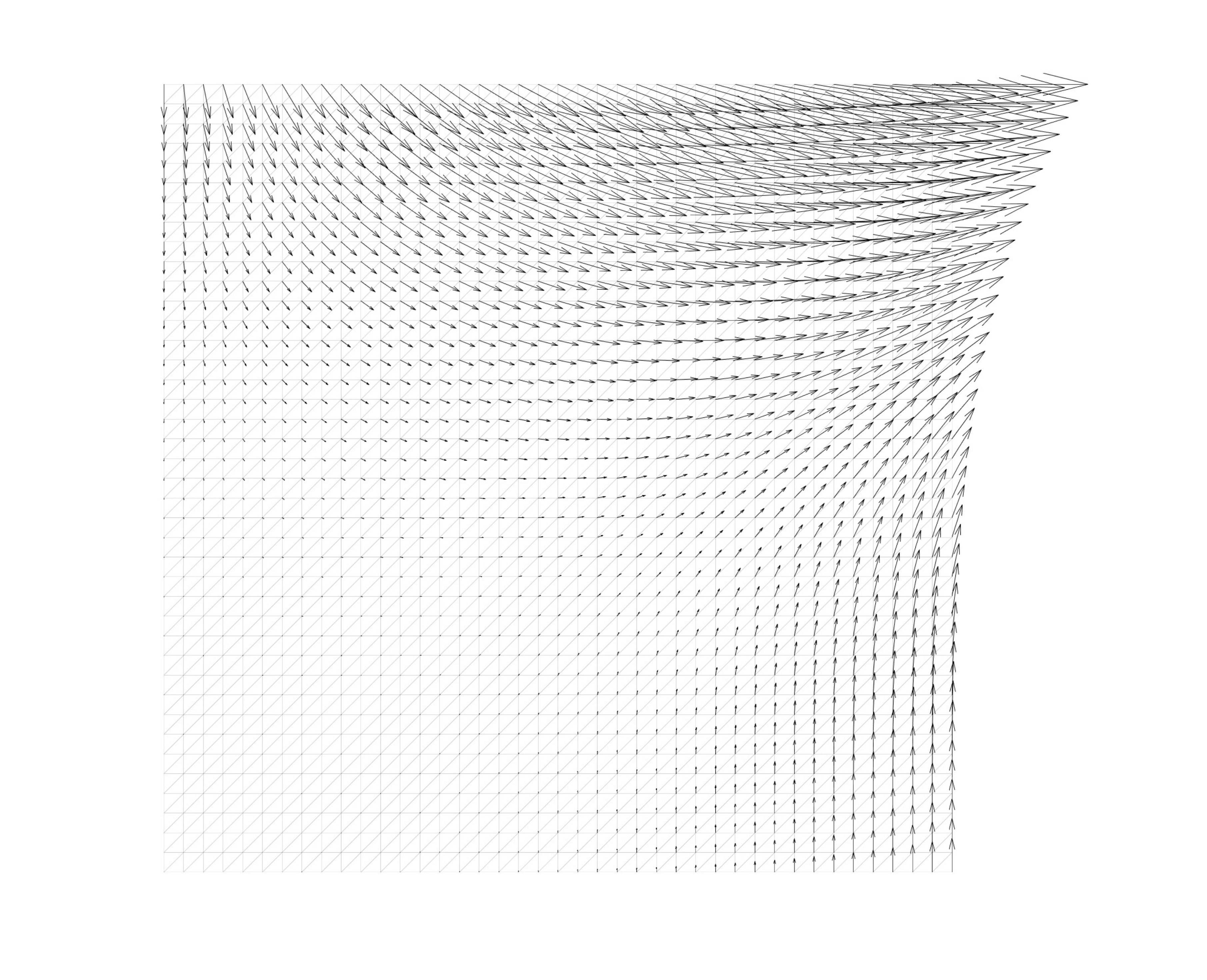}\includegraphics[scale=0.15]{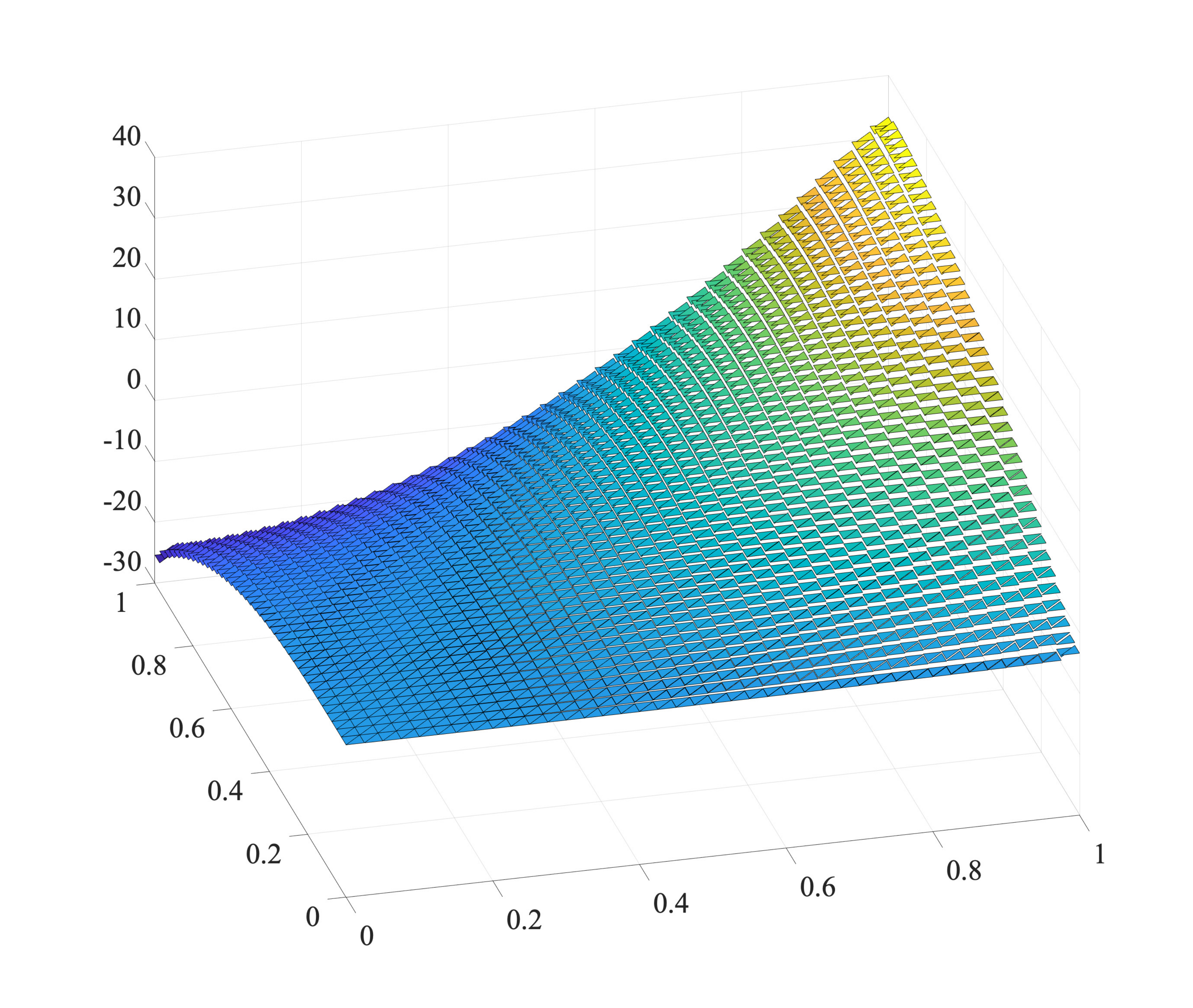}
\end{center}
\caption{Velocity and pressure solutions on a mesh in the sequence.\label{fig:solstokes}}
\end{figure}

\begin{figure}[h]
\begin{center}
\includegraphics[scale=0.2]{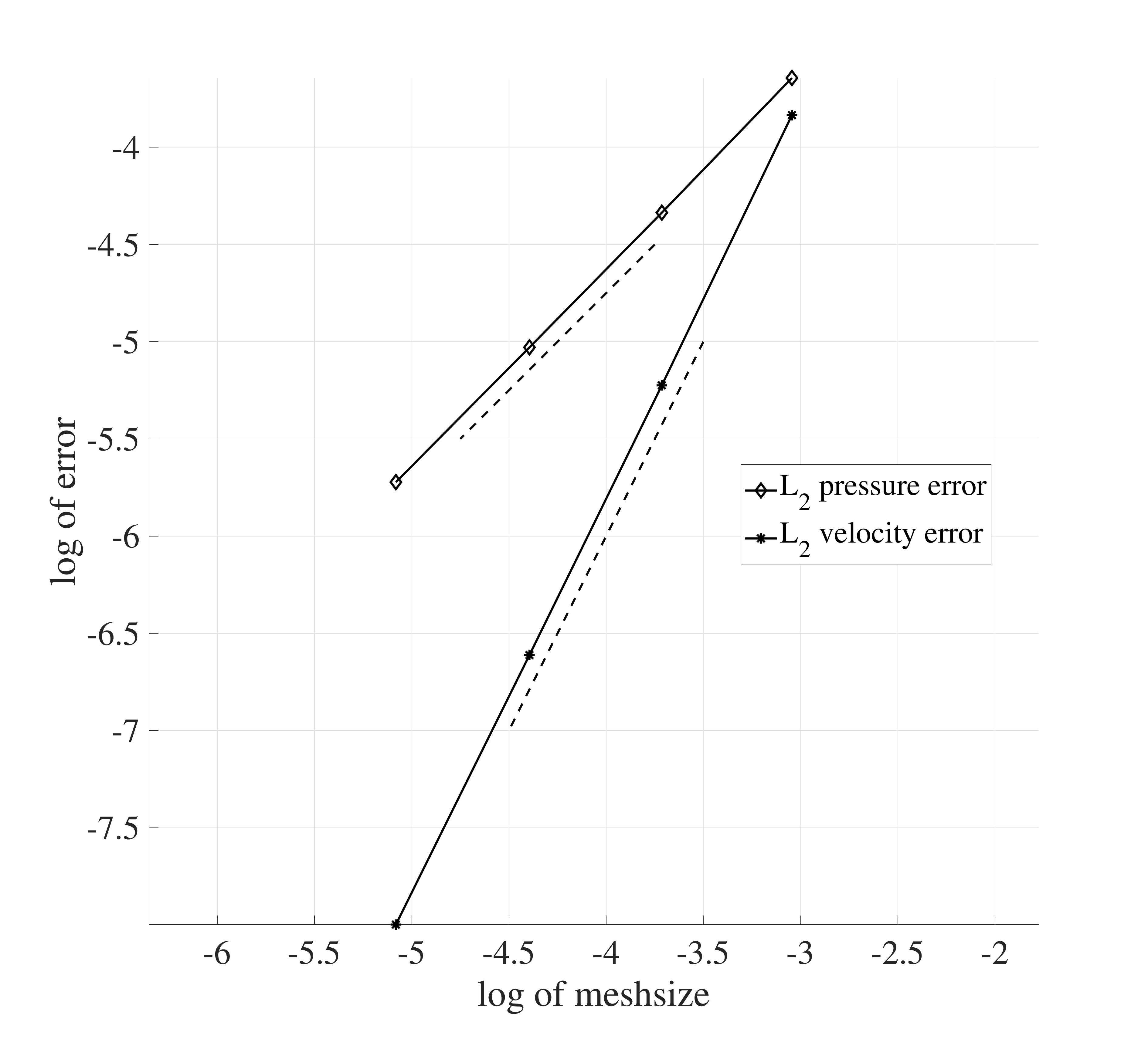}
\end{center}
\caption{Convergence for a Darcy problem on a sequence of meshes.\label{fig:convdarcy}}
\end{figure}
\begin{figure}[h]
\begin{center}
\includegraphics[scale=0.2]{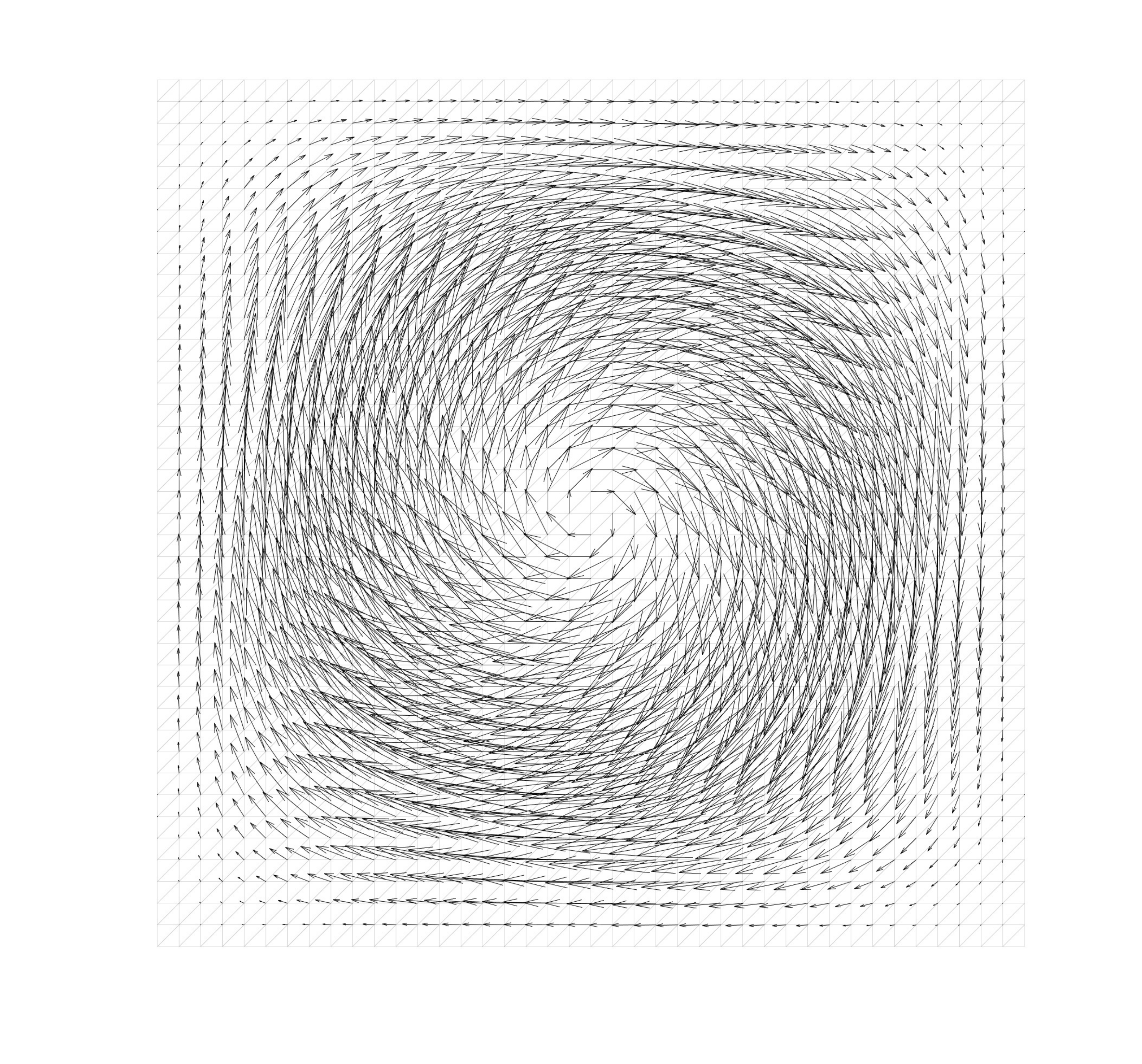}\includegraphics[scale=0.2]{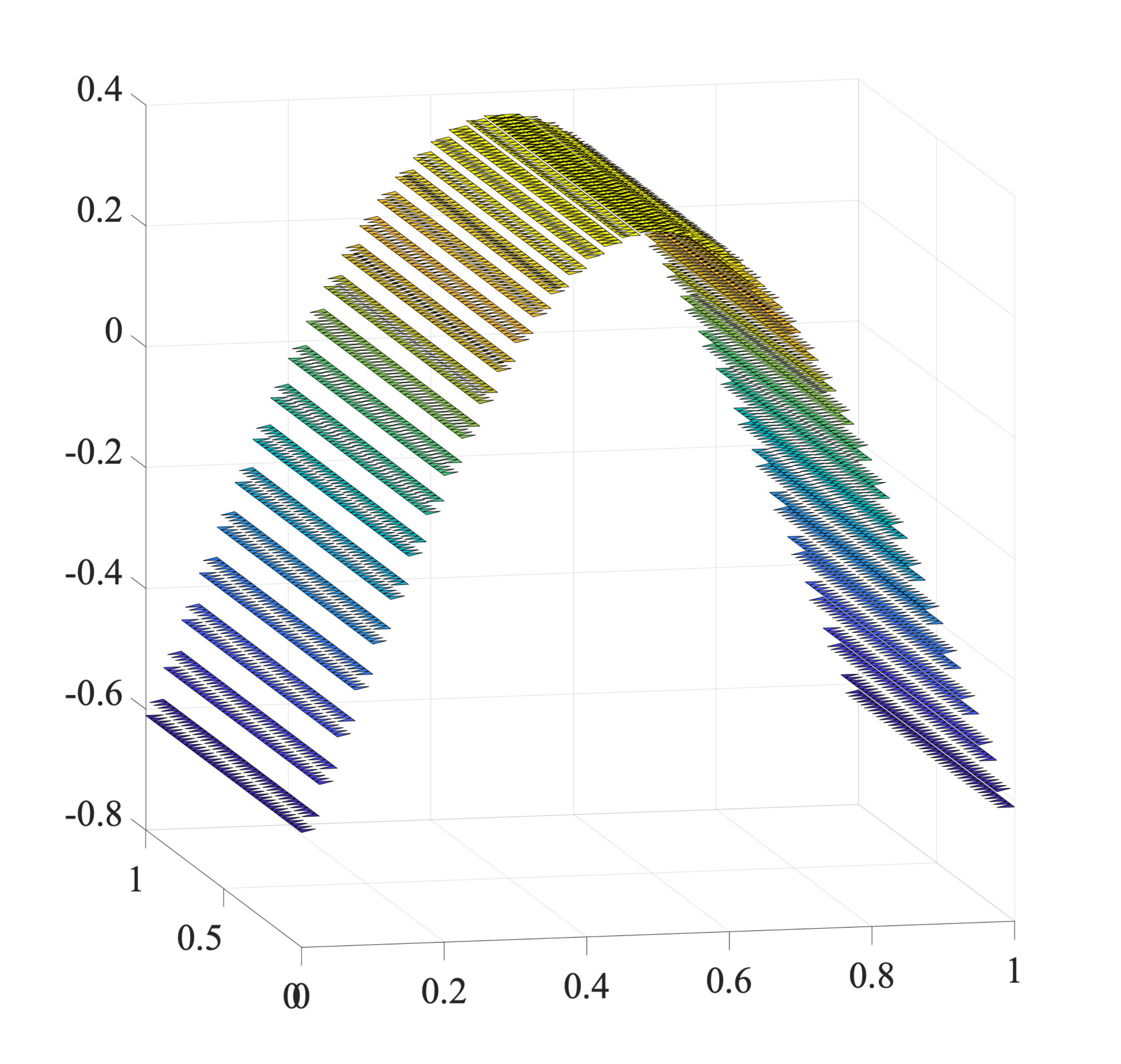}
\end{center}
\caption{Velocity and pressure solutions on a mesh in the sequence.\label{fig:soldarcy}}
\end{figure}
\begin{figure}[h]
\begin{center}
\includegraphics[scale=0.33]{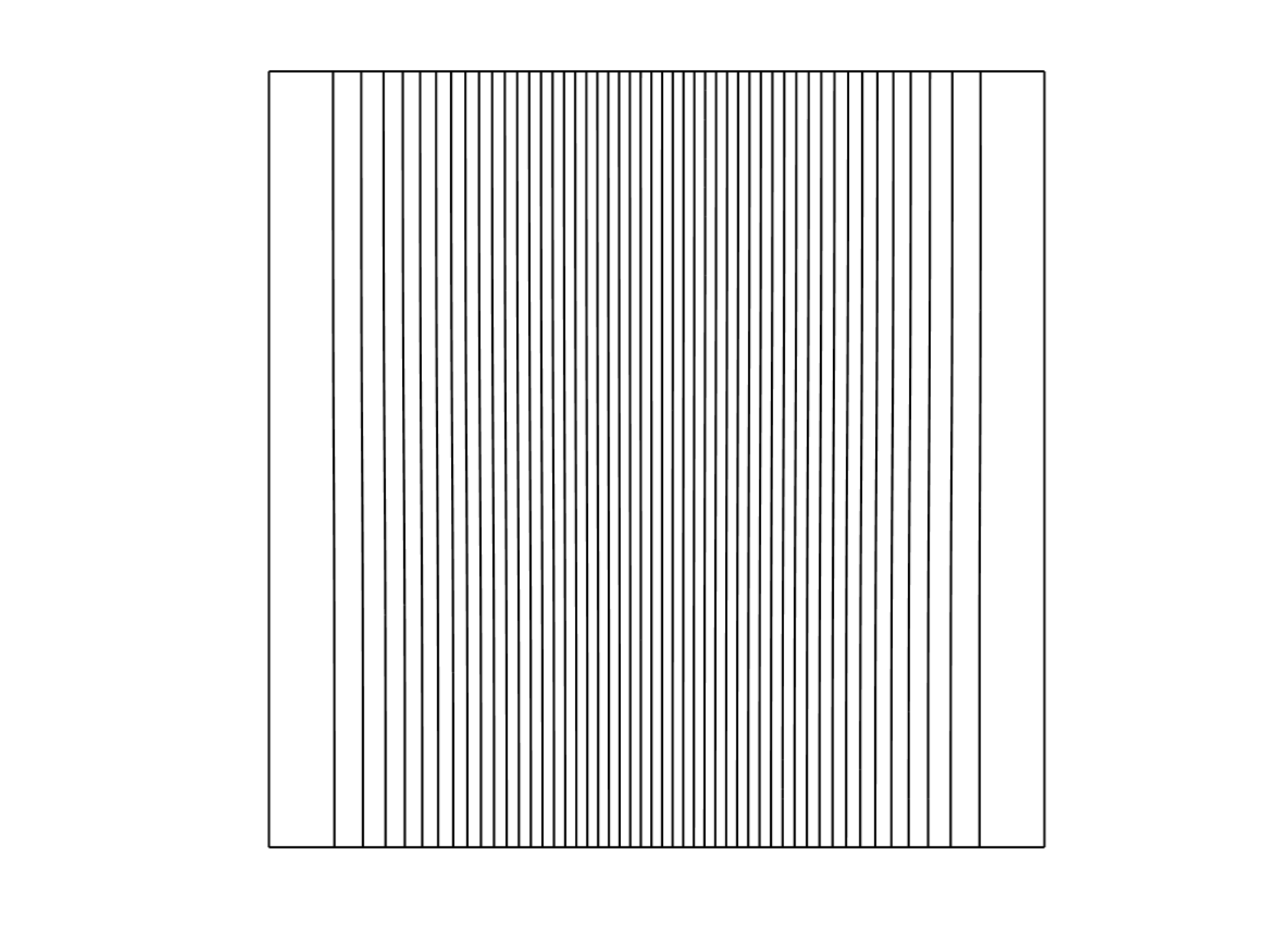}
\includegraphics[scale=0.33]{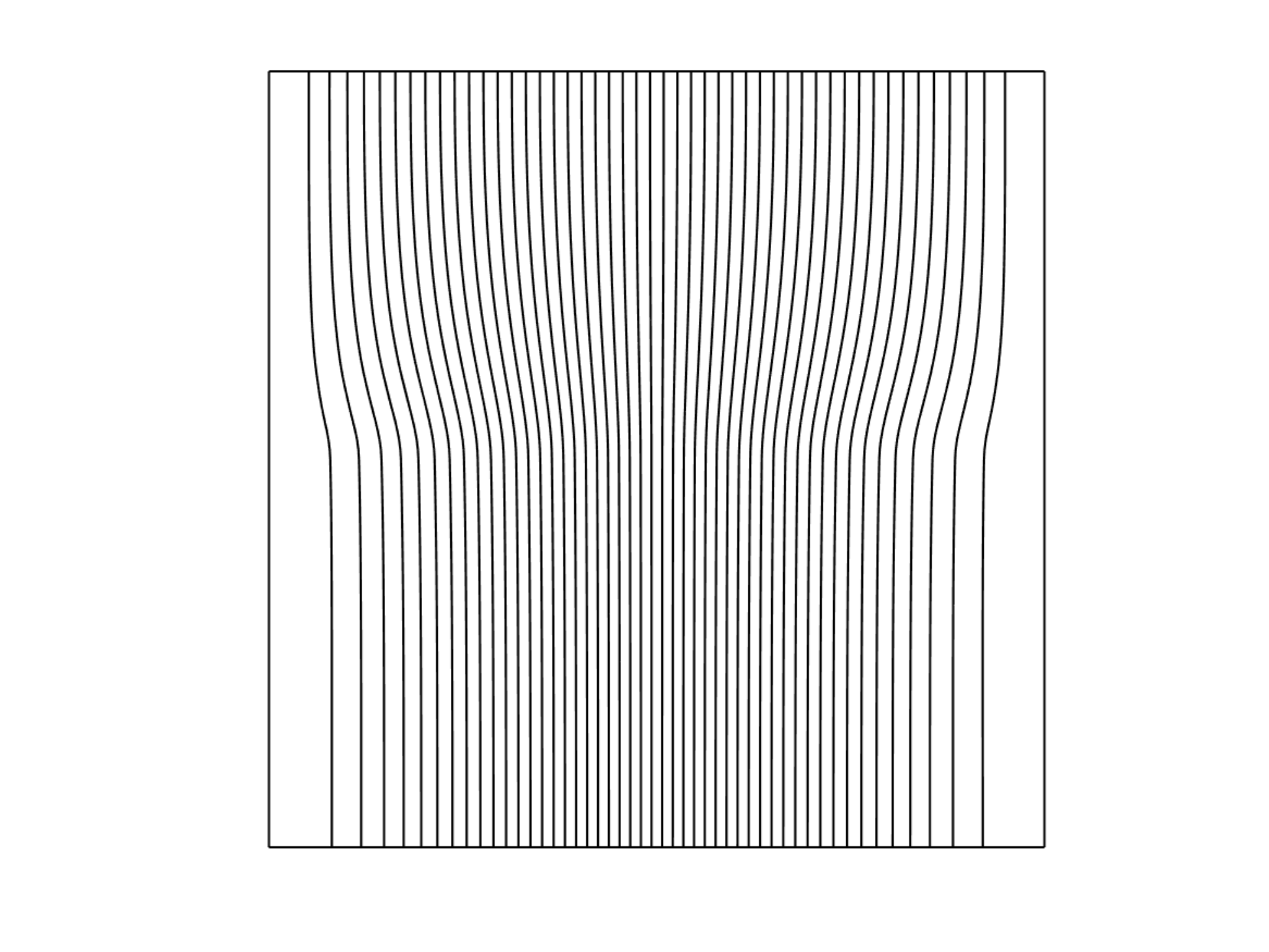}
\end{center}
\caption{Streamlines for $\mu = 1$ and $\mu = 10^{-2}$.\label{normal1}}
\end{figure}
\begin{figure}[h]
\begin{center}
\includegraphics[scale=0.33]{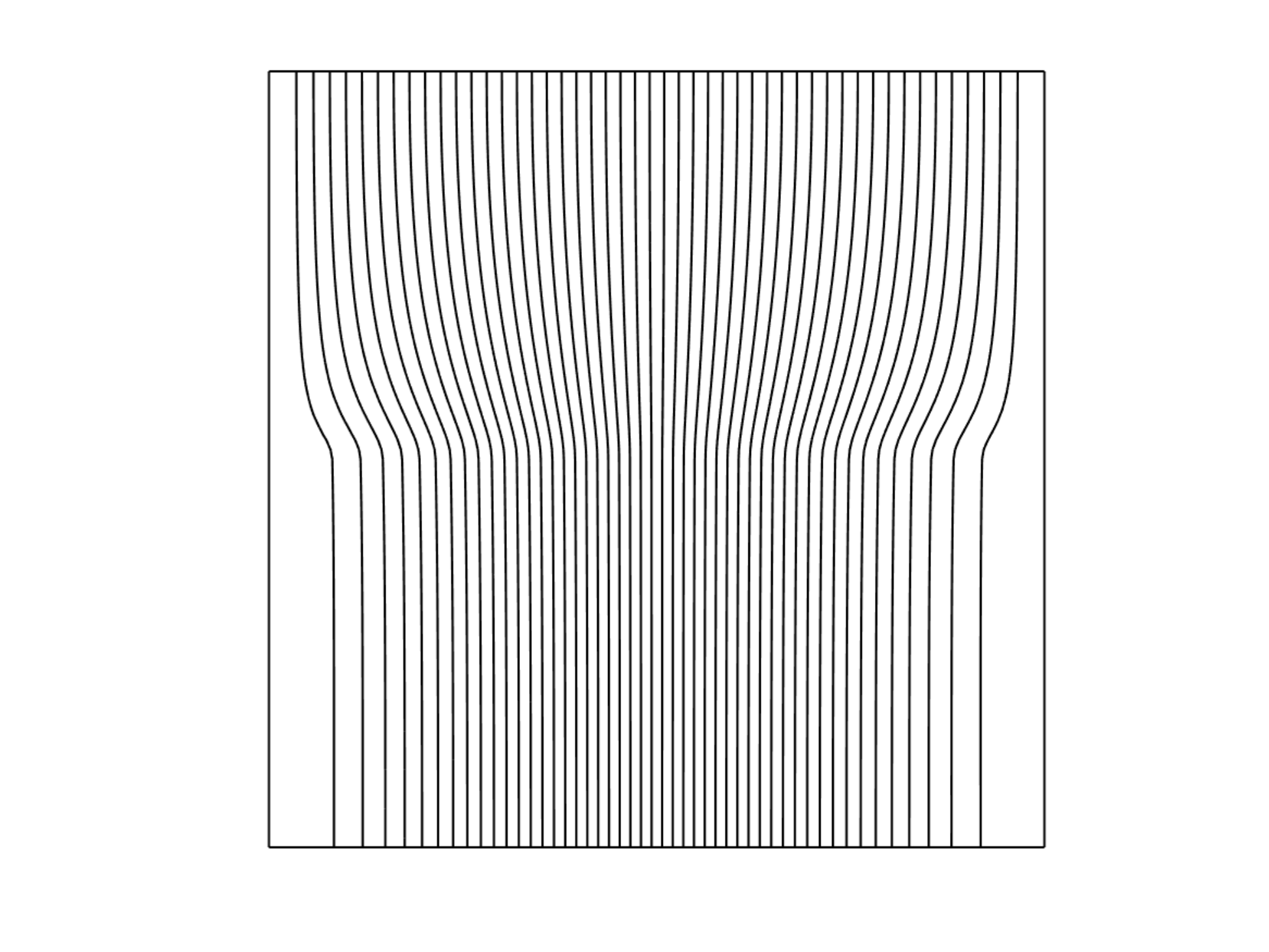}\includegraphics[scale=0.33]{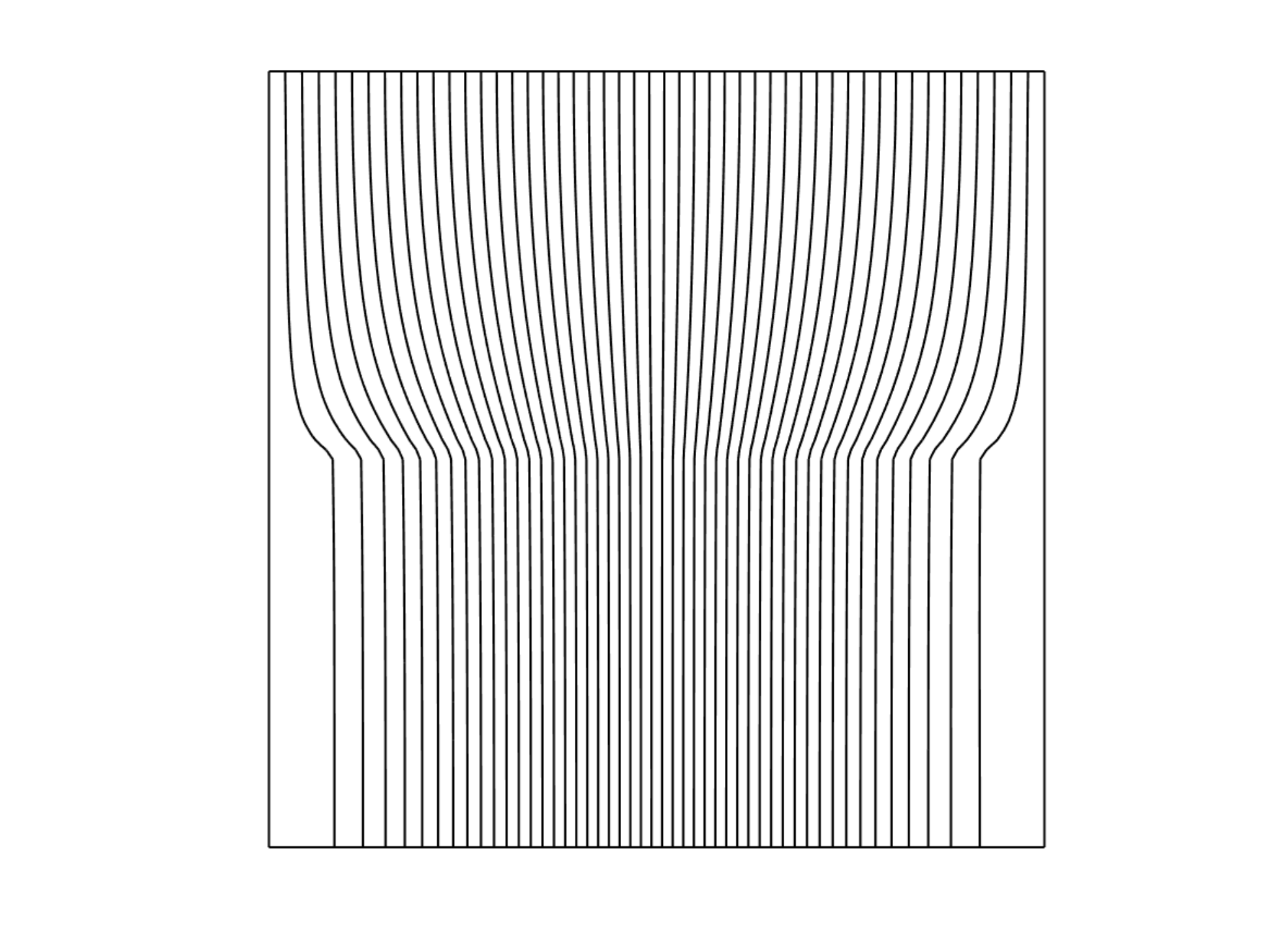}
\end{center}
\caption{Streamlines for $\mu = 10^{-3}$ and $\mu = 10^{-6}$.\label{normal2}}
\end{figure}
\begin{figure}[h]
\begin{center}
\includegraphics[scale=0.17]{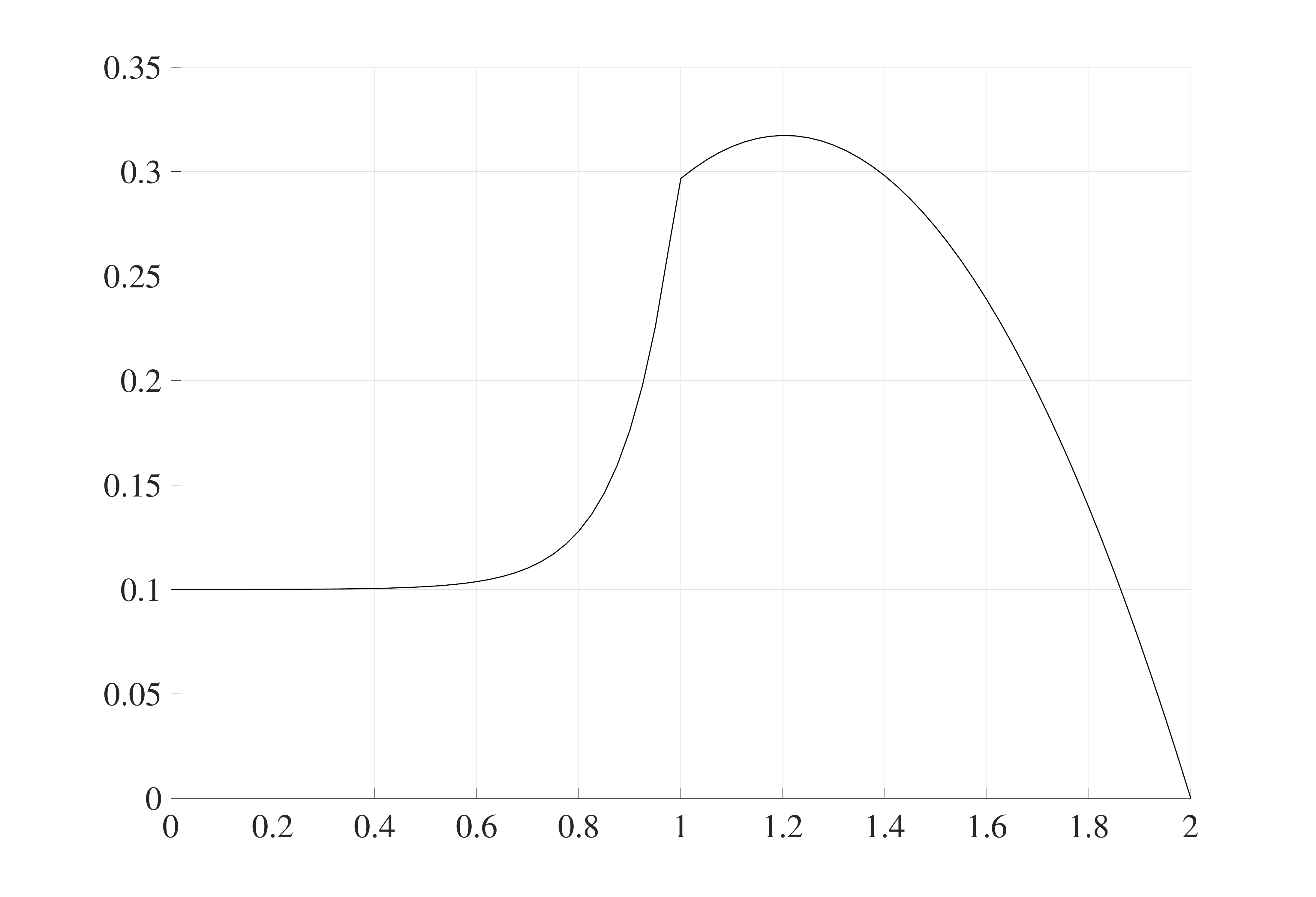}\includegraphics[scale=0.17]{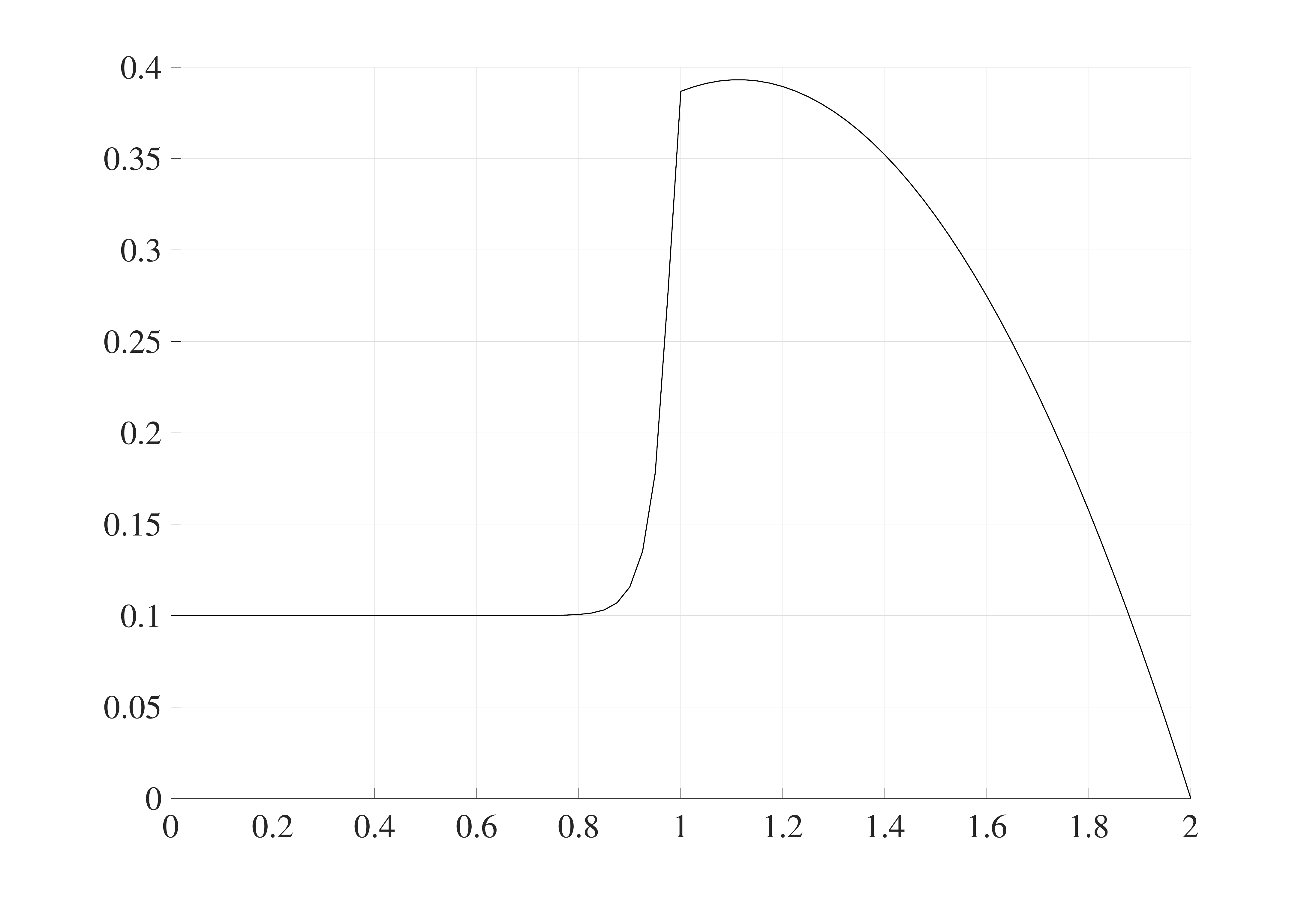}
\end{center}
\caption{Velocity profiles for $\mu = 10$ and $\mu = 1$.\label{profile1}}
\end{figure}
\begin{figure}[h]
\begin{center}
\includegraphics[scale=0.17]{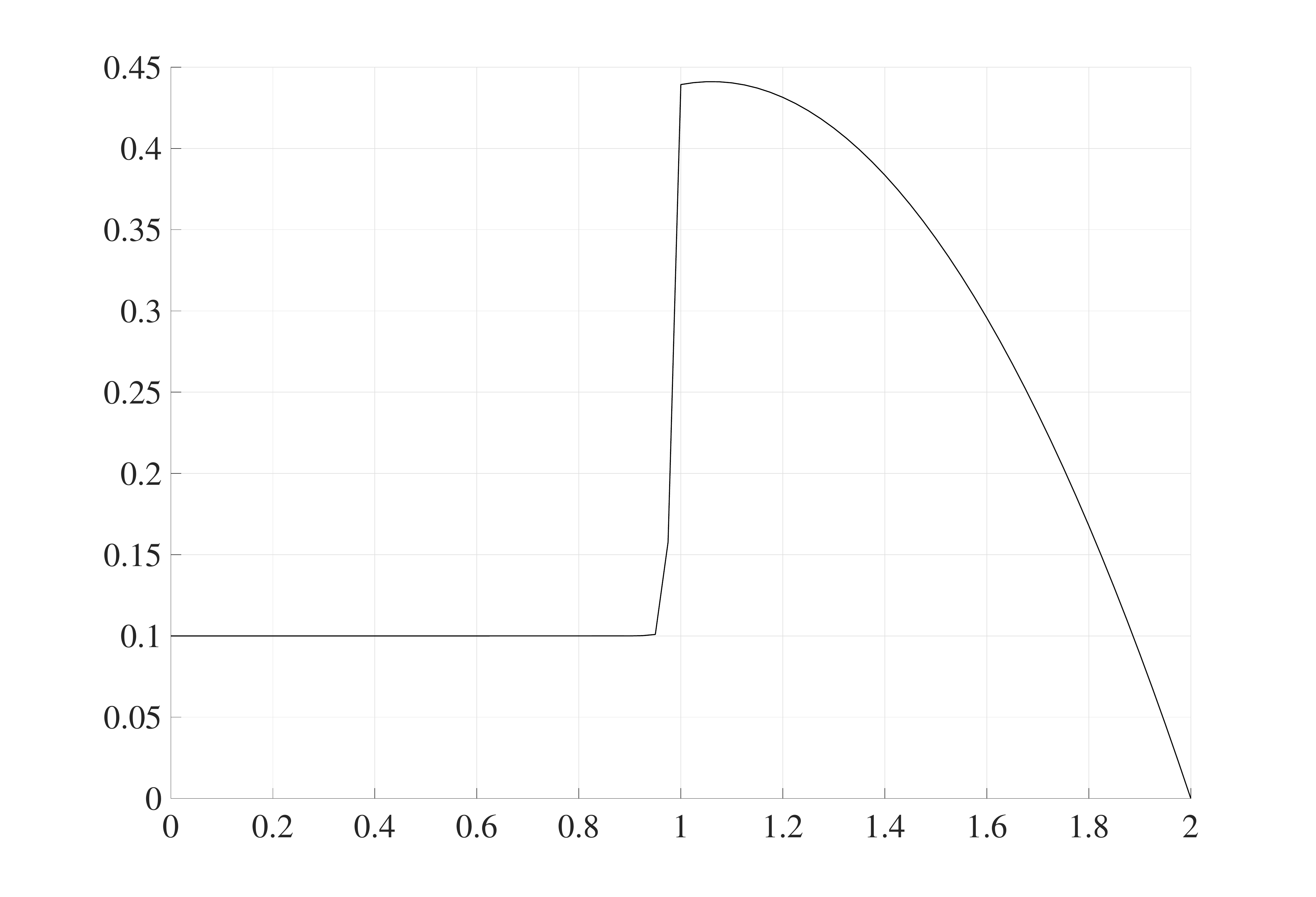}\includegraphics[scale=0.17]{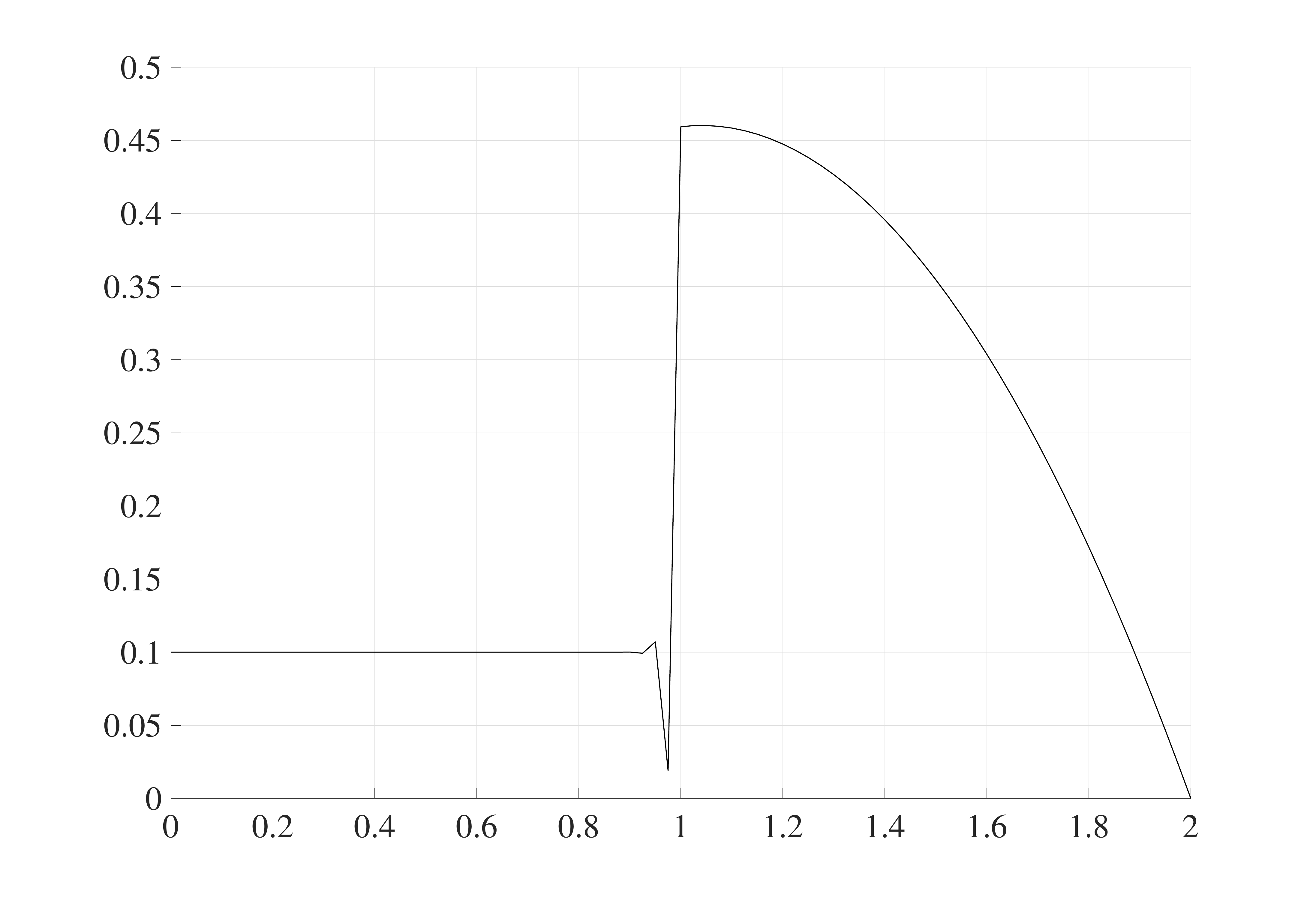}
\end{center}
\caption{Velocity profiles for $\mu = 10^{-1}$ and $\mu = 10^{-2}$.\label{profile2}}
\end{figure}

\end{document}